\documentclass[11pt]{amsart}

\usepackage{geometry}                
\usepackage{amssymb}
\usepackage{epstopdf}
\usepackage{ifthen}
\usepackage{xcolor}
\usepackage{cite}
\usepackage{hyperref}
\usepackage{graphicx}
\graphicspath{{./gfx/}}

\newtheorem{question}{Question}
\newtheorem{lemma}{Lemma}
\newtheorem{theorem}[lemma]{Theorem}

\newtheorem{proposition}[lemma]{Proposition}
\theoremstyle{definition}
\newtheorem{remark}[lemma]{Remark}
\newtheorem{definition}[lemma]{Definition}
\newtheorem{example}[lemma]{Example}
\newtheorem*{example*}{Example}

\newcommand{\exclude}[1]{}

\definecolor{grey}{rgb}{0.7, 0.7, 0.7}
\definecolor{darkgreen}{rgb}{0., 0.6, 0.}

\ifthenelse{\isundefined{\draft}}{%
	\newcommand{\obsolete}[1]{}
	\newcommand{\detail}[1]{}
	\newcommand{\new}[1]{#1}
	\newenvironment{newenv}{}{}
	\newcommand{\todo}[1]{}
}{
	\newcommand{\obsolete}[1]{{\color{grey}\renewcommand{\color}[1]{} #1}}
	\newcommand{\detail}[1]{{\color{purple}\scriptsize  #1 }}

	\newcommand{\new}[1]{{\color{darkgreen}#1}}
	\newenvironment{newenv}{\color{darkgreen}}{}
	\newcommand{\todo}[1]{{\color{blue}TODO: #1}}
}

\newcommand{\E}{{\mathbb{E}}}
\newcommand{\N}{{\mathbb{N}}}
\renewcommand{\P}{{\mathbb{P}}}
\newcommand{\R}{{\mathbb{R}}}
\newcommand{\V}{{\mathbb{V}}}
\newcommand{\Beta}{\operatorname{B}}
\newcommand{\median}{\operatorname{median}}

\title{A dice game, a multinomial walk, and the inverted Dirichlet distribution}
\author{Gunther Leobacher and Alexander Steinicke}

\date{}                                           

\begin{document}
\maketitle

\begin{abstract}
We consider a simple dice game, which leads to an intriguing study of multinomial walks, with surprising and seemingly paradoxical
properties.
The winning and losing probabilities of a general version of the game are investigated via conjugacy relations between Gamma and Poisson distributions, 
as well as between  negative multinomial and inverted Dirichlet distributions.
We show a monotonicity property of the regularized beta function, which implies a monotonicity property of the winning probability.
Furthermore, the asymptotic behavior of the game for one or several parameters of the game tending to infinity is analyzed, as well as the probability of being last in the game. 
\end{abstract}

\todo{
\begin{itemize}
\item Find appropriate title
\begin{itemize}
\item An advertisement for the inverted Dirichlet distribution
\item Multinomial walks and the inverted Dirichlet distribution
\item Are inverted Dirichlet distributions a child's game?
\item A dice game, a paradoxon, and the inverted Dirichlet distribution
\item A dice game, a multinomial walk, and the inverted Dirichlet distribution
\item A paradoxic dice game given by a random walk
\end{itemize}
\end{itemize}

}

\noindent Keywords: 	Incomplete beta function, inverted Dirichlet distribution, random walk on the integer lattice, multinomial walk \\
\noindent MSC2020: 33B20, 
91A60,  
60E05, 
60G50 

\ifthenelse{\isundefined{\draft}}{%
}
{
\tableofcontents
}

\section{Introduction}\label{sec:introduction}

Consider the following game: There are 11 players numbered from \(1\) to \(11\), and each player \(\ell\) is assigned a goal \(n_\ell:=\min(\ell,12-\ell)\). In each round of the game, two dice are thrown. Denote the sum of the two dice 
in round \(k\) by \(X_k\). Then in round \(k\), player number \(X_k-1\) moves 1 step forward. The first player to reach their goal of \(n_\ell\)
steps is the winner. See Figure \ref{fig:board} for the game board and Table \ref{tbl:adv-probs} for the 
probability for every player to advance in a given round of the game, which equals \(\frac{1}{n_\ell}\) for player \(\ell\).  
 
\begin{figure}[h]
\includegraphics[width=9cm]{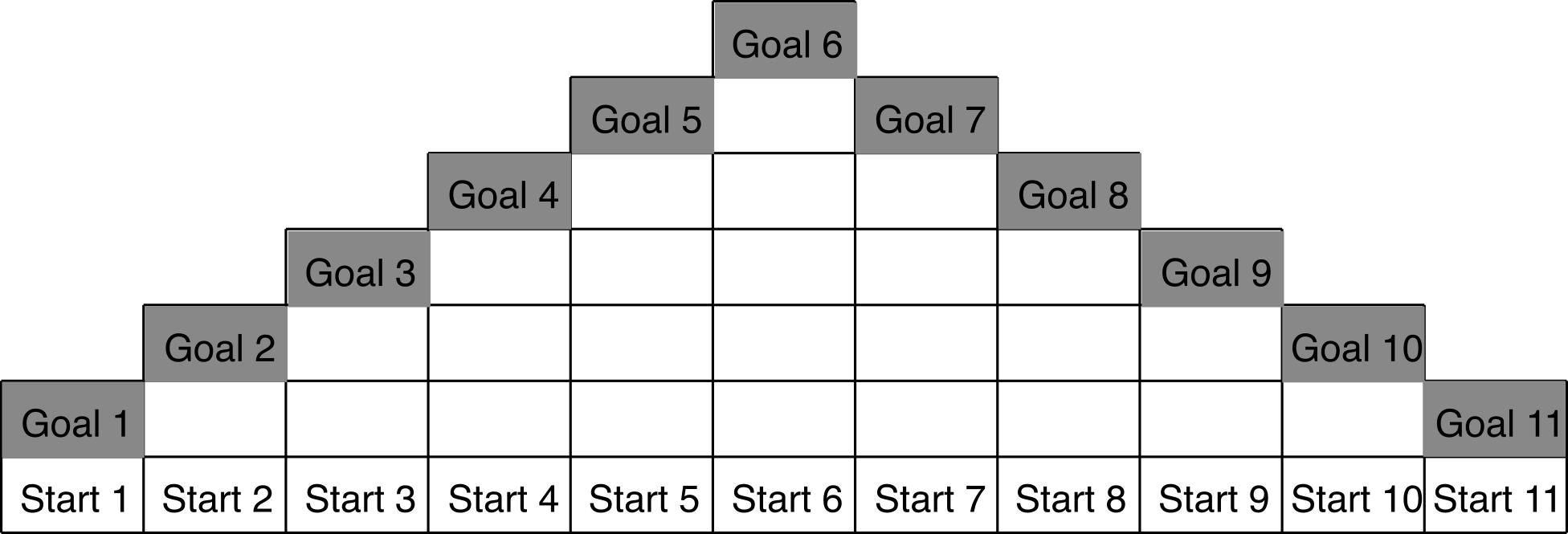}
\caption{The game board}\label{fig:board}
\end{figure}

\begin{table}[h]
\begin{tabular}{l{|}ccccccccccc}
Player number&1&2&3&4&5&6&7&8&9&10&11\\ \hline
Steps needed to win & 1&2&3&4&5&6&5&4&3&2&1\\\hline
Probability of a step in each round & \(\frac{1}{36}\)& \(\frac{2}{36}\)& \(\frac{3}{36}\)& \(\frac{4}{36}\)& \(\frac{5}{36}\)& \(\frac{6}{36}\)& \(\frac{5}{36}\)& \(\frac{4}{36}\)& \(\frac{3}{36}\)& \(\frac{2}{36}\)& \(\frac{1}{36}\)
\end{tabular}
\medskip 
\caption{The probabilities for each player to advance}\label{tbl:adv-probs}
\end{table}

The question is: Which 
of the players is/are most likely to win?

For this, let \(N^\ell\) denote the number of rounds played until player \(\ell\)  reaches their goal.
Then \(N^\ell-n_\ell\) follows a negative binomial distribution with parameters 
\(n_\ell\) and  \(\frac{n_\ell}{36}\), since it is the number
of failures before the \(n_\ell\)-th success with the probability of success equal to \(\frac{n_\ell}{36}\):
\[
\P(N^\ell=n_\ell+j)={n_\ell+j-1 \choose n_\ell-1} \Big(\frac{n_\ell}{36}\Big)^{n_\ell}\Big(1-\frac{n_\ell}{36}\Big)^{j}\,,\quad(j\ge 0).
\]
Thus the expected number of rounds needed for player \(\ell\)  to reach their goal is
\[
\E(N^\ell)=\frac{n_\ell}{\frac{n_\ell}{36}}=36\,.
\]
This means, the expected number of rounds played until they reach their respective goals is the same for every player, and one might be tempted 
to conjecture that also the probability of winning the game is the same for every player. That  this is not the case was
discovered by Evita Lerchenberger and Christoph Oberbucher \cite{LeOb26}. In fact, the probabilities of winning, rounded to two decimals, are given 
in Table \ref{tbl:dice-prob}, and they show that the combined probability of one of the players 1 or 11 winning is almost \(\frac 1 2\).

\begin{table}[h]
\begin{tabular}{l{|}ccccccccccc}
Player number&1&2&3&4&5&6\\ \hline
Steps needed to win & 1&2&3&4&5&6\\ \hline
Probability of step & 0.028 & 0.055 & 0.083 & 0.11 & 0.139 & 0.167                                      \\ \hline
Probability to win &0.242&0.111&0.065&0.042&0.030&0.022
\end{tabular}
\smallskip
\caption{The probabilities for each of players 1,\ldots,6 to reach their goal first. The probability for player \(\ell\) with \(\ell>6\) is the same as that for player \(12-\ell\).}\label{tbl:dice-prob}
\end{table}

That is, the players with smaller goals are significantly more likely to win -- raising the question whether this holds for every
game of a similar structure.  Computing the winning probabilities in Table \ref{tbl:dice-prob} is not hard: One can model the game
as a random walk on \(\N^{11}\), where the transition probability from state \(\xi=(\xi_1,\ldots, \xi_{11})\) to \(\eta=(\eta_1,\ldots, \eta_{11})\)
is given by 
\[
p_{\xi,\eta}=
\begin{cases}
 \frac{n_\ell}{36} & \text{if  }\eta=\xi+e_\ell\\
 0 & \text{else}
\end{cases}
\]
(here \(e_\ell\in \N^{11}\) denotes the \(\ell\)-th canonical basis vector. Also, we use the convention \(0\in \N\) throughout the paper)
and the probability of player \(\ell\) winning is equal to the probability that the \(\ell\)-th coordinate becomes \(n_\ell\) first, which can be 
computed by recursion. In fact, we will write down the winning probability of a player concisely with the help of the negative multinomial distribution in Section \ref{sec:gprob}. However, the numerical evaluation of this formula is barely more efficient than the computation of the recursion.  
Lemmas \ref{lem:gammarep} and \ref{lem:conjugate}, also presented in Section \ref{sec:gprob}, lead to a much more efficient method that employs numerical integration
of gamma distribution densities.

\medskip

For the formulation of our main question and theorem, we generalize the earlier game in the following way: 
We have $m$ players, where each needs $n_\ell\in \N_{\ge 1},\, 1\leq \ell\leq m$, steps to reach their respective goal and advances towards 
that goal by one step with probability $p_\ell=\frac{n_\ell}{n_1+\ldots+n_m}$, $1\leq \ell\leq m$ (so $p_1+\dotsb+p_m=1$) in every round. 

\begin{question}\label{question1}
Is it true that the players with the smallest goals (and the smallest probabilities for advancing) have the highest probability to reach their goal before
all others?  
\end{question}

It is not easy to get an intuition for this situation. For example, the question cannot be answered simply by studying the expectations and variances involved. In fact, it is not hard to find nonnegative random random variables \(X,Y\) with either of the following properties: 
\begin{itemize}
\item \(\E(X)\le\E(Y)\) and \(\V(X)\le\V(Y)\)    or
\item  \(\E(X)>\E(Y)\) and \(\V(X)>\V(Y)\)  or
\item  \(\E(X)\le\E(Y)\) and \(\V(X)>\V(Y)\) or
\item  \(\E(X)>\E(Y)\) and \(\V(X)\le\V(Y)\) 
\end{itemize}
and \(\P(X < Y)>\frac 1 2\).

\detail{

\begin{example*}
We set $\P(S=1 \land T=2)=\frac{2}{3}=1-\P(S=7 \land T=5)$. Then, $\E(S)=\frac{2}{3}\cdot 1+ \frac{1}{3}\cdot 7=3$ and $\E(T)=\frac{2}{3}\cdot 2+ \frac{1}{3}\cdot 5=3$, but precisely $\P(S\le T)=\frac{2}{3}$. 
\end{example*}

Note that $S$ has a larger variance than $T$. The next counterexample shows, however, that one cannot conclude $\P(S\le T)\ge \frac{1}{2}$ even from $\E(S)<\E(T)$ and $\V(S)>\V(T)$.

\begin{example*}
We set $\P(S=2 \land T=1.9)=\frac{1}{3}$, $\P(S=4 \land T=4.1)=\frac{1}{3}$, and $\P(S=6 \land T=6)=\frac{1}{3}$. Then, $\E(S)=\E(T)=4$, $\V(S)\approx 2.67<2.81\approx\V(T)$, but $\P(S\le T)=\frac{2}{3}$.
\end{example*}

Does anything change if we require stochastic independence of the times? The next example shows that this is not the case.

\begin{example*}
Let $S=\sqrt{3}-1+S'$, where $S'\sim \exp(1)$ and $T\sim U([0,\sqrt{12}])$. Then, one can easily calculate that $\E(S)=\E(T)=\sqrt{3}$ and $\V(S)=\V(T)=1$. Obviously, $S$ and $T$ are both non-negative. Now
\begin{align*}
\P(S\le T)&=\int_0^\infty \int_0^\infty 1_{[0,t]}(s)f_S(s)f_T(t)ds\,dt\\
&=\int_0^\infty \int_0^\infty 1_{[0,t]}(s)1_{[\sqrt 3-1,\infty)}(s)e^{\sqrt 3-1-s}1_{[0,\sqrt {12}]}(t)\frac{1}{\sqrt {12}}ds\,dt\\
&=\frac{1}{\sqrt {12}}\int_{\sqrt 3-1}^{\sqrt {12}} \int_{\sqrt 3-1}^t  e^{\sqrt 3-1-s} ds\,dt
=\frac{1}{\sqrt {12}}\int_{\sqrt 3-1}^{\sqrt {12}}  1-e^{\sqrt 3-1-t} dt\\
&=\frac{1}{\sqrt {12}}\int_{\sqrt 3-1}^{\sqrt {12}} 1 dt-\frac{1}{\sqrt {12}}\int_{\sqrt 3-1}^{\sqrt {12}}  e^{\sqrt 3-1-t} dt\\
&=\frac{12-(\sqrt 3-1)^2}{2\sqrt {12}}-1+e^{\sqrt 3-1-\sqrt {12}}\approx 0.72\,,
\end{align*} 
which is comfortably larger than $\frac{1}{2}$. We can thus add a small constant $a$ to $S$ to make $\E(a+S)$ a little smaller or larger than $\E(T)$, or multiply $T$ by $1+b$ for small $b$ to make $\V(bT)$ larger or smaller than $\V(S)$, and still have $\P(S\le T)>\frac{1}{2}$.
\end{example*}

In summary, the following conditions are all independent:
\begin{itemize}
\item \(\P(S\le T)>\frac 1 2\)
\item \(\E(S)<\E(T)\)
\item \(\V(S)<\V(T)\)
\end{itemize}
}
Evita Lerchenberger and Christoph Oberbucher \cite{LeOb26} also observed the following `paradox' 
--- conjectured by Mark Lawson (personal communication with E.~Lerchenberger) --- regarding the probabilities for each player to reach their goal  last
(see Table \ref{tbl:dice-prob-last}).

\medskip 
\begin{table}[h]
\begin{tabular}{l{|}ccccccccccc}
Player number&1&2&3&4&5&6\\ \hline
Steps needed to win & 1&2&3&4&5&6\\ \hline
Probability of step & 0.028 & 0.055 & 0.083 & 0.11 & 0.139 & 0.167                                      \\ \hline
Probability to be last & 0.147 & 0.110 & 0.087 & 0.071 & 0.060  & 0.051
\end{tabular}
\smallskip
\caption{The probabilities for each players 1,\ldots,6 to reach their goal last. The probability for player \(\ell\) with \(\ell>6\) is the same as that for player \(12-\ell\).}\label{tbl:dice-prob-last}
\end{table}

One sees that, not only is the probability that players 1 and 11 reach their goals first higher than for all other players, 
also the probability of reaching it last is the highest for the same players! 
The combined probability for  one of these two players being  last, is almost 30\%.

\begin{question}\label{question2}
For \(m\ge 3\), is it true that the players with the smallest goals (and the smallest probabilities for advancing) have the highest probability of reaching their goal after
all others?  
\end{question}

The main motivation of this article is to answer Question \ref{question1} in the affirmative. In Section \ref{sec:twoplayers} we consider
the 2-player case, where we give different approaches for answering Question \ref{question1}, and we discuss which approach is 
promising for \(m\ge 3\). It turns out that one should look for a multidimensional version of the inverted beta distribution, and that the 
proper generalization for our purpose is the inverted Dirichlet distribution, which we recall in Section \ref{sec:gprob}. Furthermore, 
it is shown how to represent the winning probabilities in terms of this distribution, as well as by means of independent gamma- or Poisson 
random variables. One separate result of  Section \ref{sec:gprob} is Theorem \ref{thm:pi-bij}, which states that there is a (even diffeomorphic)
bijective correspondence between the advancing probabilities of the players and their respective probabilities of winning.

In Section \ref{sec:winning} we use the findings from Section \ref{sec:gprob} to answer Question \ref{question1}. For this, a further 
result is needed, Lemma \ref{th:inc-reg-beta-dec}, which is a monotonicity result for the incomplete regularized beta function.

Section \ref{sec:asymptotics} is devoted to the investigation of the limiting behavior of the game if one or several 
of the goals \(n_1,\ldots,n_m\) tend to infinity. There is also a surprise waiting there, namely that our game yields a `real world' 
example of a  sequence with index set \(\N_{\ge 1}^m\) where a different order in which limits are taken gives different results.

\medskip

It is obvious, that the assertion of Question \ref{question2} is false for \(m=2\), since in that case the probability of being last 
is just the probability of not being first.
In Section \ref{sec:losing-prob} we discuss Question \ref{question2} for \(m\ge 3\) in some detail and give more numerical examples. 
It will turn out that, in the form stated above, the answer to Question \ref{question2} is `no'.
However, we show that asymptotically, that is, for \(n_1,\ldots,n_m\) sufficiently large, the answer to Question \ref{question2} still is `yes'.

\medskip

We want to highlight the contributions within this article to several fields on mathematics.
Firstly, it is an intriguing contribution to the theory of random 
walks on the integer lattice, as famously pioneered by George Polya in \cite{Polya1921aa}. In contrast to the mentioned work, the random 
walk we consider here does not visit any state more often than once, so questions of recurrence are vacuous (However,
we do give a characterization for recurrence of a related walk in Remark \ref{rem:tildeR}, with recurrence for \(m\le 3\), corresponding to dimensions 1 and 2, and transience for \(m\ge 4\).).  On the
other hand, questions about hitting probabilities of (parts of) hyperplanes are also interesting in this setup.

Second, our computation of the winning/losing probabilities via independent gamma random variables gives rise to a 
numerical method for computing the hitting probabilities above, which 
is so immensely faster than the straightforward recursive calculation, that it merits being reported to a wider public.

Third, our calculations brought about a useful result on incomplete beta functions that seems to have escaped the mathematical
community so far.

\section{Two players}\label{sec:twoplayers}

Before tackling the general problem, we solve the special case $m=2$, so $p_1=\frac{n_1}{n_1+n_2}$ and $p_2=\frac{n_2}{n_1+n_2}$. 

In fact, we will give 
three proofs that player 1 wins with probability greater than \(\frac 1 2\) iff \(n_1<n_2\). Note that it follows by symmetry 
that the probability of player 1 winning is \(\frac 1 2\) if \(n_1=n_2\).

For the first proof, consider the random walk on \(\N^2\) which moves one step to the right with probability \(p_1\) and one step up 
with probability \(p_2\) in every round. The probability for player 1 winning equals the one of this random walk reaching the set \(\{(x,y): x\ge n_1\}\) before the set 
 \(\{(x,y): y\ge n_2\}\). This in turn is the same as that of the walk crossing the line \(\{(x,y): x+y=n_1+n_2-1\}\) in a point \((j,k)\) with
 \(0\le k<n_2\) (see Figure \ref{fig:m_eq_2}). The walk needs precisely \(n_1+n_2-1\) steps to cross that line, and hitting it in the point \((j,k)\) means that 
 the walk has made precisely \(j\) steps to the right and \(k\) steps up.  Therefore, the probability of player 1 winning is
 \[
 \pi_1(n_1,n_2):=\sum_{k=0}^{n_2-1}{n_1+n_2-1 \choose k}p_2^k\ p_1^{n_1+n_2-1-k},
 \]
 the probability that a random variable \(X\), having binomial distribution with parameters \(n_1+n_2-1\) and \(p_2\),  is smaller than \(n_2\).
 It is known that the median of \(X\) is unique if \(p_2\) is rational, \cite[Theorem 4.1]{Nowakowski2021}, and satisfies 
  \(\lfloor(n_1+n_2-1)p_2)\rfloor\le \operatorname{median}(X)\le \lceil(n_1+n_2-1)p_2)\rceil\) \cite[Corollary 1]{KaasBuhr}. In our case, where \(p_2=\frac{n_2}{n_1+n_2}\) this means that \(\operatorname{median}(X)\) is unique and \(n_2-1\le \operatorname{median}(X)\le n_2\). 
 Therefore \(\pi_1(n_1,n_2)=\P(X\le n_2 -1)=1-\P(X> n_2 )\ge 1-\P(X>\operatorname{median}(X))\ge \frac 1 2\). Now, if we had 
 \( \pi_1(n_1,n_2)=\frac 1 2\), 
 then \(\frac 1 2 = \P(X\le n_2 -1) =\P(X\ge n_2)\) so that every number \(x\in (n_2-1,n_2)\) is a median of \(X\). But this contradicts the uniqueness of the median.\qed

\begin{figure}
\begingroup%
  \makeatletter%
  \providecommand\color[2][]{%
    \errmessage{(Inkscape) Color is used for the text in Inkscape, but the package 'color.sty' is not loaded}%
    \renewcommand\color[2][]{}%
  }%
  \providecommand\transparent[1]{%
    \errmessage{(Inkscape) Transparency is used (non-zero) for the text in Inkscape, but the package 'transparent.sty' is not loaded}%
    \renewcommand\transparent[1]{}%
  }%
  \providecommand\rotatebox[2]{#2}%
  \newcommand*\fsize{\dimexpr\f@size pt\relax}%
  \newcommand*\lineheight[1]{\fontsize{\fsize}{#1\fsize}\selectfont}%
  \ifx\svgwidth\undefined%
    \setlength{\unitlength}{283.90144649bp}%
    \ifx\svgscale\undefined%
      \relax%
    \else%
      \setlength{\unitlength}{\unitlength * \real{\svgscale}}%
    \fi%
  \else%
    \setlength{\unitlength}{\svgwidth}%
  \fi%
  \global\let\svgwidth\undefined%
  \global\let\svgscale\undefined%
  \makeatother%
  \begin{picture}(1,0.72447974)%
    \lineheight{1}%
    \setlength\tabcolsep{0pt}%
    \put(0,0){\includegraphics[width=\unitlength,page=1]{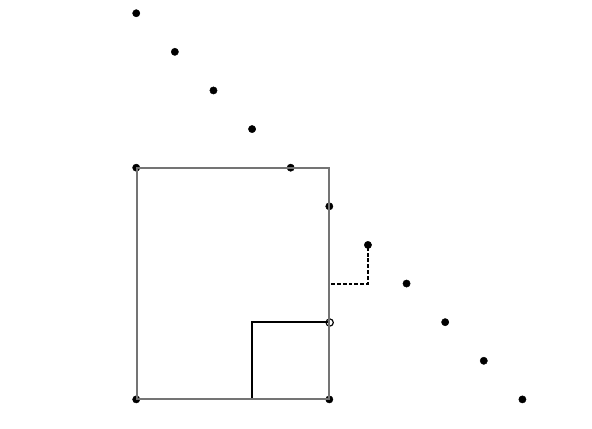}}%
    \put(0.56227635,0.00499236){\makebox(0,0)[t]{\lineheight{1.25}\smash{\begin{tabular}[t]{c}$n_1$\end{tabular}}}}%
    \put(0.17928043,0.43321848){\makebox(0,0)[t]{\lineheight{1.25}\smash{\begin{tabular}[t]{c}$n_2$\end{tabular}}}}%
    \put(0.20638778,0.02213909){\makebox(0,0)[t]{\lineheight{1.25}\smash{\begin{tabular}[t]{c}$0$\end{tabular}}}}%
    \put(0.88325615,0.00499236){\makebox(0,0)[t]{\lineheight{1.25}\smash{\begin{tabular}[t]{c}$n_1+n_2-1$\end{tabular}}}}%
    \put(0.11701692,0.69351181){\makebox(0,0)[t]{\lineheight{1.25}\smash{\begin{tabular}[t]{c}$n_1+n_2-1$\end{tabular}}}}%
  \end{picture}%
\endgroup%
\caption{Any walk reaching \((n_1,y)\) before \((x,n_2)\) crosses the line \(x+y=n_1+n_2-1\) in a point \(x',y'\) where \(y'<n_2\).} \label{fig:m_eq_2}
\end{figure}

\medskip
The second proof starts with noting that the probability for the first player winning can be expressed by the CDF of a negative binomial distribution,
\begin{align}\label{eq:neg-bin-m-eq-2}
\pi_1(n_1,n_2)&=\mathbb{P}(A_2\leq n_2-1)=\Big(\frac{n_1}{n_1+n_2}\Big)^{n_1}\sum_{k_2=0}^{n_2-1}\binom{n_1+k_2-1}{n_1-1,k_2} \Big(\frac{n_2}{n_1+n_2}\Big)^{k_2},
\end{align}
where \(A_2\) counts the number of `failures' (advances of player 2) before the \(n_1\)-th `success' (advances of player 1).
Again, we want to show that if \(n_2>n_1\) then \(\pi_1(n_1,n_2)>\frac{1}{2}\).

A viable way to tackle the problem is to apply known bounds for the negative binomial distribution, using results from \cite{goeb}. There, in \cite[Theorem (3.8)]{goeb}, the author states that the median of a negative binomial distribution  satisfies for $0<p_1<p_2=1-p_1<1$,
\begin{align*}
&\median(A_2)=\Big\lceil -\frac{\log(2)}{\log(p_2)}-1 \Big\rceil,\quad\text{for }n_1=1,\\
&\Big\lfloor (n_1-1)\frac{p_2}{p_1} \Big\rfloor+1\leq \median(A_2)\leq \Big\lceil n_1\frac{p_2}{p_1} \Big\rceil-1,\quad\text{for }n_1>1.
\end{align*}
Inserting our probabilities, the above states $\median(A_2)=\Big\lceil -\frac{\log(2)}{\log(n_2)-\log(1+n_2)}-1 \Big\rceil$ in the case $n_1=1$. It is readily checked that for $n_2>1$, $\median(A_2)<n_2-1$ follows from the estimate 
\(\log(1+x)\ge x\log(2)\) for all \(x\in [0,1]\), so $\pi_1(n_1,n_2)=\mathbb{P}(A_2\leq n_2-1)>\frac{1}{2}$. In the case $n_1>1$, we obtain the upper bound $n_2-1$ for the median, so $\pi_1(n_1,n_2) =\mathbb{P}(A_2\leq n_2-1)\geq \pi_2(n_1,n_2)$.
To show the median cannot attain the value $n_2-1$, we use the main result in \cite{Nowakowski2021}, which states that for binomial distributions with rational $p_1\neq p_2$, a strong median exists, i.e.~the cumulative distribution function (CDF) does not assume the value $\frac{1}{2}$. Since the CDF of a negative binomial distribution can be obtained by the one of a binomial distribution and suitable parameters (in particular $p_1$ and $p_2$ are switched, see e.g.~\cite[(3.5)]{goeb} or \cite{Morris63}), we get that also the CDF of a negative binomial distribution does not attain $\frac{1}{2}$, so $\pi_1(n_1,n_2)>\frac{1}{2}>\pi_2(n_1,n_2)$.\qed
\medskip

A third proof directly applies a recent result from the literature about the incomplete regularized beta function. 
As before, we express the winning probability of player 1 through the CDF of a negative binomial distribution, see \eqref{eq:neg-bin-m-eq-2}. It is well known (see, e.g., \cite[Chapter 5, Section 6]{JohnKotz92} for a classical reference, but this can easily be proven by repeated partial integration) that the cumulative distribution function of a negative binomial distribution is given by the incomplete regularized
beta function. In our case we get
\begin{equation}\label{eq:beta_prob}
\pi_1(n_1,n_2)=I_{\frac{n_1}{n_1+n_2}}(n_1,n_2)=\frac{1}{\Beta(n_1,n_2)}\int_0^{\frac{n_1}{n_1+n_2}}t^{n_1-1}(1-t)^{n_2-1}dt,
\end{equation}
where \(\Beta\) denotes Euler's beta function.
One of the main results in \cite{Hornik24} is that $I_{\frac{n_1}{n_1+n_2}}(n_1,n_2)$ is non-increasing in $n_1$. Therefore, as for $n_1=n_2$ the probabilities $\pi_1$ and $\pi_2$ are equal, $\pi_1(n_1,n_2)$ decreases with increasing $n_1$ (and increases when \(n_1\) decreases).\qed

\begin{remark}
The presented approaches for the proof are clearly not independent. In particular, Figure \ref{fig:m_eq_2} illustrates
that \(\P(X \le n_2-1)=\P(Y\le n_2-1)\) if \(X\) has a binomial distribution with \(n_1+n_2-1\) trials and success probability \(p_2\) and  \(Y\) has a negative binomial distribution, counting the number of failures before the \(n_1\)-th success, where the success probability is 
\(p_1=1-p_2\). 

It turns out that writing the probabilities in the way of equation \eqref{eq:beta_prob} in the third approach, leads to a generalization for general \(m\). The reason why the first proof is not directly generalizable can also be
read off of Figure \ref{fig:cuboid-m3}: If one takes \(m=3\), then one cannot conclude through which side of the cuboid the walk went
from the point were it crossed the plane \(x_1+x_2+x_3=n_1+n_2+n_3-1\). 
\begin{figure}[h]
\includegraphics[width=7cm]{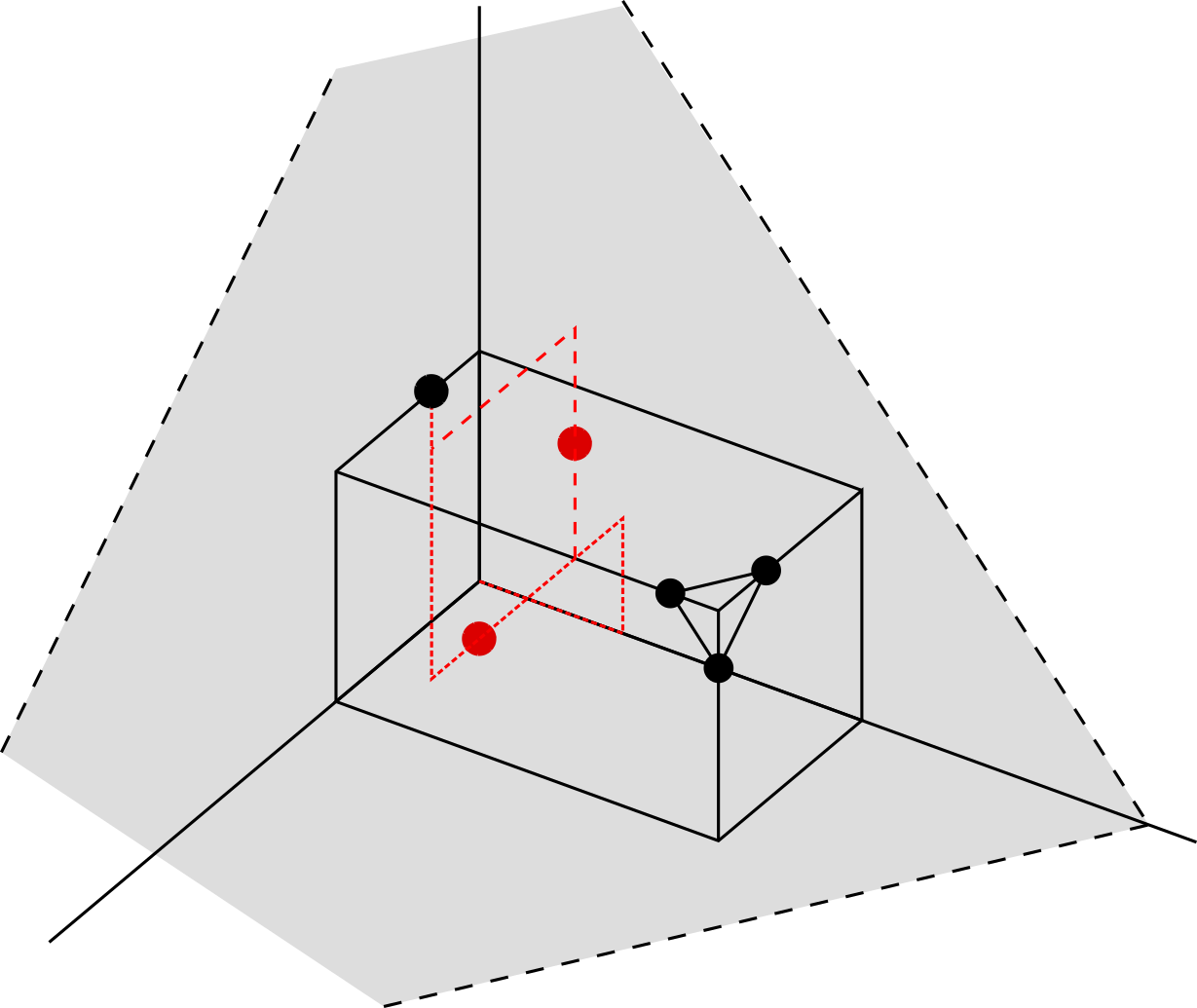}
\caption{Note that the winning probability for player 1 is the hitting probability of one face of an \(m\)-dimensional cuboid. For \(m=3\) there can be paths exiting from different faces of the cuboid but still hitting the plane \(x+y+z=n_1+n_2+n_3-1\) in the same point.}\label{fig:cuboid-m3}
\end{figure}
This also means that the CDF of the negative multinomial distribution can only be expressed by that of a multinomial distribution,
like in \cite{goeb,Morris63}, in dimension one.
\end{remark}

\begin{remark} The fact that $\pi_1(n_1,n_2)>\pi_2(n_1,n_2)$ if $n_1<n_2$ is also a consequence of Lemma \ref{th:inc-reg-beta-dec} from Section \ref{sec:gprob} (setting \(m=2\) and $K=N=0$).  
This lemma even gives a stronger result, since it actually shows that $\pi_1(n_1,n_2)$ is {\em strictly} increasing in $n_2$ (and then the claim follows by symmetry, $\pi_1(n_1,n_2)=\pi_2(n_1,n_2)$ for $n_1=n_2$). 
For the case that  $n_2$ is an integer, this special case of Lemma \ref{th:inc-reg-beta-dec}, stating that $n_2\mapsto I_{\frac{n_1}{n_1+n_2}}(n_1,n_2)$ is increasing, also contains the case of \cite[Theorem 1]{Hornik24}.
\end{remark}

\begin{remark} Our result for $m=2$ complements equation (9) of \cite{Raab1997}, which was treated already in \cite{Raab_1984} and originally found by Vietoris \cite{Vietoris82}, stating that for $n_1, n_2>0$,
\begin{align*}
I_{\frac{n_1}{n_1+n_2}}(n_1,n_2+1)>\frac{1}{2}.
\end{align*}
For comparison, our results imply
\begin{align*}
I_{\frac{n_1}{n_1+n_2}}(n_1,n_2)>\frac{1}{2},\quad\text{if }n_1<n_2,\quad\text{and}\quad I_{\frac{n_1}{n_1+n_2}}(n_1,n_2)<\frac{1}{2},\quad\text{if }n_1>n_2,
\end{align*}
and, of course, equality holds for $n_1=n_2$. 
\end{remark}

\section{Multinomial walks and the inverted Dirichlet distribution}\label{sec:gprob}

Before we answer Question \ref{question1} put forward in Section \ref{sec:introduction}, we present a useful alternative representation of the winning probabilities
in the game with \(m\) players and general probabilities for advancing. This representation can be
seen as an \(m\)-dimensional version of  \eqref{eq:beta_prob}. Let \(m\ge 2\) be an integer and \(n_1,\ldots,n_m\) be positive integers. 
We are interested in a game with $m$ players, where in each round one of the players is chosen at random to advance one step towards their goal. All players start at 0, the goal of player \(\ell\), $1\leq \ell\leq m$, is $n_\ell$ and the probability for player \(\ell\)
to be chosen in a given round is \(p_\ell\),  with $p_1+\dotsb+p_m=1$. 
The winner of the game is the player who reaches their goal first (with the winner being uniquely determined if all \(n_\ell\) are positive). 
By symmetry, it suffices to study the winning probability for $\ell=1$. 

Let \(R_n=(R_{1,n},\ldots, R_{m,n})\) denote the state of the game in round \(n\ge 0\),  that is, \(R_{\ell,n}\) is the number of steps 
player \(\ell\) has advanced toward their goal up until round \(n\). 
Then the process \(R=(R_1,\ldots, R_m)\) is a random walk with state space  \(\N^m\) starting in \(0\in \N^m\) where the transition
probability from state \(\xi=(\xi_1,\ldots, \xi_{m})\) to \(\eta=(\eta_1,\ldots, \eta_{m})\)
is given by (where \(e_\ell\in \N^{m}\) denotes the \(\ell\)-th canonical basis vector)
\[
p_{\xi,\eta}=
\begin{cases}
p_\ell & \text{if  }\eta=\xi+e_\ell,\\
 0 & \text{else}.
\end{cases}
\]

\begin{remark} \label{rem:tildeR}
Consider the process \(\tilde R=(\tilde R_n)_{n\in \N}:=(R_n-n m^{-1}1_{\R^m})_{n\in \N}\), where \(1_{\R^m}\in \R^m\) denotes the vector with all entries equal to 1. 
Then \(\tilde R\) is a random walk on the lattice generated by the vectors
\((e_1-m^{-1}1_{\R^m},\ldots,e_m-m^{-1}1_{\R^m})\), which span an \((m-1)\)-dimensional hyperplane. Write \(\tilde e_\ell:=e_\ell -m^{-1}1_{\R^m}\). 
We now study the recurrence/transience of this 
random walk.

The walk $\tilde R$ can return to 0 only after a number \(n\) of steps which is a multiple of \(m\), i.e., \(n=mk\). 
Of these steps, there must be precisely $k$ in each of the directions  \(\tilde e_1,\ldots,\tilde e_m\).
So we get for the probability of returning to 0 in \(mk\) steps
\begin{align*}
p_{00}^{mk}&={mk\choose \underset{\text{\(m\) times}}{\underbrace{k,\ldots,k}}}p_1^k\dotsb p_m^k.
\end{align*}
Stirling's formula gives $\frac{(mk)!}{k!\ldots!k!}\asymp \frac{\sqrt{2m k \pi}(mk)^{mk}e^{-mk}}{\left(\sqrt{2k\pi}k^ke^{-k}\right)^m}
=\frac{\sqrt{m}}{\left(\sqrt{2\pi}\right)^{m-1}}\frac{1}{k^{\frac{m-1}{2}}}m^{mk}$ for \(k\to \infty\).

Write $\eta:=\Big(\prod_{j=1}^m mp_j\Big)^{\frac{1}{m}}$. Then by the arithmetic-geometric mean inequality
$$\eta=\Big(\prod_{j=1}^m mp_j\Big)^{\frac{1}{m}}\le \frac{1}{m}\sum_{j=1}^m mp_j=1$$
with equality iff $p_j= \frac{1}{m}$.
Therefore,  if $p_j\ne  \frac{1}{m}$ for at least one $j\in \{1,\dotsc,m\}$ 
\[
p_{00}^{mk}\asymp \frac{\sqrt{m}}{\left(\sqrt{2\pi}\right)^{m-1}}\frac{1}{k^{\frac{m-1}{2}}}\eta^{mk}<\frac{\sqrt{m}}{\left(\sqrt{2\pi}\right)^{m-1}}\eta^{mk},
\] is a summable sequence in $k$, so by \cite[Theorem 1.5.3]{norris97} the point $0$ (and therefore the random walk) is transient. 

Otherwise if all $p_j=\frac{1}{m}, j\in \{1,\dotsc,m\}$ we get that \(
p_{00}^{mk}\asymp \frac{\sqrt{m}}{\left(\sqrt{2\pi}\right)^{m-1}}\frac{1}{k^{\frac{m-1}{2}}}.
\)
So $\sum_{n\in \N_{\ge 0}}p_{00}^{(n)}=\sum_{k\in \N_{\ge 0}}p_{00}^{(mk)}$ is infinite for \(m\le 3\) and finite for \(m\ge 4\). 
Thus by \cite[Theorem 1.5.3]{norris97} the point $0$ is recurrent for \(m\le 3\) and transient for \(m\ge 4\).

Note that this is in correspondence with the classical results by G.~Polya \cite{Polya1921aa}.
\end{remark}

\begin{remark} 
The process \(R\) also appears in applications, such as mathematical finance.
Concretely, in the {\em multinomial market model} \cite{he1990}, a generalization of the Cox-Ross-Rubinstein binomial model, there are \(m-1\) risky assets 
(plus 1 riskless asset, which we ignore here) with discounted prices \(S^1_n,\ldots, S^{m-1}_n>0\) at times \(n\in \N\), that evolve in the following way: \(S^j_{n+1}=S^j_n Y^j_{n+1}\) where 
\(Y_1,Y_2,\ldots\) are i.i.d.~random vectors with values in \(\{v_1,\ldots,v_{m}\}\subseteq \R_{>0}^{m-1}\). Therefore 
\(\P(S^j_{n+1}=S^j_n v^j_\ell)=p_{\ell}:=\P(Y^j_{n+1}=v_\ell)>0\), \(n\in \N, j\in \{1,\ldots,m-1\}\), \(\ell\in \{1,\ldots,{m}\}\). Taking the component-wise logarithm
gives 
\[
\P\big(\log(S^j_{n+1})=\log(S^j_n)+ \log(v^j_\ell)\big)=p_\ell
\] 
respectively
\[
\P\big(\log(S_{n+1})=\log(S_n)+ L e_\ell\big)=p_\ell
\] 
where 
\(e_\ell\) denotes the \(\ell\)-th canonical basis vector, and \(L\) is the matrix
\[
L=\begin{pmatrix}
\log(v^1_1)&\cdots&\log(v^1_m)\\
\vdots&\ddots&\vdots\\
\log(v^{m-1}_1)&\cdots&\log(v^{m-1}_m)\\
\end{pmatrix}\,.
\]
With this notation, \(\log(S_{n})=\log(S_0)+L R_n\).
In \cite{he1990} this model is studied in detail (see also \cite{Boyle89, ChenChungYang02, Sierag13}). In particular, the convergence of asset prices in the multinomial model to those in the corresponding Black-Scholes model is proven. For this, it is shown in \cite{he1990} that a properly rescaled and projected version of \(R\) converges to an \(m-1\)-dimensional standard Brownian motion. This is in line with the appearance of normal variates in 
Section \ref{sec:asymptotics}.
\end{remark}
 
\new{
\begin{definition}\label{defi:pitau} Consider paths of the random walk \(R\) defined above. 
\begin{enumerate} 
\item The winning probability of player \(\ell\), 
i.e., the probability that \(R_{n,\ell}\) reaches \(n_\ell\) before any \(R_{n,k}\) reaches \(n_k\), \(k\ne \ell\),
is 
\[
\pi_\ell:=\P\Big(\bigcup_{n=1}^\infty \big\{R_{n,\ell}=n_\ell,R_{n-1,\ell}=n_\ell-1,R_{n,k}<n_k,  1\leq k\leq m,k\ne \ell\big\}\Big).
\]
\item The probability of player \(\ell\) being last (for short: losing probability of player \(\ell\)), 
i.e., the probability that \(R_{n,\ell}\) reaches \(n_\ell\) after each \(R_{n,k}\) reaches \(n_k\), \(k\ne \ell\),
is 
\[
\tau_\ell:=\P\Big(\bigcup_{n=1}^\infty \big\{R_{n,\ell}=n_\ell,R_{n-1,\ell}=n_\ell-1,R_{n-1,k}\ge n_k,  1\leq k\leq m,k\ne \ell\big\}\Big).
\]
\end{enumerate}
\end{definition}

\begin{remark}\label{rem:cones} Both the winning probability in the game as well as the probability of being last have a geometric interpretation.
Consider the cone
\[
C_W:=\bigcap_{\ell=1}^m\{(x_1,\ldots,x_m)\in \R^m\colon x_\ell\le n_\ell-\tfrac 1 2\}.
\]
The random walk \(R\) starts in \(0\in C_W\) and leaves \(C_W\) 
 through precisely one of its \((m-1)\)-dimensional faces. The probability of player \(\ell\) winning is the probability of leaving through the  
 \(\ell\)-th face \(\{(x_1,\ldots,x_m)\in \R^m\colon x_\ell= n_\ell\}\cap C_W\). 
\new{
In other words, the  probability of player \(\ell\) winning is the probability of hitting the half space 
\(\{(x_1,\ldots,x_m)\in \R^m\colon x_\ell\ge n_\ell-\tfrac 1 2\}\) at the same time as the complement of \(C_W\).
}

On the other hand, consider the cone 
\[
C_L:=\bigcap_{\ell=1}^m\{(x_1,\ldots,x_m)\in \R^m\colon x_\ell\ge n_\ell-\tfrac 1 2\}.
\]
With probability 1, the random walk will enter \(C_L\) through precisely one of its \((m-1)\)-dimensional faces.  Player \(\ell\) is last in the game if the random walk enters 
through the \(\ell\)-th face  \(\{(x_1,\ldots,x_m)\in \R^m\colon x_\ell= n_\ell-\tfrac 1 2\}\cap C_L\).
In other words, the  probability of player \(\ell\) being last is the probability of hitting the half space 
\(\{(x_1,\ldots,x_m)\in \R^m\colon x_\ell\ge n_\ell-\tfrac 1 2\}\) at the same time as  \(C_L\).

With this, the winning and losing probabilities can be expressed as probabilities that certain 
hitting times for the random walk coincide.
\end{remark}
}

Given \(n_1,\ldots,n_m\in \N_{\ge 1}\), and \(p_1,\ldots,p_m\in (0,1)\) with \(p_1+\dotsb+p_m=1\), 
we get winning probabilities \(\pi_1,\ldots,\pi_m\). Depending on the purpose we will consider 
\(\pi_1,\ldots,\pi_m\) as functions of \(n_1,\ldots,n_m\) or of \(p_1,\ldots,p_m\) (and analog for \(\tau_1,\ldots,\tau_m\), see Section \ref{sec:losing-prob}).
 
 \medskip
 
 For the next proposition we concentrate on \(\pi_1\). Respective statements can be deduced for 
  \(2\le \ell \le m\) by symmetry.
 
 \begin{proposition}\label{thm:winning-prob-neg}  The winning probability has the form  
 \begin{align*}
 \pi_1&=\mathbb{P}(A_k<n_k,  2\leq k\leq m).
 \end{align*}
where $(A_2,\ldots,A_m)$ has a negative multinomial distribution (see e.g.~\cite[Chapter 36]{JohnKotzBal97}) with parameter set 
$\big(n_1,p_2,\ldots,p_m\big)$, i.e.,
\begin{align}\label{eq:neg-multi}
\pi_1=\Big(1-\sum_{k=2}^m p_k\Big)^{n_1}\sum_{k_2=0}^{n_2-1}\cdots\sum_{k_m=0}^{n_m-1}\binom{n_1+k_2+\dotsb+k_m-1}{n_1-1, k_2,\dotsc ,k_m}\prod_{\ell=2}^m p_\ell^{k_\ell}\,.
\end{align}
\end{proposition}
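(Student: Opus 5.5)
The plan is to stop the walk \(R\) at the random round \(T\) in which player 1 makes their \(n_1\)-th step. We then read off the positions of all other players at that moment. Define \(T:=\min\{n\ge 1\colon R_{n,1}=n_1\}\), which is almost surely finite because \(p_1>0\). Set \(A_k:=R_{T,k}\) for \(2\le k\le m\), the number of advances of player \(k\) before the \(n_1\)-th advance of player 1.

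Since each coordinate of \(R\) is nondecreasing and moves by at most one per round, player 1 wins exactly when \(R_{T,k}<n_k\) for all \(k\ne 1\). Indeed, the event in Definition \ref{defi:pitau} with \(\ell=1\) forces \(n=T\). Conversely, if \(R_{T,k}<n_k\) for all \(k\ge 2\), then no other player reached their goal at or before round \(T\). This identifies \(\pi_1=\P(A_k<n_k,\ 2\le k\le m)\), which is the first claim.

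Next I would compute the law of \((A_2,\ldots,A_m)\) directly. The event \(\{A_2=k_2,\ldots,A_m=k_m\}\) means the following: among the first \(n_1+k_2+\dotsb+k_m-1\) rounds there are exactly \(n_1-1\) advances of player 1 and \(k_\ell\) advances of player \(\ell\), and round \(n_1+k_2+\dotsb+k_m\) is an advance of player 1. The rounds are i.i.d. categorical with probabilities \(p_1,\ldots,p_m\). Hence this probability equals
\[
\binom{n_1+k_2+\dotsb+k_m-1}{n_1-1,k_2,\ldots,k_m}p_1^{n_1-1}\prod_{\ell=2}^m p_\ell^{k_\ell}\cdot p_1 .
\]
This is exactly the negative multinomial probability mass function with parameters \((n_1,p_2,\ldots,p_m)\), since \(p_1=1-\sum_{k\ge2}p_k\).

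Summing this over \(0\le k_\ell\le n_\ell-1\) gives \eqref{eq:neg-multi}. The only step needing care is the event identification in the first step: one has to check that ties are impossible and that the union over \(n\) in Definition \ref{defi:pitau} reduces to the single time \(T\). Both follow from the fact that exactly one coordinate increases by one in each round. No earlier result beyond the definitions is needed.
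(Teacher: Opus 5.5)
Your proof is correct and follows essentially the same path-counting argument as the paper. A winning path for player 1 has $n_1$ steps in direction $e_1$, the last of which is such a step, and $k_\ell<n_\ell$ steps in each direction $e_\ell$. Each such path has probability $p_1^{n_1}\prod_{\ell=2}^m p_\ell^{k_\ell}$, and their number is the multinomial coefficient. Your stopping time $T$ and the definition $A_k:=R_{T,k}$ only make the identification $\pi_1=\P(A_k<n_k,\ 2\le k\le m)$ more explicit than the paper's "summing gives both sides."
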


\begin{proof}  Write \(p_1:=1-\sum_{k=2}^m p_k\).
A winning path for player 1 consists of \(n_1\) steps in direction of \(e_1\) and \(k_2,\ldots,k_m\)
in the respective other directions, where \(k_\ell<n_\ell\) for every \(\ell\) and the last step is taken in direction \(e_1\). 
The probability for one such path with given \(k_2,\ldots,k_m\) is \(p_1^{n_1} \prod_{\ell=2}^m p_\ell^{k_\ell}\), and there are 
\(\binom{n_1+k_2+\dotsb+k_m-1}{n_1-1, k_2,\dotsc ,k_m}\)-many such paths. Thus
\[
\P\big(R_{n,1}=n_1,R_{n-1,1}=n_1-1,R_{n,j}=k_j,  2\leq j \leq m\big)=\binom{n_1+k_2+\dotsb+k_m-1}{n_1-1, k_2,\dotsc ,k_m}p_1^{n_1} \prod_{j=2}^m p_j^{k_j}
\] 
and summing over all \(k_2,\ldots,k_m\) with \(k_j<n_j\), \(k=2,\ldots,m\) gives \eqref{eq:neg-multi} on the right hand side and 
\(\pi_1\) on the left hand side (cf.~Definition \ref{defi:pitau}).
\end{proof}

A fundamental step in showing our main assertion is to rewrite the probability of the first player winning with the help of
 the inverted Dirichlet distribution. This distribution is described in detail in \cite{Tiao_Cuttman1965} as a special case of the distributions appearing in \cite{OlkRub64}, and was further studied, e.g., in \cite{Bdir11,  lingm76, Phil88, Ghorbel2009, Ng11, Fang_et_al_1990, Otto2025, Yass74, Yass76} and applied, for example, in \cite{Bdir12, Bdir13, Lai18, Ling24, Tirdad2015}. The subsequent definition is taken from \cite[Example 6.3]{Fang_et_al_1990}.

\begin{definition}[Inverted Dirichlet distribution]\label{def:idir}\hfill\\
Let \(m\ge 2\) be an integer and \(n_1,\ldots,n_m\in (0,\infty)\).
A random vector $(X_2,\dotsc,X_m)$ with values in \([0,\infty)^{m-1}\) has an \emph{inverted Dirichlet distribution} with parameter set $(n_2,\dotsc,n_m;n_1,\lambda)$, denoted by $(X_2,\dotsc,X_m)\sim \operatorname{ID}(n_2,\dotsc,n_m;n_1,\lambda)$, if it has the joint density
\begin{align*}
(s_2,\dotsc,s_m)\mapsto\frac{1}{\Beta(n_1,\dotsc,n_m)}\frac{\lambda^{n_1}\prod_{k=2}^m s_k^{n_{k}-1}}{\Big(\lambda+\sum_{k=2}^m s_k\Big)^{\sum_{k=1}^m n_k}}
\end{align*}
on $[0,\infty)^{m-1}$.
Here $\Beta(n_1,\dotsc,n_m):=\frac{\prod_{k=1}^m \Gamma(n_k)}{\Gamma\big(\sum_{k=1}^m n_k\big)}$ is the multivariate Euler beta function.

In the case where \(\lambda=1\) we simply write \((X_2,\dotsc,X_m)\sim\operatorname{ID}(n_2,\dotsc,n_m;n_1)\).
\end{definition}

\begin{remark}\label{rem:idir}\hfill
\begin{enumerate}
\item In this article we will use only the case of $\lambda=1$ for the inverted Dirichlet distribution.
\item In the case $m=2$ we obtain the density $\frac{1}{\Beta(n_1,n_2)}\frac{s^{n_2-1}}{(1+s)^{n_1+n_2}}$ of $X_2$. Therefore, its survival function $x\mapsto\mathbb{P}(X_2> x)$ is given by
\begin{align*}
\mathbb{P}(X_2> x)=\frac{1}{\Beta(n_1,n_2)}\int_x^\infty \frac{s^{n_2-1}}{(1+s)^{n_1+n_2}}ds, 
\end{align*}
which can be expressed (substituting $t=\frac{1}{1+s}$) as
\begin{align*}
&\frac{1}{\Beta(n_1,n_2)}\int_x^\infty \frac{s^{n_2-1}}{(1+s)^{n_1+n_2}}ds\\
&=\frac{1}{\Beta(n_1,n_2)}\int_0^{\frac{1}{1+x}}t^{n_1-1}(1-t)^{n_2-1}dt=I_{\frac{1}{1+x}}(n_1,n_2),
\end{align*}
where \(I\) denotes the \emph{regularized (lower) incomplete beta function}. Therefore the survival function of a vector $(X_2,\dotsc,X_m)$ provides a multivariate generalization of the regularized (lower) incomplete beta function.
\item This generalization here is the same as \cite{GittinsMaher} propose. Another way to generalize  the regularized incomplete beta function $I$ to multiple dimensions is given by the CDF of the \emph{Dirichlet distribution} (which generalizes the beta distribution supported on a multidimensional simplex, see e.g.~\cite[Section 2]{Tiao_Cuttman1965} and the references therein, or \cite[Example 6.1]{Fang_et_al_1990}. Up to the normalizing factor, the latter coincides with the generalization given in \cite{Raab1997}.
\item The inverted Dirichlet distribution (as well as the Dirichlet distribution) belong to the class of \emph{multivariate Liouville distributions}, see e.g.~\cite[Chapter 6]{Fang_et_al_1990}.
\item This way, the inverted Dirichlet distribution is a multivariate generalization of the \emph{inverted beta distribution} (see \cite{Tiao_Cuttman1965}).
\item\label{it:idir-marg} Let $(X_2,\dotsc,X_m)$ have an inverted Dirichlet distribution with parameter set $(n_2,\dotsc,n_m;n_1)$. 
Then, $(X_2,\dotsc,X_{m-1})$ has an inverted Dirichlet distribution with parameter set $(n_2,\dotsc,n_{m-1};n_1)$
(see \cite[p.~795, below eq.~(2.14)]{Tiao_Cuttman1965}). That is, the marginals of an inverted 
Dirichlet distribution is an inverted 
Dirichlet distribution, where the parameters of the marginal are obtained by removing the complementary parameters of the original distribution.
\end{enumerate}
\end{remark}

We now state two important results concerning the inverted Dirichlet distribution. Note that whenever we consider gamma distributions, we use the {\em scale parametrization}.

\begin{lemma}\label{lem:gammarep} Let \(m\ge 2\) be an integer and \(n_1,\ldots,n_m\in (0,\infty)\).
Let $(G_1,\dotsc,G_m)$ be a vector of independent random variables with $G_k\sim\operatorname{Gamma}(n_k,1)$, then
\begin{align*}
\left(\frac{G_2}{G_1},\dotsc,\frac{G_m}{G_1}\right)\sim \operatorname{ID}(n_2,\dotsc,n_m;n_1)
\end{align*}
and 
\begin{align*}
\left(\frac{G_1}{\sum_{k=1}^m G_k},\dotsc,\frac{G_m}{\sum_{k=1}^m G_k}\right)\sim \operatorname{D}(n_1,\dotsc,n_m),
\end{align*}
where \(\operatorname{D}(n_1,\dotsc,n_m)\) denotes the Dirichlet distribution with parameter set \((n_1,\dotsc,n_m)\) (see, e.g., \cite[Example 6.1]{Fang_et_al_1990} for the definition).

In particular, if $(V_1,\dotsc,V_m)$ is Dirichlet distributed with parameter set $(n_1,\dotsc,n_m)$ then, defining 
\begin{align*}
X_k:=\frac{V_k}{1-V_2-\dotsb-V_m},\quad 2\leq k\leq m,
\end{align*}
we get $(X_2,\dotsc,X_m)\sim \operatorname{ID}(n_2,\dotsc,n_m;n_1)$. 

Conversely, if $(X_2,\dotsc,X_m)\sim \operatorname{ID}(n_2,\dotsc,n_m;n_1)$, then 
\begin{align*}
V:=\Big(\frac{1}{1+X_2+\dotsb+X_m},\frac{X_2}{1+X_2+\dotsb+X_m},\dotsc,\frac{X_m}{1+X_2+\dotsb+X_m}\Big)
\end{align*}
 is Dirichlet distributed with parameter set $(n_1,\dotsc,n_m)$.
\end{lemma}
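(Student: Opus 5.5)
The plan is a direct change of variables on the product density of the gamma variables, followed by two further transformations for the Dirichlet statements. The joint density of \((G_1,\dotsc,G_m)\) on \((0,\infty)^m\) is
\[
\prod_{k=1}^m \frac{g_k^{n_k-1}e^{-g_k}}{\Gamma(n_k)} .
\]
First I apply the map \((g_1,\dotsc,g_m)\mapsto (g_1,s_2,\dotsc,s_m)\) with \(s_k=g_k/g_1\). Its inverse is \(g_k=g_1 s_k\) for \(k\ge 2\), and the Jacobian of the inverse is \(g_1^{m-1}\). The joint density of \((G_1,X_2,\dotsc,X_m)\) is therefore
\[
\frac{1}{\prod_{k}\Gamma(n_k)}\, g_1^{\sum_k n_k-1}\prod_{k=2}^m s_k^{n_k-1}\, e^{-g_1(1+\sum_{k\ge2}s_k)} .
\]
Next I integrate out \(g_1\) using \(\int_0^\infty g^{a-1}e^{-bg}\,dg=\Gamma(a)b^{-a}\) with \(a=\sum_k n_k\) and \(b=1+\sum_{k\ge 2}s_k\). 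This yields exactly the density of Definition \ref{def:idir} with \(\lambda=1\), where the normalizing constant is \(\Gamma(\sum n_k)/\prod\Gamma(n_k)=1/\Beta(n_1,\dotsc,n_m)\).

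For the Dirichlet statement I would cite the standard fact (e.g.\ \cite[Example 6.1]{Fang_et_al_1990}), or reprove it the same way. Set \(T=\sum_k G_k\) and \(v_k=g_k/T\) for \(k\ge2\), with inverse \(g_1=T(1-\sum_{k\ge2}v_k)\) and \(g_k=Tv_k\); the Jacobian is \(T^{m-1}\). Integrating out \(T\) gives the density
\[
\frac{1}{\Beta(n_1,\dotsc,n_m)}\,\Big(1-\sum_{k\ge 2} v_k\Big)^{n_1-1}\prod_{k\ge2}v_k^{n_k-1}
\]
on the simplex.

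The two "in particular" statements then follow without further computation by realizing the distributions through gammas. If \(V\sim\operatorname{D}(n_1,\dotsc,n_m)\), then \(V\) has the same law as \((G_k/T)_k\). Hence \(V_k/(1-V_2-\dotsb-V_m)=V_k/V_1\) has the same law as \((G_k/G_1)_{k\ge 2}\), because the map is a fixed measurable function and \(V_1=1-V_2-\dotsb-V_m\). By the first part, this law is \(\operatorname{ID}(n_2,\dotsc,n_m;n_1)\).

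Conversely, if \(X\sim\operatorname{ID}\), then \(X\) has the same law as \((G_k/G_1)_{k\ge2}\). Since
\[
\frac{1}{1+\sum_{k\ge2}G_k/G_1}=\frac{G_1}{T}
\qquad\text{and}\qquad
\frac{G_k/G_1}{1+\sum_{j\ge2}G_j/G_1}=\frac{G_k}{T},
\]
the vector \(V\) has the same law as \((G_k/T)_k\), which is \(\operatorname{D}(n_1,\dotsc,n_m)\).

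No step is a genuine obstacle. The only care needed is computing the Jacobian correctly (the triangular structure makes the determinant \(g_1^{m-1}\), respectively \(T^{m-1}\)) and noting that the transformations are diffeomorphisms between \((0,\infty)^m\) and \((0,\infty)\times(0,\infty)^{m-1}\), respectively the open simplex, so that boundary sets of measure zero can be ignored.
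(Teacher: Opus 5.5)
Your proposal is correct and complete. The Jacobians $g_1^{m-1}$ and $T^{m-1}$ are right, and the power of $g_1$ collapses to $\sum_k n_k-1$. Integrating out $g_1$ therefore gives exactly the density of Definition~\ref{def:idir} with $\lambda=1$. Integrating out $T$ likewise gives the Dirichlet density on the open simplex.

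The paper gives no argument of its own for this lemma; it refers to \cite{Yass74}, \cite{Tiao_Cuttman1965} and \cite{Ghorbel2009}. Your change-of-variables computation makes the lemma self-contained at the cost of a few lines.

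You also obtain the ``in particular'' and ``conversely'' statements differently from a direct computation. You push the gamma representation forward through a fixed measurable map, using $V_1=1-V_2-\dotsb-V_m$ almost surely and the identities $\frac{1}{1+\sum_{j}G_j/G_1}=G_1/T$ and $\frac{G_k/G_1}{1+\sum_j G_j/G_1}=G_k/T$. This avoids a third Jacobian calculation relating the Dirichlet and inverted Dirichlet densities directly. It is the cleaner way to organize the lemma.
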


\begin{proof}
For a proof we refer to \cite[Theorem 2.1]{Yass74}, \cite[Section 2]{Tiao_Cuttman1965} or \cite[Section 2.1]{Ghorbel2009}.
\end{proof}

The second important fact in this section, Lemma \ref{lem:conjugate},
 should be compared to the conjugate relation between gamma and Poisson distributed random variables, which has also been used in \cite[Explanation below Corollary 1]{Hornik24}. A more general form of Lemma  \ref{lem:conjugate} can be found in \cite[Section 17.4, eq.~(17.24)]{JohnKotzBal94}, which goes back to \cite{Pearson1922}.

\begin{lemma}\label{thm:conjugate-poisson}
If $P\sim\operatorname{Pois}(\lambda)$ and $G\sim\operatorname{Gamma}(n,1)$, then
\begin{align*}
\mathbb{P}(P< n)=\mathbb{P}(P\le n-1)=\mathbb{P}(G>\lambda).
\end{align*}
\end{lemma}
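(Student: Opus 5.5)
The cleanest route is through the Poisson process, with a direct calculus check as backup. Let \((N_t)_{t\ge 0}\) be a Poisson process of unit rate, and let \(T_n\) be its \(n\)-th arrival time. Here \(n\) is a positive integer, as in the application to the game. Then \(N_\lambda\sim\operatorname{Pois}(\lambda)\). Also \(T_n=E_1+\dotsb+E_n\) is a sum of \(n\) independent \(\operatorname{Exp}(1)\) variables, so \(T_n\sim\operatorname{Gamma}(n,1)\) in the scale parametrization. The key identity is the event equality
\[
\{N_\lambda\le n-1\}=\{T_n>\lambda\}.
\]
It holds because at most \(n-1\) arrivals occur in \([0,\lambda]\) exactly when the \(n\)-th arrival comes after time \(\lambda\). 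Taking probabilities gives \(\P(P\le n-1)=\P(G>\lambda)\), since only the marginal laws matter. The first equality \(\P(P<n)=\P(P\le n-1)\) is trivial because \(P\) is integer valued.

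To keep the argument self-contained and avoid relying on Poisson-process facts, I would also give the analytic verification. Define
\[
f(\lambda):=\P(G>\lambda)=\frac{1}{\Gamma(n)}\int_\lambda^\infty t^{n-1}e^{-t}\,dt,
\qquad
g(\lambda):=\sum_{j=0}^{n-1}e^{-\lambda}\frac{\lambda^j}{j!}.
\]
Both equal \(1\) at \(\lambda=0\). Differentiating \(g\), the sum telescopes:
\[
g'(\lambda)=\sum_{j=1}^{n-1}e^{-\lambda}\frac{\lambda^{j-1}}{(j-1)!}-\sum_{j=0}^{n-1}e^{-\lambda}\frac{\lambda^j}{j!}=-e^{-\lambda}\frac{\lambda^{n-1}}{(n-1)!}.
\]
This equals \(f'(\lambda)\) because \(\Gamma(n)=(n-1)!\). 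Hence \(f\equiv g\) for all \(\lambda\ge 0\).

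The only real care point is the telescoping sum and the integrality of \(n\). For non-integer \(n\) the statement would need \(\lceil n\rceil\) in place of \(n\), but here \(n\) is the goal of a player and hence an integer. No step presents a genuine obstacle.
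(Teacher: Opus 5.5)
Your proof is correct, but your main route differs from the paper's. The paper integrates \(\P(G>\lambda)=\int_\lambda^\infty \frac{x^{n-1}e^{-x}}{\Gamma(n)}\,dx\) by parts repeatedly. Each step splits off the boundary term \(e^{-\lambda}\lambda^{n-1}/(n-1)!=\P(P=n-1)\) and lowers the shape parameter by one, so induction on \(n\) produces the Poisson sum.

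Your Poisson-process argument explains why the identity holds, via the event identity \(\{N_\lambda\le n-1\}=\{T_n>\lambda\}\). It is not logically independent of the lemma, though. The fact that i.i.d.\ \(\operatorname{Exp}(1)\) interarrival times give \(\operatorname{Pois}(\lambda)\) counts is usually proved by exactly this computation, or an equivalent convolution argument. So it is your analytic check that makes the proposal self-contained.

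That check (differentiate in \(\lambda\), let the Poisson sum telescope to \(-e^{-\lambda}\lambda^{n-1}/(n-1)!\), and match values at \(\lambda=0\)) is the differentiated form of the paper's integration-by-parts identity. It replaces the induction on \(n\) with one comparison of derivatives plus an initial condition.

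Your remark that \(n\) must be a positive integer is apt. The lemma leaves this implicit, while elsewhere (e.g.\ Lemma~\ref{lem:gammarep}) shape parameters are real. The paper's repeated partial integration also terminates only for integer \(n\).
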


\begin{proof} Use repeated partial integration on 
\(
\mathbb{P}(G>\lambda)=\int_\lambda^\infty \frac{x^{n-1}e^{-x}}{\Gamma(n)}dx\,.
\)
\detail{
\begin{align*}
\mathbb{P}(G>\lambda)
&=\int_{\lambda}^\infty \frac{ x^{n-1}e^{- x}}{\Gamma(n)}dx
=\frac{x^{n-1}}{\Gamma(n)} e^{-\lambda}+\int_{\lambda}^\infty \frac{ x^{n-2}e^{- x}}{\Gamma(n-1)}dx
=\P(P=n-1)+\mathbb{P}(G'>\lambda)\,,
\end{align*}
where \(G'\sim \operatorname{Gamma}(n-1,1)\).
}
\end{proof}

\begin{lemma}\label{lem:conjugate} Let \(m\ge 2\) be an integer, \(n_1,\ldots,n_m\in \N_{\ge 1}\) and 
\(p_1,\ldots,p_m\in (0,1)\) with  \(p_1+\ldots+p_m=1\).
Let $(A_2,\dotsc,A_m)$ have a negative multinomial distribution with parameters $(n_1;p_2,\dotsc,p_m)$ and $(X_2,\dotsc,X_m)\sim \operatorname{ID}(n_2,\dotsc,n_m;n_1)$. Then
\begin{align*}
\mathbb{P}(A_2\leq n_2-1,\dotsc,A_m\leq n_m-1)=\mathbb{P}\left(X_2>\frac{p_2}{p_1},\dotsc,X_m>\frac{p_m}{p_1}\right).
\end{align*}
\end{lemma}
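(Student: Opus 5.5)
The approach is to realize the negative multinomial vector through Poissonization, and then use Lemma \ref{thm:conjugate-poisson} coordinatewise together with Lemma \ref{lem:gammarep}.

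\emph{Step 1: embed the walk in Poisson processes.} Take independent Poisson processes $N^1,\dots,N^m$ on $[0,\infty)$ with intensities $p_1,\dots,p_m$. Their superposition is a rate-one Poisson process in which each point independently carries label $\ell$ with probability $p_\ell$. So the sequence of labels has exactly the law of the increments of the walk $R$.

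Let $T$ be the time of the $n_1$-th point of $N^1$. Then $T\sim\operatorname{Gamma}(n_1,1/p_1)$ in scale parametrization, so $p_1T\sim\operatorname{Gamma}(n_1,1)$. The vector $(N^2_T,\dots,N^m_T)$ counts the steps of the other players before the $n_1$-th step of player 1, so it has the law of $(A_2,\dots,A_m)$.

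\emph{Step 2: condition on $T$.} Since $T$ is independent of $N^2,\dots,N^m$, conditionally on $T=t$ the variables $N^k_t$ are independent $\operatorname{Pois}(p_kt)$. Lemma \ref{thm:conjugate-poisson} then gives
\[
\P(N^k_t\le n_k-1)=\P(G_k>p_kt),
\]
where $G_k\sim\operatorname{Gamma}(n_k,1)$ are independent of each other and of $T$. Write $G_1:=p_1T\sim\operatorname{Gamma}(n_1,1)$. Integrating over $t$ yields
\[
\P(A_k\le n_k-1,\ 2\le k\le m)=\E\Big[\prod_{k=2}^m \P(G_k>p_kT\mid T)\Big]=\P\Big(G_k>\tfrac{p_k}{p_1}G_1,\ 2\le k\le m\Big),
\]
using the independence of $G_1,\dots,G_m$.

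\emph{Step 3: conclude.} The last event is $\{G_k/G_1>p_k/p_1,\ 2\le k\le m\}$. By Lemma \ref{lem:gammarep}, $(G_2/G_1,\dots,G_m/G_1)\sim\operatorname{ID}(n_2,\dots,n_m;n_1)$, which is the claim.

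The main point needing care is Step 1: checking that the Poissonized counts at the stopping time $T$ really have the negative multinomial law with parameters $(n_1;p_2,\dots,p_m)$. This follows from the marking (thinning) property of the superposed process, and it can also be confirmed directly by comparing with \eqref{eq:neg-multi}.

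As a fallback, one can compute the right-hand side directly. Write the inverted Dirichlet probability as an integral of gamma densities. Expand each $\P(G_k>p_kg/p_1)$ as the Poisson sum $\sum_{j_k<n_k}e^{-p_kg/p_1}(p_kg/p_1)^{j_k}/j_k!$. Then integrate in $g$ against the $\operatorname{Gamma}(n_1,1)$ density; using $\sum_k p_k=1$ the exponent becomes $-g/p_1$. This reproduces the multinomial coefficients and the factor $p_1^{n_1}\prod_{k\ge2} p_k^{j_k}$ in \eqref{eq:neg-multi} term by term.
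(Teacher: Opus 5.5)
Your proof is correct and uses the same chain as the paper's: Lemma~\ref{lem:gammarep}, conditioning on $G_1$, Lemma~\ref{thm:conjugate-poisson} applied coordinatewise, and identifying a gamma mixture of independent Poisson variables with the negative multinomial law. You run it from the negative multinomial side, and your Poisson-process embedding (or your fallback term-by-term check against \eqref{eq:neg-multi}) proves that last identification directly, where the paper cites Sibuya, so your argument is self-contained.
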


\begin{proof}
\begin{newenv}
Using Lemma \ref{lem:gammarep}, we rewrite
\begin{align*}
&\mathbb{P}\left(X_2>\frac{p_2}{p_1},\dotsc,X_m>\frac{p_m}{p_1}\right)=\mathbb{P}\left(G_2>\frac{p_2G_1}{p_1},\dotsc,G_m>\frac{p_mG_1}{p_1}\right)\\
&=\int_0^\infty\mathbb{P}\left(G_2>\frac{p_2x}{p_1},\dotsc,G_m>\frac{p_m x}{p_1}\right)\frac{x^{n_1-1}}{\Gamma(n_1)}e^{-x}dx,
\end{align*}
which equals, by Lemma \ref{thm:conjugate-poisson}, 
\begin{align}\label{eq:gamma-nmn}
\int_0^\infty\mathbb{P}\left(P_{2,x}<n_2,\dotsc,P_{m,x}<n_m\right)\frac{x^{n_1-1}}{\Gamma(n_1)}e^{-x}dx,
\end{align}
for an independent vector $(P_{2,x},\dotsc,P_{m,x})$ of Poisson random variables with parameter $\frac{p_kx}{p_1}$, $k=2,\dotsc,m$. By \cite[Section 3.b.]{Sibuya64}, the expression \eqref{eq:gamma-nmn} is the desired probability.
\end{newenv}
\end{proof}
\new{A different, quite direct approach to the proof is to iteratively use partial integration on the tail probability 
\[\mathbb{P}\left(X_2>\frac{p_2}{p_1},\dotsc,X_m>\frac{p_m}{p_1}\right)\] in all variables. \detail{
\begin{align*}
\frac{1}{\Beta(n_1,\ldots,n_m)}\int_{\frac{p_2}{p_1}}^\infty\frac{\prod_{k=2}^m s_k^{n_{k}-1}}{\Big(1+\sum_{k=2}^m s_k\Big)^{\sum_{k=1}^m n_k}}ds_2
&=\frac{1}{\Beta(n_1,\ldots,n_m)}\frac{1}{\sum_{k=1}^m n_k-1}\frac{\big(\frac{p_2}{p_1}\big)^{n_2-1}\prod_{k=3}^m s_k^{n_{k}-1}}{\Big(1+\frac{p_2}{p_1}+\sum_{k=3}^m s_k\Big)^{\sum_{k=1}^m n_k-1}}\\
&\quad+\frac{1}{\Beta(n_1,\ldots,n_m)}\frac{n_2-1}{\sum_{k=1}^m n_k-1}\int_{\frac{p_2}{p_1}}^\infty\frac{s_2^{n_2-2}\prod_{k=3}^m s_k^{n_{k}-1}}{\Big(1+\sum_{k=2}^m s_k\Big)^{\sum_{k=1}^m n_k-1}}ds_2
\end{align*}
}%
}

\begin{remark}
\begin{enumerate}
\item With the notation from Lemma \ref{lem:gammarep} we can also write
\begin{align*}
\mathbb{P}(A_2\leq n_2-1,\dotsc,A_m\leq n_m-1)=\mathbb{P}\left(V_2>p_2,\dotsc,V_m>p_m\right).
\end{align*}
\item \label{it:conjugate-rem}
More conjugacy relations like those in Lemma \ref{thm:conjugate-poisson} and Lemma \ref{lem:gammarep} can be established between pairs of 
 probability distributions (appearing e.g.~in the context of conjugate priors, see \cite[Part I, Chapter 3]{RaiffaSchleifer} or \cite[Section 5.2]{Smith00}). A further concrete example is given by the uniform distribution $\mathcal{U}_{[0,b]}$ and the Pareto distribution with scale parameter $x_m$ (and the fixed shape $\alpha=1$).
\end{enumerate}
\end{remark}

\begin{newenv} 
Before answering Question \ref{question1} in the next section, we present a duality result: Instead of prescribing the advancing probabilities \(p_1,\ldots,p_m\) and ask for the winning probabilities  \(\pi_1,\ldots,\pi_m\),
one can prescribe the latter and ask for the former. That this is indeed always possible, and that the correspondence is a diffeomorphism, is the content
of the next theorem.

\begin{theorem}\label{thm:pi-bij}
Let \(\Delta_{m-1}\) denote the \(m-1\) dimensional simplex in \(\R^m\).
Then the function $\pi\colon \Delta_{m-1}\to\Delta_{m-1}, p=(p_1,\dotsc,p_m)\mapsto(\pi_1(p),\dotsc,\pi_m(p))$ is a bijection.
The restriction of  $\pi$ to the relative interior of \(\Delta_{m-1}\) is a diffeomorphism. 
\end{theorem}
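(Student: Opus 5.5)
The plan is to recast the winning probabilities as choice probabilities of a random-utility model, and then use monotonicity (gross substitutes) together with a properness/degree argument. By Proposition \ref{thm:winning-prob-neg}, Lemma \ref{lem:conjugate} and Lemma \ref{lem:gammarep},
\[
\pi_1(p)=\P\Big(\tfrac{G_k}{G_1}>\tfrac{p_k}{p_1},\ 2\le k\le m\Big)=\P\Big(\tfrac{G_1}{p_1}<\tfrac{G_k}{p_k},\ k\neq 1\Big),
\]
with $G_k\sim\operatorname{Gamma}(n_k,1)$ independent, and by symmetry the same holds for every $\pi_\ell$. Writing $Y_k:=\log G_k$ and $\theta_k:=\log p_k$, we get
\[
\pi_\ell(p)=\P\big(Y_\ell-\theta_\ell<Y_k-\theta_k\ \ \forall k\neq \ell\big).
\]
The $Y_k$ are independent with smooth, everywhere positive densities on $\R$. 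For $p$ on the boundary we use the convention $G_\ell/0=\infty$. This matches the game, since a player with $p_\ell=0$ never moves, so $\pi_\ell=0$ while the others play the reduced game. With this convention $\pi$ is continuous on $\Delta_{m-1}$ (dominated convergence). It also maps the relative interior into the relative interior and each face into the corresponding face, because $\pi_\ell>0$ iff $p_\ell>0$.

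\emph{Local diffeomorphism.} On the interior we parametrize by $\theta\in\R^m$ modulo the constant vector $1_{\R^m}$. Differentiating under the integral, we expect
\[
\partial_{\theta_k}\pi_\ell<0\quad (k\neq\ell).
\]
This holds because raising $\theta_k$ lowers $Y_k-\theta_k$, which strictly decreases the chance that $\ell$ attains the minimum; the event has positive-density boundary. Since $\sum_\ell\pi_\ell=1$ and $\pi$ is invariant under $\theta\mapsto\theta+c1_{\R^m}$, the Jacobian $J=(\partial_{\theta_k}\pi_\ell)$ has zero column sums and zero row sums. Hence $-J$ is the Laplacian of a complete graph with positive weights, which has kernel exactly $\R 1_{\R^m}$. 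So $J$ restricted to $1_{\R^m}^\perp$ is invertible onto $1_{\R^m}^\perp$, and $\pi$ is a local diffeomorphism on the open simplex.

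\emph{Injectivity.} Suppose $\pi(\theta)=\pi(\theta')$ with $\theta'-\theta$ not constant. Let $S$ be the set of indices where $\theta'_k-\theta_k$ attains its maximum $M$; then $S\neq\{1,\dots,m\}$. I would compare the events $\{\operatorname{argmin}_k(Y_k-\theta_k)\in S\}$ and $\{\operatorname{argmin}_k(Y_k-\theta'_k)\in S\}$. Shifting all $\theta'$ down by $M$, the $S$-coordinates are unchanged while the others strictly increase (in the sense that $Y_k-\theta_k$ for $k\notin S$ strictly grows). Therefore the second event contains the first, and the difference has positive probability by positivity of the densities. This gives $\sum_{\ell\in S}\pi_\ell(\theta')>\sum_{\ell\in S}\pi_\ell(\theta)$, a contradiction.

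\emph{Surjectivity and the boundary.} I would argue by induction on $m$; the case $m=1$ is trivial. On each proper face, $\pi$ restricted to that face is the map for the reduced game with fewer players. By induction it is a bijection of the face onto itself, so $\pi$ is a bijection of $\partial\Delta_{m-1}$. On the interior, the map is continuous, injective and a local homeomorphism, so its image is open. The image is also closed in the open simplex by properness: if $p^{(j)}\to p^*\in\partial\Delta_{m-1}$ then $\pi(p^{(j)})\to\pi(p^*)\in\partial\Delta_{m-1}$ by continuity. By connectedness the image is the whole open simplex. Combining interior and boundary gives a bijection of $\Delta_{m-1}$, and inverse function theory gives the diffeomorphism statement. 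The step I expect to need the most care is the rigorous justification of continuity at the boundary (the convention $p_\ell=0$) and of the strict-sign Jacobian computation. The latter I would do explicitly as
\[
\partial_{\theta_k}\pi_\ell=-\int f_{Y_\ell}(y+\theta_\ell)\,f_{Y_k}(y+\theta_k)\prod_{j\neq k,\ell}\P\big(Y_j-\theta_j>y\big)\,dy<0 .
\]
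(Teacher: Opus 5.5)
Your proof is correct, and for the global part it takes a different route from the paper. The paper passes to ratio coordinates $x_k=p_k/p_1$ and writes $\tilde\pi_\ell$ as inverted-Dirichlet tail integrals. It then shows that $J\tilde\pi(x)$ is a Z-matrix with $J\tilde\pi(x)x^T>0$, hence invertible by an M-matrix criterion. Global invertibility then comes in one stroke from Hadamard's global inverse function theorem, using that the boundary is mapped into the boundary.

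Your local step has the same structure, written in log coordinates. With $\theta_1$ fixed one has $x_k\partial_{x_k}=\partial_{\theta_k}$, so the paper's condition $J\tilde\pi(x)x^T>0$ is exactly $-\partial_{\theta_1}\pi_\ell>0$ combined with your zero row sums. Moreover, $J\tilde\pi$ is, up to a positive diagonal factor, your $J$ with the first row and column deleted. One small slip: since the off-diagonal entries are negative and the rows sum to zero, it is $J$ itself, with positive diagonal, that is the weighted Laplacian, not $-J$. The kernel conclusion is unaffected, and your explicit formula even shows that $J$ is symmetric.

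The genuine difference is in the global step. You prove injectivity directly by the comparison argument on the index set $S$ where $\theta'-\theta$ is maximal, and this needs no derivative information at all. You get surjectivity from openness of the image, properness via continuity up to the boundary, and connectedness. The boundary itself you handle by an explicit induction over faces.

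What your route buys:
\begin{itemize}
\item It is self-contained, with no appeal to Hadamard's theorem or M-matrix theory.
\item It treats the boundary more carefully than the paper. The paper's map $\Psi$ degenerates on the face $p_1=0$, and its argument for the faces is only sketched.
\item The representation $\pi_\ell=\P\big(G_\ell/p_\ell<G_k/p_k\ \forall k\neq\ell\big)$ treats all players symmetrically instead of singling out player 1.
\end{itemize}
In exchange, the paper's route is shorter once the cited global inverse function theorem and Z-matrix criterion are accepted.
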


\begin{proof}
By Proposition \ref{thm:winning-prob-neg}, Lemma \ref{lem:conjugate}, and the definition of the inverted Dirichlet distribution, 
$$\pi_\ell(p) = \frac{1}{\Beta(n_1,\dotsc,n_m)}\int_{\frac{p_1}{p_\ell}}^\infty\cdots\int_{\frac{p_{\ell-1}}{p_\ell}}^\infty\int_{\frac{p_{\ell+1}}{p_\ell}}^\infty\cdots\int_{\frac{p_m}{p_\ell}}^\infty\frac{\prod_{k\neq \ell}s^{n_k-1}}{\Big(1+\sum_{k\neq \ell}^ms_k\Big)^{\sum_{k=1}^mn_k}}ds_1\cdots ds_m\,.$$
We want to rewrite this in a form more suitable for differentiation:
Consider the function $\Psi:\Delta_{m-1}\to[0,\infty]^{m-1}, p\mapsto\Big(\frac{p_2}{p_1},\dotsc,\frac{p_m}{p_1}\Big)$, which is a bijection and  is even diffeomorphic between the respective interiors. Its inverse is given by 
\[\Psi^{-1}(x_2,\ldots,x_m)=\left(\frac{1}{1+x_2+\ldots+x_m},\frac{x_2}{1+x_2+\ldots+x_m},\ldots,\frac{x_m}{1+x_2+\ldots+x_m}\right).\]

Define \(\tilde {\pi}\colon [0,\infty]^{m-1}\to \Delta_{m-1}\) by 
\(
\tilde {\pi}:=\pi\circ \Psi^{-1}.
\)
Then \(\Psi\circ \tilde\pi\) is a function \([0,\infty]^{m-1}\to [0,\infty]^{m-1}\). 
From the Hadamard global inverse function theorem (see \cite[Theorem 2.2]{ruzhansky2015}), it follows that if   \(\Psi\circ \tilde\pi\)
 has invertible Jacobian and maps the boundary of its domain into itself, then it is a diffeomorphism of the interior of its domain.

But since \(\Psi\) is a diffeomorphism, \(\Psi\circ \tilde\pi\) has invertible Jacobian if and only if \(\tilde \pi\) has, and  
\(\pi\) is a diffeomorphism if and only if \(\Psi\circ \tilde\pi\) is. Moreover, \(\Psi\circ \tilde\pi\) maps the boundary of its domain to the boundary,
since this is true for \(\pi\).

Thus it remains to show that $\tilde{\pi}$ has invertible Jacobian everywhere in the interior of its domain. 
For $2\leq \ell\leq m$ we get
\begin{align*}\scriptstyle
\tilde{\pi}_\ell(x) = \frac{1}{\Beta(n_1,\dotsc,n_m)}\int_{\frac{1}{x_\ell}}^\infty\int_{\frac{x_2}{x_\ell}}^\infty\cdots\int_{\frac{x_{\ell-1}}{x_\ell}}^\infty\int_{\frac{x_{\ell+1}}{x_\ell}}^\infty\cdots\int_{\frac{x_m}{x_\ell}}^\infty\frac{\prod_{k\neq \ell}s_k^{n_k-1}}{\Big(1+\sum_{k\neq \ell}^ms_k\Big)^{\sum_{k=1}^mn_k}}ds_1\cdots ds_{\ell-1}ds_{\ell+1}\cdots ds_m\,.
\end{align*}
Taking partial derivatives gives
\begin{align*}
\frac{\partial\tilde{\pi}_\ell(x)}{\partial x_\ell}=\sum_{\substack{1\leq j\leq m\\j\neq \ell}}\frac{x_j}{x_\ell^2}\cdot\frac{\Big(\frac{x_j}{x_\ell}\Big)^{n_j-1}}{\Big(1+\frac{x_j}{x_\ell}\Big)^{n_j+n_\ell}}\cdot I(j,\ell), \text{ and } \frac{\partial\tilde{\pi}_\ell(x)}{\partial x_k}=-\frac{1}{x_\ell}\cdot\frac{\Big(\frac{x_k}{x_\ell}\Big)^{n_k-1}}{\Big(1+\frac{x_k}{x_\ell}\Big)^{n_k+n_\ell}}\cdot I(k,\ell),
\end{align*}
for $2\leq k\leq m, k\neq \ell$, where $x_1:=1$ and where for \(1\le j\le m\)
\begin{align*}
 I(j,\ell):=\frac{1}{\Beta(n_1,\dotsc,n_m)}\!\underbrace{\int_{\frac{x_1}{x_\ell}}^\infty\cdots\int_{\frac{x_m}{x_\ell}}^\infty}_{\text{except }\frac{x_j}{x_\ell},\frac{x_\ell}{x_\ell}}\!\frac{\prod_{i\notin\{j,\ell\}}t_i^{n_i-1}}{\Big(1+\sum_{i\notin\{j,\ell\}}t_i\Big)^{\sum_{i=1}^mn_i}}\underbrace{dt_1\cdots dt_m}_{\text{except }dt_j,dt_\ell}\,.
\end{align*}
Therefore, in the interior of $[0,\infty]^{m-1}$, the Jacobian $J\tilde{\pi}(x)$ has a positive diagonal and all other entries are negative. It is therefore a Z-matrix \cite[Chapter 6 and 10]{BerPlem87}. Moreover, multiplying the Jacobian with the (positive) column vector $x^T$, we obtain for $2\leq \ell\leq m$,
\begin{align*}
\big(J\tilde{\pi}(x)x^T\big)_\ell=\sum_{k=2}^m x_k\frac{\partial\tilde{\pi}_\ell(x)}{\partial x_k} = \frac{1}{x_\ell}\cdot\frac{\Big(\frac{1}{x_\ell}\Big)^{n_\ell-1}}{\Big(1+\frac{1}{x_\ell}\Big)^{n_1+n_\ell}}\cdot I(1,\ell)>0\,.
\end{align*}
Therefore, $J\tilde{\pi}(x)x^T$ has positive entries, and so, following results on properties of Z-matrices \cite[Chapter 6, criterion I$_{27}$]{BerPlem87} or \cite[criterion K$_{33}$]{Plemmons77}, $J\tilde{\pi}$ is invertible in $(0,\infty)^{m-1}$.

Finally, that \(\pi\) is also a bijection on the faces of $\Delta_{m-1}$
follows from the same arguments and Remark \ref{rem:idir}\eqref{it:idir-marg}, since setting 
\(p_\ell=0\) implies \(\pi_\ell=0\).
\end{proof}

In particular, Theorem \ref{thm:pi-bij} grants the existence and uniqueness of an `equal probability vector' $\bar{p}\in \Delta_{m-1}$ with 
$\pi(\bar p)=(\tfrac{1}{m},\dotsc,\tfrac{1}{m})$ for all $m\geq 2$. 

For the case \(m=2\), we can give estimates on the components of \(\bar p\) using \cite[Theorem 3.7]{goeb}: 
For \(1=n_1\le n_2\) we get the exact value \(\bar p_1=1-2^{-\frac{1}{n_2}}\),
for \(2\le n_1\le n_2\) we get the bounds 
\(\frac{n_1-1}{n_1+n_2-1}< \bar p_1 <\frac{n_1}{n_1+n_2}\), and accordingly, \(\frac{n_2}{n_1+n_2}< \bar p_2 <\frac{n_2}{n_1+n_2-1}\). 
Note that for the bounds on \(\bar p_1\) we have the following:
\begin{itemize}
\item If we set \(p_1(n_1,n_2)=\frac{n_1}{n_1+n_2}\), then the function \(n_1\mapsto \pi_1\big(p_1(n_1,n_2),1-p_1(n_1,n_2)\big)\) is decreasing on \(\N_{\ge 1}\), by our results in Section \ref{sec:twoplayers}.
\item If we set \(p_1(n_1,n_2)=\frac{n_1-1}{n_1+n_2-1}\), then the function \(n_1\mapsto \pi_1\big(p_1(n_1,n_2),1-p_1(n_1,n_2)\big)\) is increasing on \(\N_{\ge 1}\),
which can be seen from \cite[Theorem 2]{Hornik24}.
\end{itemize} 
\end{newenv}

For most of the remainder of this article we will specialize to the setting \(p_\ell:=\frac{n_\ell}{n_1+\ldots+n_m}\), and we 
regard the winning probabilities \(\pi_1,\ldots,\pi_m\) as functions of \(n_1,\ldots,n_m\).

\section{Winning the game}\label{sec:winning}

We formulate a theorem summarizing the results of Section \ref{sec:gprob} for the purpose of studying the class of games described in the
introduction. 

\begin{theorem}\label{thm:rep-pi1}
Let \(m\ge 2\).  For all \(n_1,\ldots,n_m\in \N_{\ge 1}\) consider
\[
\pi_1(n_1,\ldots,n_m)=\mathbb{P}(A_2\leq n_2-1,\dotsc,A_m\leq n_m-1) 
\]
where $(A_2,\dotsc,A_m)$ has a negative multinomial distribution with parameters $(n_1;p_2,\dotsc,p_m)$, 
where \(p_\ell=\frac{n_\ell}{n_1+\ldots,n_m}\) for \(\ell\in\{2,\ldots,m\}\).

Then we have the following representations:
\begin{enumerate}
\item \label{it:rep-pi1-1}
\(\pi_1(n_1,\ldots,n_m)=\mathbb{P}\left(X_2>\frac{n_2}{n_1},\dotsc,X_m>\frac{n_m}{n_1}\right)\)\\
where $(X_2,\dotsc,X_m)\sim \operatorname{ID}(n_2,\dotsc,n_m;n_1)$.
\item \label{it:rep-pi1-2}\(\pi_1(n_1,\ldots,n_m)=\mathbb{P}\left(\frac{G_2}{n_2} > \frac{G_1}{n_1},\dotsc,\frac{G_m}{n_m} > \frac{G_1}{n_1}\right)\)\\
where \(G_1,\ldots,G_m\) are independent random variables with \(G_j\sim \operatorname{Gamma}(n_j,1)\), \(j=1,\ldots,m\).
\end{enumerate}
\end{theorem}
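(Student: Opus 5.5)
The plan is to specialize the general results of Section \ref{sec:gprob} to the advancing probabilities \(p_\ell=\frac{n_\ell}{n_1+\dotsb+n_m}\). The first identity \(\pi_1(n_1,\ldots,n_m)=\mathbb{P}(A_2\le n_2-1,\dotsc,A_m\le n_m-1)\) is exactly Proposition \ref{thm:winning-prob-neg}. Note that \(p_1=1-\sum_{k\ge2}p_k=\frac{n_1}{n_1+\dotsb+n_m}\), which lies in \((0,1)\) because all \(n_\ell\ge 1\) and \(m\ge 2\). So the hypotheses of Lemma \ref{lem:conjugate} are met.

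For item \eqref{it:rep-pi1-1}, I will apply Lemma \ref{lem:conjugate} with these \(p_\ell\). It gives
\[
\pi_1=\mathbb{P}\Big(X_2>\tfrac{p_2}{p_1},\dotsc,X_m>\tfrac{p_m}{p_1}\Big),
\]
with \((X_2,\dotsc,X_m)\sim\operatorname{ID}(n_2,\dotsc,n_m;n_1)\). The normalizing sums cancel in the ratios, so \(\frac{p_k}{p_1}=\frac{n_k}{n_1}\), and this yields the first representation directly.

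For item \eqref{it:rep-pi1-2}, I will invoke the first part of Lemma \ref{lem:gammarep}. For independent \(G_j\sim\operatorname{Gamma}(n_j,1)\), the vector \((G_2/G_1,\dotsc,G_m/G_1)\) has law \(\operatorname{ID}(n_2,\dotsc,n_m;n_1)\). Hence the probability in \eqref{it:rep-pi1-1} equals \(\mathbb{P}(G_k/G_1>n_k/n_1,\ 2\le k\le m)\). Since \(G_1>0\) almost surely and \(n_k>0\), each event \(\{G_k/G_1>n_k/n_1\}\) coincides with \(\{G_k/n_k>G_1/n_1\}\). This gives the second representation.

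There is no real obstacle here; the work was already done in Lemmas \ref{lem:gammarep} and \ref{lem:conjugate}. The only points to check are that the parameter identification \(p_k/p_1=n_k/n_1\) is correct, and that dividing by \(G_1\) and by \(n_k\) is legitimate: \(G_1\) is almost surely positive and the \(n_k\) are positive, so the inequalities are preserved.
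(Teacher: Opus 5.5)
Your proposal is correct and follows the paper's proof: item (1) comes from Lemma~\ref{lem:conjugate} with \(p_k/p_1=n_k/n_1\), and item (2) then follows from (1) via the ratio representation in Lemma~\ref{lem:gammarep}. Your additional checks (\(p_1\in(0,1)\), \(G_1>0\) almost surely, and the equivalence \(G_k/G_1>n_k/n_1 \iff G_k/n_k>G_1/n_1\)) spell out details that the paper leaves implicit.
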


\begin{proof} (1) follows immediately from Lemma \ref{lem:conjugate}, (2) follows from (1) and Lemma \ref{lem:gammarep}.\end{proof}

We are ready to answer Question \ref{question1}, that is, 
we will show that, given goals \(n_1,\ldots,n_m\) with respective advancement probabilities 
\(\frac{n_1}{n_1+\cdots+n_m},\ldots,\frac{n_m}{n_1+\cdots+n_m}\), the players with the lowest goal have the highest winning probabilities.
This follows from the stronger result that, for a given player, the probability of winning  increases if any of the other player's goal is increased.
By symmetry, it is enough to show this for player 1. 

\begin{theorem}
The function \(\pi_1\) defined in Theorem \ref{thm:rep-pi1} is increasing in \(n_2,\ldots,n_m\).
\end{theorem}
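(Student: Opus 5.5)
The plan is to work with the gamma representation of Theorem \ref{thm:rep-pi1}\eqref{it:rep-pi1-2} and to reduce the $m$-player statement to the two-player result of Section \ref{sec:twoplayers} by conditioning. By symmetry in the indices $2,\ldots,m$ it suffices to show that $\pi_1$ increases when $n_2$ is replaced by $n_2+1$, with all other goals fixed. Let $G_1,\ldots,G_m$ be independent with $G_j\sim\operatorname{Gamma}(n_j,1)$. Set
\[
C:=n_1\min_{3\le k\le m}\frac{G_k}{n_k}\in(0,\infty], \qquad C=\infty \text{ if } m=2 .
\]
Then $C$ is independent of $(G_1,G_2)$ and does not depend on $n_2$. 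Conditioning on $C$ gives $\pi_1(n_1,\ldots,n_m)=\E\big[h_{n_2}(C)\big]$, where
\[
h_{n}(c):=\P\Big(\frac{G_1}{n_1}<\frac{G^{(n)}}{n},\ G_1<c\Big)=\int_0^c f_{1}(x)\,Q_{n}\big(\tfrac{x}{n_1}\big)\,dx .
\]
Here $G^{(n)}\sim\operatorname{Gamma}(n,1)$ is independent of $G_1$, $f_1$ is the density of $G_1$, and $Q_n(y):=\P(G^{(n)}/n>y)$. It therefore suffices to prove $D(c):=h_{n+1}(c)-h_n(c)\ge 0$ for every $c\in(0,\infty]$, with strict inequality for all $c>0$.

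Next, I would study the integrand $d(x):=f_1(x)\big(Q_{n+1}(x/n_1)-Q_n(x/n_1)\big)$. The key claim is that $Q_{n+1}-Q_n$ changes sign exactly once on $(0,\infty)$, going from positive to negative. Both $G^{(n)}/n$ and $G^{(n+1)}/(n+1)$ have mean $1$, and the second is more concentrated. Let $g_n$ be the density of $G^{(n)}/n$. The ratio of densities is
\[
\frac{g_{n+1}(y)}{g_n(y)}\propto y\,e^{-y},
\]
which is unimodal, so the two densities cross exactly twice. By the variation-diminishing property, or by a direct argument using that the difference of the CDFs vanishes at $0$ and $\infty$ and that the means agree, the survival functions then cross exactly once. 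Near $0$ we have $Q_{n+1}>Q_n$, because $G^{(n+1)}/(n+1)$ puts less mass near $0$. Consequently $d$ is positive on $(0,x^*)$ and negative on $(x^*,\infty)$ for some $x^*$.

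From this single sign change it follows that $D(c)=\int_0^c d(x)\,dx$ is strictly increasing on $(0,x^*)$ and strictly decreasing on $(x^*,\infty)$. Hence on $(0,\infty]$, $D(c)>\min\{D(0),D(\infty)\}$, where $D(0)=0$ and $D(\infty)=\pi_1(n_1,n+1)-\pi_1(n_1,n)$ is the two-player increment. The two-player increment is strictly positive: the third proof in Section \ref{sec:twoplayers}, via \cite{Hornik24}, gives monotonicity of $n_2\mapsto I_{\frac{n_1}{n_1+n_2}}(n_1,n_2)$, and strictness is the special case of Lemma \ref{th:inc-reg-beta-dec} mentioned in the remarks there. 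So $D(c)>0$ for all $c\in(0,\infty]$. Taking expectations over $C$ yields $\pi_1(n_1,n_2+1,n_3,\ldots,n_m)>\pi_1(n_1,\ldots,n_m)$, and the same argument applies to each $n_k$, $k\ge 3$.

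The main obstacle I expect is making the single-crossing claim for $Q_{n+1}-Q_n$ rigorous. I would first try to derive it from the two-crossing property of the densities together with equal means, arguing that $\int_0^y (g_{n+1}-g_n)$ can have at most one interior zero. If that becomes delicate, the fallback is to express $Q_n$ through the Poisson identity of Lemma \ref{thm:conjugate-poisson} and compare the two sides directly.
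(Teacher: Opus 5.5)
Your argument is correct, and it takes a genuinely different route from the paper.

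The paper disintegrates over $G_1$ and expands the survival functions of $G_2,\ldots,G_{m-1}$ into Poisson sums. This writes $\pi_1$ as a positive combination, with coefficients not depending on $n_m$, of terms $I_{\frac{n_1+N}{n_1+N+n_m}}(n_1+K,n_m)$ with $0\le K\le N$. The real work is then Lemma~\ref{th:inc-reg-beta-dec}, a monotonicity result for the incomplete beta function with the extra parameters $K,N$, proved by partial integration and elementary inequalities.

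You instead condition on the threshold $C=n_1\min_{k\ge 3}G_k/n_k$ set by the remaining players. This turns $\pi_1$ into a mixture of truncated two-player quantities $h_{n_2}(c)$. The single crossing of $Q_{n+1}-Q_n$ (a convex-order comparison of $G^{(n+1)}/(n+1)$ and $G^{(n)}/n$) then makes $c\mapsto D(c)$ unimodal, so positivity need only be checked at $c=0$ and $c=\infty$. Hence you only need the two-player case, i.e.\ the instance $K=N=0$ of Lemma~\ref{th:inc-reg-beta-dec}. For $m\ge 3$ even the non-strict two-player monotonicity from \cite{Hornik24} suffices: $C<\infty$ almost surely, and $D(\infty)\ge 0$ already forces $D>0$ on $(0,\infty)$.

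The step you flag as the main obstacle is easy. The ratio $g_{n+1}/g_n=(1+\frac1n)^{n+1}ye^{-y}$ exceeds $1$ at $y=1$ and tends to $0$ at both ends, so $g_{n+1}-g_n$ has sign pattern $-,+,-$. Since $Q_n-Q_{n+1}$ vanishes at $0$ and at $\infty$, it must be negative and then positive, with exactly one zero. Equality of the means is not even needed.

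Your route explains the $m$-player monotonicity as a formal consequence of the two-player one. The paper's route is a self-contained computation that also yields the more general incomplete beta inequality, which the authors present as a contribution in its own right.
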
  

\begin{proof}
In order to show that the winning probability \(\pi_1(n_1,\ldots,n_m)\) of player $1$ increases if either of the $n_2,\dotsc,n_m$ increases, it suffices to show this monotonicity for $n_m$ only. For the other arguments this follows by symmetry. 
By Theorem \ref{thm:rep-pi1}
\begin{align*}
\pi_1&=\mathbb{P}\left(\frac{G_2}{n_2} > \frac{G_1}{n_1},\dotsc,\frac{G_m}{n_m} > \frac{G_1}{n_1}\right),
\end{align*}
where $(G_1,\dotsc,G_m)$ is a vector of independent random variables with $G_k\sim\operatorname{Gamma}(n_k,1)$ 
and where we
suppressed the arguments of \(\pi_1\) for brevity. 
Using disintegration for \(G_1\) and independence of \(G_2,\ldots,G_m\)  yields
\begin{align*}
\pi_1
&=\int_{0}^\infty \mathbb {P}\left(\frac{G_2}{n_2} > \frac{s}{n_1},\dotsc,\frac{G_m}{n_m} > \frac{s}{n_1}\right)\frac{s^{n_1-1}e^{-s}}{\Gamma(n_1)}ds\\
&=\int_{0}^\infty \prod_{k=2}^m\mathbb{P}\left(\frac{G_k}{n_k} > \frac{s}{n_1}\right)\frac{s^{n_1-1}e^{-s}}{\Gamma(n_1)}ds.
\end{align*}
Change of variables \(x=\frac{s}{n_1}\) gives 
\begin{align}
\label{eq:Pdisint}
\pi_1
&=\int_{0}^\infty \prod_{k=2}^m\mathbb{P}\left(\frac{G_k}{n_k} >  x\right)n_1^{n_1}\frac{x^{n_1-1}e^{-n_1 x}}{\Gamma(n_1)}dx.
\end{align}
Next, using Lemma \ref{thm:conjugate-poisson}, we express  for $2\leq \ell\leq m-1$ the probabilities $\mathbb{P}\left(\frac{G_\ell}{n_\ell} >   x\right)=\mathbb{P}\left(G_\ell >  n_\ell x\right)$ of the $\operatorname{Gamma}(n_\ell,1)$-distributed random variables $G_\ell$ with the help of  conjugate $\operatorname{Pois}(n_\ell x)$-distributed random variables $P_\ell$, that is,
\begin{align*}
\mathbb{P}\left(G_\ell >  n_\ell x\right)=\mathbb{P}(P_\ell<n_\ell)=e^{-n_\ell x}\sum_{k_\ell=0}^{n_\ell-1}\frac{(n_\ell x)^{k_\ell}}{k_\ell!}.
\end{align*}
Inserting this into \eqref{eq:Pdisint} and pulling constants out of the integral, we get for the winning probability of the first player,
\begin{align*}
\pi_1&=\frac{n_1^{n_1}}{\Gamma(n_1)}\sum_{k_2=0}^{n_2-1}\cdots\!\!\!\sum_{k_{m-1}=0}^{n_{m-1}-1}\Big(\prod_{\ell=2}^{m-1}\frac{ n_\ell^{k_\ell}}{k_\ell!}\Big)\int_0^\infty\mathbb{P}\left(G_m >  n_mx\right)x^{n_1+\sum_{\ell=2}^{m-1} k_\ell-1}e^{-(\sum_{\ell=1}^{m-1} n_\ell)x}dx.
\end{align*}
Next, we set \(k_1:=n_1\), such that the sum \(n_1+\sum_{\ell=2}^{m-1} k_\ell\) abbreviates to \(\sum_{\ell=1}^{m-1} k_\ell\), and we use change of variables to write
\begin{align}
\nonumber\pi_1&=\frac{n_1^{n_1}}{\Gamma(n_1)}\sum_{k_2=0}^{n_2-1}\cdots\!\!\!\sum_{k_{m-1}=0}^{n_{m-1}-1}\Big(\prod_{\ell=2}^{m-1}\frac{ n_\ell^{k_\ell}}{k_\ell!}\Big)\int_0^\infty\mathbb{P}\left(G_m >  \frac{n_my}{\sum_{\ell=1}^{m-1} n_\ell}\right)\frac{y^{\sum_{\ell=1}^{m-1} k_\ell-1}}{(\sum_{\ell=1}^{m-1} n_\ell)^{\sum_{\ell=1}^{m-1} k_\ell}}e^{-y}dy\\
\label{eqn:winning-sum-prod}&=\frac{n_1^{n_1}}{\Gamma(n_1)}\sum_{k_2=0}^{n_2-1}\cdots\!\!\!\sum_{k_{m-1}=0}^{n_{m-1}-1}\Big(\prod_{\ell=2}^{m-1}\frac{ n_\ell^{k_\ell}}{k_\ell!}\Big)\frac{\Gamma\Big(\sum_{\ell=1}^{m-1} k_\ell\Big)}{\Big(\sum_{\ell=1}^{m-1} n_\ell\Big)^{\sum_{\ell=1}^{m-1} k_\ell}}\mathbb{P}\left(G_m >  \frac{n_mH_k}{\sum_{\ell=1}^{m-1} n_\ell}\right),
\end{align}
where $H_k\sim\operatorname{Gamma}\Big(\sum_{\ell=1}^{m-1} k_\ell,1\Big)$, \(k=(k_1,\ldots,k_{m-1})\), 
with \(k_\ell\in \{0,\ldots, n_\ell-1\}\) and 
every $H_k$ is independent of $G_m$. Hence the quotient $\frac{G_m}{H_k}$ is $\operatorname{ID}\Big(n_m;\sum_{\ell=1}^{m-1} k_\ell\Big)$-distributed and we calculate 
 \begin{align}
 \nonumber&\mathbb{P}\left(G_m >  \frac{n_mH_k}{\sum_{\ell=1}^{m-1} n_\ell}\right)
 =\mathbb{P}\left(\frac{G_m}{H_k} >  \frac{n_m}{\sum_{\ell=1}^{m-1} n_\ell}\right)\\
 \nonumber &=\frac{\Gamma\Big(\sum_{\ell=1}^{m-1} k_\ell+n_m\Big)}{\Gamma\Big(\sum_{\ell=1}^{m-1} k_\ell\Big)\Gamma(n_m)}\int_{\frac{n_m}{\sum_{\ell=1}^{m-1} n_\ell}}^\infty\frac{s^{n_m-1}}{(1+s)^{\sum_{\ell=1}^{m-1} k_\ell+n_m}}ds \\
\label{eqn:pre-lemma-beta} &= I_{\frac{1}{1+\frac{n_m}{\sum_{\ell=1}^{m-1} n_\ell}}}\Big(\sum_{\ell=1}^{m-1} k_\ell,n_m\Big)= I_{\frac{\sum_{\ell=1}^{m-1} n_\ell}{\sum_{\ell=1}^m n_\ell}}\Big(\sum_{\ell=1}^{m-1} k_\ell,n_m\Big),
 \end{align}
 where $I$ denotes the incomplete regularized beta function. If we can show that this expression increases in \(n_m\), then
  it follows that \(\pi_1\) is increasing (cf.~\eqref{eqn:winning-sum-prod}).
 The fact that the last term in \eqref{eqn:pre-lemma-beta}  increases in \(n_m\) follows from the subsequent Lemma \ref{th:inc-reg-beta-dec} by setting \(N=\sum_{\ell=2}^{m-1} n_\ell\) and \(K=\sum_{\ell=2}^{m-1} k_\ell\). 
\end{proof}

\begin{lemma}\label{th:inc-reg-beta-dec}
Let $n_1\in (0,\infty)$  and let $0\leq K\leq N$ be real numbers.  Then the function \(\N_{\ge 1}\to \R\)
\begin{align*}
n_m\mapsto I_{\frac{n_1+N}{n_1+N+n_m}}(n_1+K,n_m)
\end{align*}
is strictly increasing.
\end{lemma}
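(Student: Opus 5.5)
We set \(a:=n_1+N\) and \(\alpha:=n_1+K\), so that \(0<\alpha\le a\), and write \(f_\alpha(n):=I_{\frac{a}{a+n}}(\alpha,n)\). By the gamma representation behind Lemma \ref{lem:gammarep} and Remark \ref{rem:idir}, \(f_\alpha(n)=\P\big(\tfrac{G_n}{n}>\tfrac{G_\alpha}{a}\big)\) with \(G_n\sim\operatorname{Gamma}(n,1)\) and \(G_\alpha\sim\operatorname{Gamma}(\alpha,1)\) independent. This is the same computation as \eqref{eqn:pre-lemma-beta}, read backwards. We condition on \(G_\alpha/a=t\) and use Lemma \ref{thm:conjugate-poisson}. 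This gives
\[
f_\alpha(n+1)-f_\alpha(n)=\int_0^\infty d_n(t)\,w_\alpha(t)\,dt,\qquad d_n(t):=\P\big(\operatorname{Pois}((n+1)t)\le n\big)-\P\big(\operatorname{Pois}(nt)\le n-1\big),
\]
where \(w_\alpha(t)=\frac{a^\alpha}{\Gamma(\alpha)}t^{\alpha-1}e^{-at}\) is the density of \(G_\alpha/a\). It therefore suffices to show that this integral is positive for every integer \(n\ge1\).

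\textbf{Step 1: \(d_n\) changes sign exactly once.} Differentiating the Poisson tails (each derivative is a single Poisson mass term) gives
\[
d_n'(t)=\frac{n^n t^{n-1}e^{-nt}}{(n-1)!}\Big(1-c_n\,t e^{-t}\Big),\qquad c_n=\frac{(n+1)^{n+1}}{n^{n+1}}.
\]
Since \(te^{-t}\) is unimodal, \(d_n'\) is positive, then possibly negative, then positive again. Moreover \(d_n(0)=0\) and \(d_n(t)\to0\) as \(t\to\infty\). Hence \(d_n\) first increases from \(0\), then decreases below \(0\), then increases back to \(0\). So there is a \(t_0\) with \(d_n>0\) on \((0,t_0)\) and \(d_n<0\) on \((t_0,\infty)\). (The middle phase must occur, since otherwise \(d_n\) could not return to \(0\) while staying positive.)

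\textbf{Step 2: reduction to the diagonal case \(\alpha=a\).} The ratio \(w_\alpha/w_a\) is proportional to \(t^{\alpha-a}\), which is nonincreasing because \(\alpha\le a\), and strictly decreasing if \(\alpha<a\). The standard single-crossing argument then applies:
\[
\int d_n w_\alpha=\int d_n\frac{w_\alpha}{w_a}w_a\ \ge\ \frac{w_\alpha}{w_a}(t_0)\int d_n w_a .
\]
This holds because on \((0,t_0)\) we have \(d_n\ge0\) and the ratio is at least its value at \(t_0\), while on \((t_0,\infty)\) we have \(d_n\le0\) and the ratio is at most that value. The inequality is strict when \(\alpha<a\). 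So everything reduces to showing \(\int d_n w_a\ge0\), with strictness in the case \(\alpha=a\). In other words, we must show that \(n\mapsto \P(G_n/n>G_a/a)=I_{\frac a{a+n}}(a,n)\) is nondecreasing over the integers for every real \(a>0\), and strictly increasing.

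\textbf{Step 3 (main obstacle): the diagonal case.} For integer \(a\) this is essentially the two-player statement, namely the monotonicity in \cite{Hornik24} combined with symmetry. For real \(a\) I would first try to run the same single-crossing device in the other variable. By symmetry, \(\P(G_n/n>G_a/a)=1-\P(G_a/a>G_n/n)\). Conditioning instead on \(G_n/n\) with \(n\) fixed, one can compare parameters \(a\) and \(a'\) through a crossing function built from Gamma tails in the continuous shape. The hope is to reduce to the boundary behaviour as \(a\to0\) or \(a\to\infty\), where the limits are explicit: they are \(\P(G_n>0)\) and the Poisson median bound \(\P(\operatorname{Pois}(n)\le n-1)\) versus \(\P(\operatorname{Pois}(n+1)\le n)\). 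If that fails, I would fall back on a direct estimate of \(\int d_n w_a\) using the explicit antiderivative obtained by integrating \(d_n'\) against \(\int_t^\infty w_a\). This step is where I expect the real work to lie.
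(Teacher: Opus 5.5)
Your Steps 1 and 2 are correct. $d_n$ has exactly one sign change; the middle phase exists since $c_n/e=(1+1/n)^{n+1}/e>1$. Because $w_\alpha/w_a\propto t^{\alpha-a}$ is nonincreasing, $\int d_n w_\alpha\ge\frac{w_\alpha}{w_a}(t_0)\int d_n w_a$, strictly when $\alpha<a$.

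But notice what has been reduced to what. The diagonal case $\alpha=a$ is the lemma itself with $K=N$. After renaming $n_1+N$ as $n_1$, it is exactly the case $K=N=0$: strict increase of $n\mapsto I_{a/(a+n)}(a,n)$ on $\N_{\ge1}$ for every real $a>0$. This is the entire substance of the lemma, and Step 3 does not prove it. It lists strategies without carrying any out. The explicit limits as $a\to0$ and $a\to\infty$ say nothing about intermediate $a$ unless you also have some monotonicity in $a$, which you do not.

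Your integer remark does not close the gap either, for three reasons:
\begin{itemize}
\item In the lemma, $a=n_1+N$ is an arbitrary positive real.
\item \cite{Hornik24} is quoted in Section~\ref{sec:twoplayers} only as giving non-strict monotonicity, so strictness at $K=N$ would remain open.
\item The strict two-player monotonicity stated in the paper is itself deduced from this lemma, so it cannot be borrowed.
\end{itemize}

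The missing idea is to compare $f_\alpha(n+1)$ and $f_\alpha(n)$ directly on the inverted-beta representation, as the paper does:
\[
f_\alpha(n)=\frac{\Gamma(\alpha+n)}{\Gamma(\alpha)\Gamma(n)}\int_{n/a}^\infty\frac{s^{n-1}}{(1+s)^{\alpha+n}}\,ds .
\]
One integration by parts in the integral for $f_\alpha(n)$ turns $f_\alpha(n+1)>f_\alpha(n)$ into
\[
\frac{(n/a)^n}{n(1+n/a)^{\alpha+n}}>\frac{\alpha+n}{n}\int_{n/a}^{(n+1)/a}\frac{s^n}{(1+s)^{\alpha+n+1}}\,ds .
\]
The integral now runs only over an interval of length $1/a$. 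Bound it strictly by $1/a$ times the maximum of the integrand, attained at some $s=(n+h)/a$ with $h\in[0,1]$. It then suffices to show
\[
\Big(1+\frac{h}{a+n}\Big)^{\alpha+n}\,\frac{a+n+h}{\alpha+n}\ \ge\ \Big(1+\frac{h}{n}\Big)^{n}\qquad(0\le h\le 1).
\]
The left side is nonincreasing in $\alpha$, because $\log\big(1+\frac{h}{a+n}\big)\le\frac{1}{a+n}\le\frac{1}{\alpha+n}$. So $\alpha=a$ is again the worst case: this is your Step 2 reduction, but performed on a pointwise inequality. At $\alpha=a$ the claim is immediate from $a+n\ge n$ and the monotonicity of $r\mapsto(1+h/r)^r$. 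This argument handles every $0<\alpha\le a$ at once, including strictness. With it, your Steps 1--2, while valid, become unnecessary.
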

\begin{proof} We want to show that for a given \(n_m\in \N_{\ge 1}\)
\[
I_{\frac{n_1+N}{n_1+N+n_m+1}}(n_1+K,n_m+1)>I_{\frac{n_1+N}{n_1+N+n_m}}(n_1+K,n_m).
\]
For this, we use the representation 
\[
I_{\frac{n_1+N}{n_1+N+n_m}}(n_1+K,n_m)= \frac{\Gamma(n_1+K+n_m)}{\Gamma(n_1+K)\Gamma(n_m)}\int_{\frac{n_m}{n_1+N}}^\infty\frac{s^{n_m-1}}{(1+s)^{n_1+K+n_m}}ds.
\]
After cancelling the terms involving the gamma function, 
we need to check that
\begin{align}
\label{eq:ineq-lemma}\frac{n_1+K+n_m}{n_m}\int_{\frac{n_m+1}{n_1+N}}^\infty\frac{s^{n_m}}{(1+s)^{n_1+K+n_m+1}}ds
>\int_{\frac{n_m}{n_1+N}}^\infty\frac{s^{n_m-1}}{(1+s)^{n_1+K+n_m}}ds.
\end{align}
Using partial integration on the right hand side, 
\detail{
\begin{align*}
\int_{\frac{n_m}{n_1+N}}^\infty s^{n_m-1}\frac{1}{(1+s)^{n_1+K+n_m}}ds
&=\frac{s^{n_m}}{n_m}\frac{1}{(1+s)^{n_1+K+n_m}}\Big|_{\frac{n_m}{n_1+N}}^\infty
+\int_{\frac{n_m}{n_1+N}}^\infty \frac{s^{n_m}}{n_m}\frac{n_1+K+n_m}{(1+s)^{n_1+K+n_m+1}}ds
\end{align*}
}
we get
\begin{align*}
&\frac{n_1+K+n_m}{n_m}\int_{\frac{n_m+1}{n_1+N}}^\infty\frac{s^{n_m}}{(1+s)^{n_1+K+n_m+1}}ds\\
&>-\frac{\Big(\frac{n_m}{n_1+N}\Big)^{n_m}}{n_m\Big(1+\frac{n_m}{n_1+N}\Big)^{n_1+K+n_m}}+\frac{n_1+K+n_m}{n_m}\int_{\frac{n_m}{n_1+N}}^\infty\frac{s^{n_m}}{(1+s)^{n_1+K+n_m+1}}ds,
\end{align*}
which is equivalent to
\begin{align*}
\frac{\Big(\frac{n_m}{n_1+N}\Big)^{n_m}}{n_m\Big(1+\frac{n_m}{n_1+N}\Big)^{n_1+K+n_m}}>\frac{n_1+K+n_m}{n_m}\int_{\frac{n_m}{n_1+N}}^{\frac{n_m+1}{n_1+N}}\frac{s^{n_m}}{(1+s)^{n_1+K+n_m+1}}ds.
\end{align*}
The right hand side can always be strictly estimated from above by 
\begin{align*}
\frac{n_1+K+n_m}{n_m}\cdot\frac{1}{n_1+N}\cdot\frac{\Big(\frac{n_m+h}{n_1+N}\Big)^{n_m}}{\Big(1+\frac{n_m+h}{n_1+N}\Big)^{n_1+K+n_m+1}},
\end{align*}
with some $0\leq h\leq 1$ depending on all other variables. \detail{Take the unique \[
h:=\operatorname{argmax}_{h'\in [0,1]} \frac{\Big(\frac{n_m+h'}{n_1+N}\Big)^{n_m}}{\Big(1+\frac{n_m+h'}{n_1+N}\Big)^{n_1+K+n_m+1}}
\] and note that the integrand is continuous and not constant.}
Hence, to show \eqref{eq:ineq-lemma}, it is sufficient to prove that for all $h\in [0,1]$,
\begin{align*}
\frac{\Big(\frac{n_m}{n_1+N}\Big)^{n_m}}{n_m\Big(1+\frac{n_m}{n_1+N}\Big)^{n_1+K+n_m}}\geq \frac{n_1+K+n_m}{n_m(n_1+N)}\cdot\frac{\Big(\frac{n_m+h}{n_1+N}\Big)^{n_m}}{\Big(1+\frac{n_m+h}{n_1+N}\Big)^{n_1+K+n_m+1}},
\end{align*}
or, simplified,
\begin{align*}
\frac{n_m^{n_m}}{\Big(\frac{n_1+N+n_m}{n_1+N}\Big)^{n_1+K+n_m}}\geq \frac{n_1+K+n_m}{n_1+N}\cdot\frac{(n_m+h)^{n_m}}{\Big(\frac{n_1+N+n_m+h}{n_1+N}\Big)^{n_1+K+n_m+1}},
\end{align*}
\detail{that is,
\begin{align*}
\Big(\frac{n_1+N+n_m+h}{n_1+N+n_m}\Big)^{n_1+K+n_m}\cdot\frac{n_1+N+n_m+h}{n_1+K+n_m}\geq \Big(\frac{n_m+h}{n_m}\Big)^{n_m},
\end{align*}}
which is equivalent to
\begin{align}\label{eq:k-basic_m}
\Big(1+\frac{h}{n_1+N+n_m}\Big)^{n_1+K+n_m}\cdot\frac{n_1+N+n_m+h}{n_1+K+n_m}\geq \Big(1+\frac{h}{n_m}\Big)^{n_m}.
\end{align}
Note that the left hand side is non-increasing in \(K\): differentiating w.r.t.~\(K\) yields
\begin{align*}
&\log\Big(1+\frac{h}{n_1+N+n_m}\Big)\Big(1+\frac{h}{n_1+N+n_m}\Big)^{n_1+K+n_m}\cdot\frac{n_1+N+n_m+h}{n_1+K+n_m}\\
&\quad-\Big(1+\frac{h}{n_1+N+n_m}\Big)^{n_1+K+n_m}\cdot\frac{n_1+N+n_m+h}{(n_1+K+n_m)^2}.
\end{align*}
Since $$\log\Big(1+\frac{h}{n_1+N+n_m}\Big)\leq \frac{h}{n_1+N+n_m}\leq \frac{1}{n_1+N+n_m}\leq \frac{1}{n_1+K+n_m},$$
the derivative is non-positive. Therefore, we only need to show \eqref{eq:k-basic_m} for $K=N$, i.e.,
\begin{align*}
&\Big(1+\frac{h}{n_1+N+n_m}\Big)^{n_1+N+n_m}\cdot\frac{n_1+N+n_m+h}{n_1+N+n_m}\geq \Big(1+\frac{h}{n_m}\Big)^{n_m}
\end{align*}
which holds since $r\mapsto\Big(1+\frac{h}{r}\Big)^r$ is non-decreasing for \(h\ge 0\).
\end{proof}

\begin{remark}
The same calculation as in the proof of Lemma \ref{th:inc-reg-beta-dec} shows that 
for $n_1\ge 0$, \(0< K\le N\)  the function \(\N_{\ge 1}\to \R\)
\begin{align*}
n_m\mapsto \frac{\Gamma(n_1+K+n_m)}{\Gamma(n_1+K)\Gamma(n_m)}\int_{\frac{n_m}{n_1+N}}^\infty\frac{s^{n_m-1}}{(1+s)^{n_1+K+n_m}}ds=I_{\frac{n_1+N}{n_1+N+n_m}}(n_1+K,n_m)
\end{align*}
is strictly increasing. That is, if we require \(0< K\le N\), then the conclusion of Lemma \ref{th:inc-reg-beta-dec} also holds for \(n_1=0\).
\end{remark}

\begin{remark} 
As mentioned in Section \ref{sec:twoplayers}, Lemma \ref{th:inc-reg-beta-dec} is closely related to \cite[Theorem 1]{Hornik24},
which is more special in the sense that \(K=N=0\) and is more general in the sense that it allows for non-integer \(n_m\).

Note also that our proof does not employ deep results about incomplete beta functions.  
\end{remark}

\section{Asymptotics}\label{sec:asymptotics}

We now study the asymptotic behavior of the winning probability \(\pi_1\) with respect to 
\(n_1,\ldots, n_m\). For later use we state two results about limits of gamma random variables.

\begin{lemma}\label{thm:gamma-conv}
Let \((G_n)_{n\in \N}\) be a sequence of random variables, \(G_n\sim \operatorname{Gamma}(n,1)\).
Then 
\[
\lim_{n\to\infty}\frac{G_n}{n}= 1 \quad\text{in probability.}
\]
\end{lemma}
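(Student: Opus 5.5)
The plan is to use Chebyshev's inequality, which only needs the first two moments of the gamma distribution. With the scale parametrization, \(G_n\sim\operatorname{Gamma}(n,1)\) has density \(\frac{x^{n-1}e^{-x}}{\Gamma(n)}\) on \((0,\infty)\). The moments follow directly from the gamma function identity \(\Gamma(n+j)=(n+j-1)\cdots n\,\Gamma(n)\):
\[
\E(G_n)=\frac{\Gamma(n+1)}{\Gamma(n)}=n, \qquad \E(G_n^2)=\frac{\Gamma(n+2)}{\Gamma(n)}=n(n+1).
\]
Hence \(\V(G_n)=n\). It follows that \(\E(G_n/n)=1\) and \(\V(G_n/n)=1/n\).

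Chebyshev's inequality then gives, for every \(\varepsilon>0\),
\[
\P\Big(\Big|\frac{G_n}{n}-1\Big|>\varepsilon\Big)\le \frac{1}{n\varepsilon^2}\longrightarrow 0 \quad (n\to\infty),
\]
which is exactly convergence in probability.

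An alternative argument realizes \(G_n\) as a sum \(E_1+\dots+E_n\) of i.i.d.\ \(\operatorname{Exp}(1)\) variables and applies the weak law of large numbers. That argument only gives equality in distribution with \(G_n\). This is still enough, because the quantity \(\P(|G_n/n-1|>\varepsilon)\) depends only on the law of \(G_n\). The Chebyshev route avoids this detour altogether.

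No step is a genuine obstacle. The only care needed is to use the scale parametrization with scale \(1\), so that the mean is \(n\) and the variance is \(n\).
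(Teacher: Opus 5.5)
Your proof is correct, but it takes a different route from the paper.

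The paper writes \(G_n\stackrel{d}{=}S_n=Y_1+\dots+Y_n\) with i.i.d.\ \(\operatorname{Exp}(1)\) summands. It applies the strong law of large numbers to get \(S_n/n\to 1\) almost surely, and weakens this to convergence in distribution, which is all that transfers to \(G_n/n\) because \(G_n\) and \(S_n\) only share their law. It then upgrades back to convergence in probability, using that the limit is a constant. Your remark that the law-of-large-numbers argument needs only equality in distribution is exactly the subtlety the paper handles in this way.

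Your Chebyshev argument replaces that chain with a two-line moment computation, \(\E(G_n/n)=1\) and \(\V(G_n/n)=1/n\). It has three advantages:
\begin{enumerate}
\item It is more elementary.
\item It gives an explicit rate, \(\P(|G_n/n-1|>\varepsilon)\le 1/(n\varepsilon^2)\).
\item Since \(\Gamma(s+j)/\Gamma(s)\) yields mean and variance \(s\) for every real \(s>0\), it works unchanged for non-integer shapes \(s\to\infty\). This covers \(\operatorname{Gamma}(\alpha n,1)\) as in Proposition \ref{eq:gamma-clt}, and the real-argument extension of \(\pi_1\) that the paper uses later. There the sum-of-exponentials representation is not directly available.
\end{enumerate}

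The paper's route, in turn, uses the same representation as the CLT argument it sketches for Proposition \ref{eq:gamma-clt}.
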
  

\begin{proof} Let \((Y_k)_{k\in \N}\) be a sequence of independent $\operatorname{Exp}(1)$-distributed random variables. Let \(S_n=Y_1+\ldots+Y_n\). 
Then \(G_n\stackrel {d}{=}S_n\).
By the strong law of large numbers we have \(\P(\lim_{n\to \infty } \tfrac{S_n}{n}=1)=1\). From this we get 
\(\lim_{n\to \infty } \tfrac{S_n}{n}=1\) in distribution, and hence \(\lim_{n\to \infty } \tfrac{G_n}{n}=1\) in distribution. Since the limit is a constant, 
convergence in probability follows by \cite[Theorem 25.3]{billingsley1986probability}.
\end{proof}

\begin{proposition}\label{eq:gamma-clt}
Let \(\alpha>0\) and let \((G_n)_{n\in \N_{\ge 1}}\) be a sequence of random variables, where \(G_n\) has a \(\operatorname{Gamma}(\alpha n,1)\) distribution, for every \(n\in\N_{\ge 1}\). Then 
\[
\lim_{n\to \infty}\P\Big(\sqrt{\alpha n}\Big(\frac{G_n}{\alpha n}-1\Big)\le b\Big)=\Phi(b),
\]
where \(\Phi(b)=\int_{-\infty}^b\frac{1}{\sqrt{2\pi}}e^{-\frac{z}{2}}dz\) denotes the standard normal CDF.
\end{proposition}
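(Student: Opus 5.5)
The plan is to reduce the statement to the classical central limit theorem for sums of i.i.d.\ exponential or gamma variables. The only subtlety is that \(\alpha n\) need not be an integer, so \(G_n\) is not literally a sum of \(\alpha n\) independent \(\operatorname{Exp}(1)\) variables. I would handle this with infinite divisibility of the gamma family. Take i.i.d.\ random variables \(Y_1,Y_2,\ldots\) with \(Y_k\sim\operatorname{Gamma}(\alpha,1)\), and set \(S_n:=Y_1+\dotsb+Y_n\). By the convolution property of gamma distributions with common scale, \(S_n\sim\operatorname{Gamma}(\alpha n,1)\), so \(G_n\stackrel{d}{=}S_n\). The probability in the statement depends only on the law of \(G_n\), so it suffices to prove the claim for \(S_n\).

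Next I would compute \(\E(Y_k)=\alpha\) and \(\V(Y_k)=\alpha\); both are finite and the variance is positive. Hence
\[
\sqrt{\alpha n}\Big(\frac{S_n}{\alpha n}-1\Big)=\frac{S_n-n\alpha}{\sqrt{n\alpha}}=\frac{S_n-n\E(Y_1)}{\sqrt{n\V(Y_1)}}.
\]
The Lindeberg--L\'evy central limit theorem (e.g.\ \cite{billingsley1986probability}) gives convergence in distribution of the right-hand side to a standard normal variable. Since \(\Phi\) is continuous, convergence in distribution yields pointwise convergence of the CDFs at every \(b\in\R\), which is exactly the assertion.

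There is no real obstacle here. The one point needing care is the non-integer shape parameter \(\alpha n\), which the \(\operatorname{Gamma}(\alpha,1)\) decomposition resolves. As an alternative check, one could compute the characteristic function directly: the characteristic function of the standardized variable is \(e^{-it\sqrt{\alpha n}}(1-it/\sqrt{\alpha n})^{-\alpha n}\to e^{-t^2/2}\), and L\'evy's continuity theorem then finishes the proof. I would also note in passing that the density in the definition of \(\Phi\) should read \(e^{-z^2/2}\).
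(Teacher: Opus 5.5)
Your proof is correct and follows the route the paper indicates: write \(G_n\) in law as a sum of i.i.d.\ variables and apply the classical central limit theorem. Where the paper says a gamma variable is ``essentially'' a sum of exponentials, you decompose into \(\operatorname{Gamma}(\alpha,1)\) summands, which makes the argument rigorous also when \(\alpha n\) is not an integer. You are also right that the density in \(\Phi\) should read \(e^{-z^2/2}\).
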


\begin{proof}
See \new{\cite[Section 17.2]{JohnKotzBal94}} or use the central limit theorem and the fact that
a gamma random variable essentially is the sum of independent exponential random variables.
\end{proof}

\begin{theorem}[Winning probability for \(n_1\to\infty\)]\label{thm:n1-infty} Let \(\pi_1\) be the function defined in Theorem \ref{thm:rep-pi1}.
Then 
\[
\lim_{n_1\to \infty}\pi_1(n_1,\ldots,n_m)=\prod_{k=2}^m\mathbb{P}(G_k >  n_k)
\]
where is  $G_\ell$ is $\operatorname{Gamma}(n_\ell,1)$-distributed, \(\ell=2,\ldots,m\).
\end{theorem}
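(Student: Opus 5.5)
We will use the gamma representation of Theorem \ref{thm:rep-pi1}\eqref{it:rep-pi1-2}. There, $\pi_1(n_1,\ldots,n_m)$ is the probability of the event that $\frac{G_k}{n_k}>\frac{G_1}{n_1}$ for $k=2,\ldots,m$. The random variables $G_2,\ldots,G_m$ do not depend on $n_1$; only $G_1\sim\operatorname{Gamma}(n_1,1)$ does. By Lemma \ref{thm:gamma-conv}, $G_1/n_1\to 1$ in probability as $n_1\to\infty$. We can realize all $G_1=G_1^{(n_1)}$ on one probability space, independent of $(G_2,\ldots,G_m)$, for instance as partial sums of i.i.d.\ exponentials. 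Then $Y_{n_1}:=G_1^{(n_1)}/n_1\to 1$ almost surely by the strong law, although convergence in probability is also enough.

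Condition on $Y_{n_1}$, as in \eqref{eq:Pdisint}:
\[
\pi_1=\E\Big[F\big(Y_{n_1}\big)\Big],\qquad F(x):=\prod_{k=2}^m\P\Big(\frac{G_k}{n_k}>x\Big).
\]
The function $F$ is bounded by $1$ and continuous on $(0,\infty)$, because each $G_k$ has a continuous distribution. Since $Y_{n_1}\to 1$ in probability, the continuous mapping theorem gives $F(Y_{n_1})\to F(1)$ in probability. Bounded convergence then yields $\pi_1\to F(1)=\prod_{k=2}^m\P(G_k>n_k)$, which is the claim.

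The only point that needs care is that $F$ is continuous at $1$, and this holds because the gamma distributions are atomless. To avoid coupling arguments, we can instead give a direct $\varepsilon$--$\delta$ estimate. Split $\E[F(Y_{n_1})]$ according to whether $|Y_{n_1}-1|\le\delta$. On that event, $|F(Y_{n_1})-F(1)|$ is small by continuity. The complementary event contributes at most $\P(|Y_{n_1}-1|>\delta)$, which tends to $0$ by Lemma \ref{thm:gamma-conv}. We expect no real obstacle here.
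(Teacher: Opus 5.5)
Your proof is correct and is essentially the paper's argument. Both start from the gamma representation in Theorem \ref{thm:rep-pi1}\eqref{it:rep-pi1-2}, use $G_1/n_1\to 1$ in probability from Lemma \ref{thm:gamma-conv}, and rely on the gamma laws being atomless. The only difference is how you pass to the limit: the paper applies Slutsky's theorem to the vector $\big(\frac{G_k}{n_k}-\frac{G_1}{n_1}\big)_{k=2}^m$, uses that the open orthant is a continuity set of the continuous limiting law, and then factorizes by independence, whereas you condition on $G_1$ as in \eqref{eq:Pdisint} and apply bounded convergence to the function $F$, which is continuous at $1$. Your route gives the product form directly and avoids the Slutsky and continuity-set step.
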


\begin{proof} 
We use the representation of \(\pi_1\) from Theorem \ref{thm:rep-pi1}\eqref{it:rep-pi1-2},
\[\pi_1(n_1,\ldots,n_m)=\mathbb{P}\left(\frac{G_2}{n_2}>\frac{G_1}{n_1},\dotsc,\frac{G_m}{n_m}>\frac{G_1}{n_1}\right)\]
where $(G_1,\dotsc,G_m)$ is a vector of independent random variables $G_\ell$, which are $\operatorname{Gamma}(n_\ell,1)$-distributed.
By Lemma \ref{thm:gamma-conv}
\begin{equation}\label{eq:pre-slutsky}
\lim_{n_1\to\infty}\frac{G_1}{n_1}\to 1 \quad\text{in probability,} 
\end{equation}
so invoking Slutsky's Theorem gives that 
\[
\left(\frac{G_2}{n_2}-\frac{G_1}{n_1},\dotsc,\frac{G_m}{n_m}-\frac{G_1}{n_1}\right)\stackrel{d}{\to}\left(\frac{G_2}{n_2}-1,\dotsc,\frac{G_m}{n_m}-1\right) .
\]  
Since the tuple $\left(\frac{G_2}{n_2}-1,\dotsc,\frac{G_m}{n_m}-1\right)$ has a continuous (multivariate) distribution,
\begin{align*}
&\lim_{n_1\to\infty}\mathbb{P}\left(\frac{G_2}{n_2}>\frac{G_1}{n_1},\dotsc,\frac{G_m}{n_m}>\frac{G_1}{n_1}\right)
=\mathbb{P}\left(\frac{G_2}{n_2}>1,\dotsc,\frac{G_m}{n_m}>1\right)
=\prod_{k=2}^m\mathbb{P}(G_k >  n_k),
\end{align*}
where we used the independence of \(G_2,\ldots,G_m\) in the last step. 
\end{proof}

\begin{remark}
For the proof of Theorem \ref{thm:n1-infty} one could also start with the representation of \(\pi_1\) from Theorem \ref{thm:rep-pi1}(i),
\[\pi_1(n_1,\ldots,n_m)=\mathbb{P}\left(X_2>\frac{n_2}{n_1},\dotsc,X_m>\frac{n_m}{n_1}\right)\]
where $(X_2,\dotsc,X_m)\sim \operatorname{ID}(n_2,\dotsc,n_m;n_1)$ and use 
\cite[Section 5]{Tiao_Cuttman1965}.  There the authors show via series expansions that 
\begin{align*}
(n_1X_2,\dotsc,n_1X_m)\stackrel{d}{\to}(G_2,\dotsc,G_m)\;(n_1\to \infty),
\end{align*}
where $(G_2,\dotsc,G_m)$ is a vector of independent random variables $G_\ell$, which are $\operatorname{Gamma}(n_\ell,1)$-distributed. 
\end{remark}

The special case $m=2$ of Theorem \ref{thm:n1-infty}, interpreted as property of the negative binomial distribution has been discovered in \cite{Pes61}.\smallskip

Using Theorem \ref{thm:n1-infty} and  Lemma \ref {thm:conjugate-poisson}, we get that the limiting probability can also be expressed as
$$\lim_{n_1\to \infty}\pi_1(n_1,\ldots,n_m)=\prod_{k=2}^m\mathbb{P}(G_k >  n_k)=\prod_{k=2}^m\mathbb{P}(P_k<n_k),$$
where $(P_2,\dotsc,P_m)$ is a vector of independent random variables with $P_i\sim\operatorname{Pois}(n_i)$.
Recall that a \(\operatorname{Pois}(n)\) random variable has the same distribution as the sum of \(n\) independent \(\operatorname{Pois}(1)\) random variables.
Thus, by the central limit theorem, for a sequence \(P_n\) of random variables with \(P_n\sim \operatorname{Pois}(n)\) we have
for all \(x\in \R\)
\[
\lim_{n\to \infty}\mathbb{P}\Big(\frac{P_n-n}{\sqrt{n}}<x\Big)=\int_{-\infty}^x \frac{1}{\sqrt{2\pi}}e^{-\frac{x^2}{2}}dx,
\]
and therefore \(
\lim_{n\to \infty}\mathbb{P}(P_n<n)=\lim_{n\to \infty}\mathbb{P}(\frac{P_n-n}{\sqrt{n}}<0)=\frac{1}{2}.
\) 
Hence we get the surprising fact 
\begin{equation}\label{eq:gstoert}
\lim_{n_m\to \infty}\ldots\lim_{n_1\to \infty}\pi_1(n_1,\ldots,n_m)=\frac{1}{2^{m-1}}.
\end{equation}

The next theorem shows that the picture looks different if we let one of the other $n_k$, say $n_m$, tend to \(\infty\) first.

\begin{theorem}[Winning probability for \(n_m\to\infty\)]\label{thm:nm-infty} With \(X_2,\ldots,X_m\) and \(\pi_1\) as in Theorem \ref{thm:rep-pi1}(i),  
\begin{align*}
&\lim_{n_m\to \infty}\pi_1(n_1,n_2,\ldots,n_m)\\
&=\int_{\frac{n_2}{n_1}}^\infty\cdots\int_{\frac{n_{m-1}}{n_1}}^\infty\int_{0}^{n_1(1+\sum_{k=2}^{m-1}s_k)}f_\Gamma(s_m)ds_{m}\ \varphi_{<m}(s_2,\ldots,s_{m-1})ds_{m-1}\dotsc ds_2,
\end{align*}
where 
\(\varphi_{<m}\) is the marginal density of \(X_2,\ldots,X_{m-1}\) and \(f_\Gamma\) is a \(\operatorname{Gamma}\Big(\sum_{k=1}^{m-1}n_k,1\Big)\) density.
\end{theorem}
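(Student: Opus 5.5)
We work with the representation of Theorem \ref{thm:rep-pi1}\eqref{it:rep-pi1-1} and split off the last coordinate using the gamma representation of Lemma \ref{lem:gammarep}. Write $X_k=G_k/G_1$ with independent $G_k\sim\operatorname{Gamma}(n_k,1)$. Condition on $(X_2,\ldots,X_{m-1})$. By Remark \ref{rem:idir}\eqref{it:idir-marg} this vector has density $\varphi_{<m}$, and it does not depend on $n_m$. Then
\[
\pi_1=\int_{\frac{n_2}{n_1}}^\infty\cdots\int_{\frac{n_{m-1}}{n_1}}^\infty \P\Big(X_m>\tfrac{n_m}{n_1}\,\Big|\,X_2=s_2,\ldots,X_{m-1}=s_{m-1}\Big)\varphi_{<m}(s_2,\ldots,s_{m-1})\,ds_{m-1}\cdots ds_2 .
\]

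To compute the conditional law, divide the joint inverted Dirichlet density of Definition \ref{def:idir} by the marginal $\varphi_{<m}$. With $c:=1+\sum_{k=2}^{m-1}s_k$ and $M:=\sum_{k=1}^{m-1}n_k$, this gives the density
\[
s_m\mapsto \frac{1}{\Beta(M,n_m)}\,\frac{s_m^{n_m-1}c^{M}}{(c+s_m)^{M+n_m}} .
\]
Hence $X_m/c$ is conditionally $\operatorname{ID}(n_m;M)$, i.e.\ distributed like $G_m/H$ with $H\sim\operatorname{Gamma}(M,1)$ independent of $G_m$. So the conditional probability equals
\[
\P\Big(\frac{G_m}{n_m}>\frac{c\,H}{n_1}\Big),
\]
and it is a function of $c$ alone. (Alternatively, one can read this off directly: conditionally on the ratios, $G_1$ has a gamma law.)

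Next we let $n_m\to\infty$ for fixed $c$ and $H$. By Lemma \ref{thm:gamma-conv}, $G_m/n_m\to1$ in probability, so Slutsky's theorem gives $G_m/n_m-cH/n_1\to 1-cH/n_1$ in distribution. Since $H$ has a continuous law, the conditional probability converges to
\[
\P(H<n_1c)=\int_0^{n_1 c} f_\Gamma(s_m)\,ds_m ,
\]
where $f_\Gamma$ is the $\operatorname{Gamma}(M,1)$ density. This is exactly the inner integral in the statement.

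Finally, the conditional probabilities are bounded by $1$ and $\varphi_{<m}$ is a fixed integrable density, so dominated convergence lets us pass the limit inside the outer integral. This yields the claimed formula.

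The main obstacle is the conditional-law computation. We must make sure that the dependence on $n_m$ sits only in the conditional factor, and that the scaling by $c$ comes out correctly, giving $n_1(1+\sum s_k)$ in the upper limit. Everything after that is a routine application of Slutsky's theorem and dominated convergence.
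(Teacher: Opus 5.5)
Your argument is correct and is essentially the paper's proof. The paper makes the substitution $t=s_m/(1+\sum_{k=2}^{m-1}s_k)$ to split the joint density into an $\operatorname{ID}(n_m;\sum_{k=1}^{m-1}n_k)$ factor times $\varphi_{<m}$, which is exactly your conditional-density computation; it then reads the inner integral as a gamma-quotient probability and uses $G_m/n_m\to 1$ in probability, while your explicit dominated-convergence step is a small rigor gain the paper leaves implicit.

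There is one slip. Since $X_m/c\stackrel{d}{=}G_m/H$, the conditional probability is $\P\big(\frac{G_m}{n_m}>\frac{H}{n_1c}\big)$, not $\P\big(\frac{G_m}{n_m}>\frac{cH}{n_1}\big)$ as you displayed. Taken literally, your display gives the limit $\P(H<n_1/c)$, whereas the corrected expression yields the $\P(H<n_1c)$ that you (correctly) conclude.
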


\begin{proof}
We omit the parameters of \(\pi_1\) for brevity.
Recall that by Theorem \ref{thm:rep-pi1}\eqref{it:rep-pi1-1} 
the winning probability of the first player is
\[
\pi_1=\frac{\Gamma\Big(\sum_{k=1}^m n_k\Big)}{\prod_{k=1}^m\Gamma(n_k)}\int_{n_2/n_1}^\infty\cdots\int_{n_m/n_1}^\infty \frac{\prod_{k=2}^m s_k^{n_k-1}}{\Big(1+\sum_{k=2}^ms_k\Big)^{\sum_{k=1}^mn_k}}ds_m\dotsc ds_2\,.
\]
\detail{Abbreviating \(A:=1+\sum_{k=2}^{m-1}s_k\), the integrand can be rewritten
\begin{align*}
\frac{\prod_{k=2}^m s_k^{n_k-1}}{\Big(1+\sum_{k=2}^ms_k\Big)^{\sum_{k=1}^mn_k}}
&=\frac{s_m^{n_m-1}\prod_{k=2}^{m-1} s_k^{n_k-1}}{\Big(1+s_m+\sum_{k=2}^{m-1}s_k\Big)^{\sum_{k=1}^mn_k}}
=\frac{\frac{1}{A}\big(\frac{s_m}{A}\big)^{n_m-1}\prod_{k=2}^{m-1} s_k^{n_k-1}}{\big(1+\frac{s_m}{A}\big)^{\sum_{k=1}^mn_k}\Big(1+\sum_{k=2}^{m-1}s_k\Big)^{\sum_{k=1}^{m-1}n_k}}.
\end{align*}
}
It can be rewritten via substitution  as
\[\scriptstyle
\pi_1=\frac{\Gamma\Big(\sum_{k=1}^m n_k\Big)}{\prod_{k=1}^m\Gamma(n_k)}\int_{\frac{n_2}{n_1}}^\infty\cdots\int_{\frac{n_{m-1}}{n_1}}^\infty\int_{\frac{n_m}{n_1(1+s_2+\dotsb+s_{m-1})}}^\infty \frac{s_m^{n_m-1}}{(1+s_m)^{\sum_{k=1}^m n_k}} ds_m \frac{\prod_{k=2}^{m-1} s_k^{n_k-1}}{\Big(1+\sum_{k=2}^{m-1}s_k\Big)^{\sum_{k=1}^{m-1}n_k}}ds_{m-1}\dotsc ds_2,
\]
which gives, distributing the gamma functions differently,
\begin{align*}
\pi_1=\int_{\frac{n_2}{n_1}}^\infty\cdots\int_{\frac{n_{m-1}}{n_1}}^\infty&\int_{\frac{n_{m}}{n_1(1+s_2+\dotsb+s_{m-1})}}^\infty \frac{\Gamma\Big(\sum_{k=1}^m n_k\Big)}{\Gamma(n_m)\Gamma\Big(\sum_{k= 1}^{m-1}n_k\Big)}\frac{s_m^{n_m-1}}{(1+s_m)^{\sum_{k=1}^m n_k}} ds_m\\
& \cdot\frac{\Gamma\Big(\sum_{k=1}^{m-1} n_k\Big)}{\prod_{k=1}^{m-1}\Gamma(n_k)}\frac{\prod_{k=2}^{m-1} s_k^{n_k-1}}{\Big(1+\sum_{k=2}^{m-1}s_k\Big)^{\sum_{k=1}^{m-1}n_k}}ds_{m-1}\dotsc ds_2.
\end{align*}

Note, that the integrand's second factor,
$$\frac{\Gamma\Big(\sum_{k=1}^{m-1} n_k\Big)}{\prod_{k=1}^{m-1}\Gamma(n_k)}\frac{\prod_{k=2}^{m-1} s_k^{n_k-1}}{\Big(1+\sum_{k=2}^{m-1}s_k\Big)^{\sum_{k=1}^{m-1}n_k}},$$
is the density of an $\operatorname{ID}(n_2,\dotsc,n_{m-1};n_1)$ distribution, coinciding with the marginal density of $(X_2,\dotsc,X_{m-1})$ cf.~Remark \ref{rem:idir}\eqref{it:idir-marg}. The first factor, 
$$\frac{\Gamma\Big(\sum_{k=1}^m n_k\Big)}{\Gamma(n_m)\Gamma\Big(\sum_{k=1}^{m-1}n_k\Big)} \frac{s_m^{n_m-1}}{(1+s_m)^{\sum_{k=1}^m n_k}} $$
is the density of a univariate inverted-Dirichlet distribution with parameter set $\Big(n_m;\sum_{k=1}^{m-1} n_k,1\Big)$ (or `inverted beta' or  `beta-prime' distribution). Since, by Lemma \ref{lem:gammarep}, such a distribution emerges from a quotient $\frac{G_m}{G_{\Sigma}}$ of independent random variables, where $G_m\sim\operatorname{Gamma}(n_m,1)$ and $G_{\Sigma}\sim\operatorname{Gamma}\Big(\sum_{k=1}^{m-1}n_k,1\Big)$, the integral
\begin{align*}
\int_{\frac{n_m}{n_1(1+s_2+\dotsb+s_{m-1})}}^\infty \frac{\Gamma\Big(\sum_{k=1}^m n_k\Big)}{\Gamma(n_m)\Gamma\Big(\sum_{k=1}^{m-1}n_k\Big)}\frac{s_m^{n_m-1}}{(1+s_m)^{\sum_{k=1}^m n_k}} ds_m
\end{align*}
is actually the probability
$$\mathbb{P}\Big(\frac{G_m}{G_{\Sigma}} >  \frac{n_m}{n_1(1+s_2+\dotsb+s_{m-1})}\Big)=\mathbb{P}\Big(\frac{G_m}{n_m} >  \frac{G_{\Sigma}}{n_1(1+s_2+\dotsb+s_{m-1})}\Big).$$
Since by Lemma \eqref{thm:gamma-conv}, 
\(
\lim_{n_m\to\infty}\frac{G_m}{n_m}\to 1 \;\text{in probability}, 
\)
we get that
\begin{align*}
&\lim_{n_m\to\infty}\mathbb{P}\Big(\frac{G_m}{n_m} >  \frac{G_{\Sigma}}{n_1(1+s_2+\dotsb+s_{m-1})}\Big)\\
&=\mathbb{P}\Big(1 >  \frac{G_{\Sigma}}{n_1(1+s_2+\dotsb+s_{m-1})}\Big)
=\mathbb{P}\big(G_{\Sigma}< n_1(1+s_2+\dotsb+s_{m-1})\big)
\end{align*}
with the same arguments as below \eqref{eq:pre-slutsky}.
Thus, our limiting probability of player 1 winning as $n_m\to\infty$  is given by
\begin{align*}
\int_{\frac{n_2}{n_1}}^\infty\cdots\int_{\frac{n_{m-1}}{n_1}}^\infty\mathbb{P}\Big(G_{\Sigma}< n_1\Big(1+\sum_{k=2}^{m-1}s_k\Big)\Big)\frac{\Gamma\Big(\sum_{k=1}^{m-1} n_k\Big)}{\prod_{k=1}^{m-1}\Gamma(n_k)}\frac{\prod_{k=2}^{m-1} s_k^{n_k-1}}{\Big(1+\sum_{k=2}^{m-1}s_k\Big)^{\sum_{k=1}^{m-1}n_i}}ds_{m-1}\dotsc ds_2,
\end{align*}
or, written out,
\begin{align*}\scriptstyle
\int_{\frac{n_2}{n_1}}^\infty\cdots\int_{\frac{n_{m-1}}{n_1}}^\infty\int_{0}^{n_1(1+\sum_{k=2}^{m-1}s_k)}\frac{s_m^{\sum_{k=1}^{m-1}n_k-1}e^{-s_m}}{\Gamma\Big(\sum_{k= 1}^{m-1}n_k\Big)}ds_m\frac{\Gamma\Big(\sum_{k=1}^{m-1} n_k\Big)}{\prod_{k=1}^{m-1}\Gamma(n_k)}\frac{\prod_{k=2}^{m-1} s_k^{n_k-1}}{\Big(1+\sum_{k=2}^{m-1}s_k\Big)^{\sum_{k=1}^{m-1}n_i}}ds_{m-1}\dotsc ds_2.
\end{align*}
\end{proof}

For several selected parameters (e.g.,~the last ones) $n_{\ell},\dotsc,n_m$ sent to infinity this works similarly: substituting $t_k=\frac{s_k}{1+s_{2}+\dotsb+s_{\ell-1}}$, \(k=\ell,\ldots,m\), the winning probability of the first player is
\begin{align*}
\int_{\frac{n_{2}}{n_1}}^\infty\dotsc\int_{\frac{n_{\ell-1}}{n_1}}^\infty &\int_{\frac{n_{\ell}}{n_1(1+s_{2}+\dotsb+s_{\ell-1})}}^\infty\cdots\int_{\frac{n_m}{n_1(1+s_{2}+\dotsb+s_{\ell-1})}}^\infty\varphi_{\ge\ell}(t_{\ell},\dotsc,t_m)dt_\ell\cdots dt_m\\
&\cdot \varphi_{<\ell}(s_{2},\dotsc,s_{\ell-1})ds_{2}\cdots ds_{\ell-1},
\end{align*}
where $\varphi_{\ge\ell}$ is the density of an inverted-Dirichlet distribution with parameter set $\Big(n_{\ell},\dotsc,n_m;\sum_{k=1}^{\ell-1} n_k\Big)$ and $\varphi_{<\ell}$ is the one with parameter set $(n_2,\dotsc,n_{\ell-1};n_1)$. Again, the integrals over $\varphi_{\ge \ell}$ can be represented as the probability
\begin{align*}
\mathbb{P}\left(\frac{G_\ell}{n_\ell} >  \frac{G_{\Sigma}}{n_1(1+s_{2}+\dotsb+s_{\ell-1})},\dotsc,\frac{G_m}{n_m} >  \frac{G_{\Sigma}}{n_1(1+s_{2}+\dotsb+s_{\ell-1})}\right),
\end{align*}
where $G_{k}\sim\operatorname{Gamma}(n_k,1), \ell\leq k\leq m$ and $G_\Sigma\sim \operatorname{Gamma}\Big(\sum_{k=1}^{\ell-1} n_k,1\Big)$ are independent.
Since, by Lemma \ref{thm:gamma-conv}, $\lim_{n_k\to\infty}\frac{G_k}{n_k}=1$ in probability for $\ell \leq k\leq m$, the limiting probability is
(following again the arguments below \eqref{eq:pre-slutsky})
\begin{align*}
&\mathbb{P}\big(G_\Sigma< n_1(1+s_{2}+\dotsb+s_{\ell-1}),\dotsc,G_\Sigma< n_1(1+s_{2}+\dotsb+s_{\ell-1})\big)\\
&=\mathbb{P}\big(G_\Sigma < n_1(1+s_{2}+\dotsb+s_{\ell-1})\big),
\end{align*}
so the winning probability of the first player, as $n_\ell,\dotsc,n_m$ tend to infinity, is
\begin{align*}
\int_{\frac{n_{2}}{n_1}}^\infty\dotsc\int_{\frac{n_{\ell-1}}{n_1}}^\infty \int_0^{n_1(1+s_{2}+\dotsb+s_{\ell-1})} \frac{x^{\sum_{k=1}^{\ell-1} n_k-1}e^{-x}}{\Gamma\Big(\sum_{k=1}^{\ell-1} n_k\Big)}dx\, \varphi_{<\ell}(s_2,\dotsc,s_{\ell-1})ds_{\ell-1}\cdots ds_{2},
\end{align*}
which is an integral over a multivariate Liouville distribution density (cf.~\cite[Section 6]{Fang_et_al_1990}).

\begin{newenv}
Rewriting as probability, we get that 
\begin{align*}
\hat{\pi}_1(n_1,\ldots,n_{\ell-1})
&:=\lim_{n_\ell\to \infty}\cdots \lim_{n_m\to \infty}\pi_1(n_1,\ldots,n_{m})\\
&=\P\Big(G_\Sigma<n_1\Big(1+\frac{G_2}{G_1}+\ldots+\frac{G_{\ell-1}}{G_1}\Big), \frac{G_{2}}{n_{2}}>\frac{G_{1}}{n_{1}},\ldots, \frac{G_{\ell-1}}{n_{\ell-1}}>\frac{G_{1}}{n_{1}}\Big).
\end{align*}
Next, we want to let \(n_1\to \infty\). We write \(G_\Sigma=G'_\Sigma+G'_1\), where \(G'_\Sigma\sim \operatorname{Gamma}(\sum_{k=2}^{\ell-1} n_k,1)\) and
 \(G'_1\sim \operatorname{Gamma}(n_1,1)\), and \(G_1,\ldots, G_{\ell-1},G'_1,G'_\Sigma\) are independent.
With this
\begin{align*}
&\Big\{G_\Sigma<n_1\Big(1+\frac{G_2}{G_1}+\ldots+\frac{G_{\ell-1}}{G_1}\Big)\Big\}\\
&=\Big\{G'_\Sigma+G'_1-n_1<\frac{n_1}{G_1}G_2+\ldots+\frac{n_1}{G_1}G_{\ell-1}\Big\}\\
&=\Big\{\frac{G'_\Sigma}{\sqrt n_1}+\sqrt n_1\Big(\frac{G'_1}{n_1}-1\Big)<\frac{\sqrt n_1}{G_1}G_2+\ldots+\frac{\sqrt n_1}{G_1}G_{\ell-1}\Big\}\,.
\end{align*}
Now we have, by Lemma \ref{thm:gamma-conv} and Proposition \ref{eq:gamma-clt},  
\[
\frac{G'_\Sigma}{\sqrt n_1}\to 0 \text{ and }\frac{\sqrt n_1}{G_1}G_k\to 0 \;  \text{in probability, and} \;
\sqrt n_1\Big(\frac{G'_1}{n_1}-1\Big)\stackrel{d}\to Z'
\]
for \(n_1\to \infty\) (where \(Z'\) is a standard normal random variable,
independent of \(G_2,\ldots,G_m\)). Using Slutsky's theorem, the tuple 
$$\left(\frac{G'_\Sigma}{\sqrt n_1}+\sqrt n_1\Big(\frac{G'_1}{n_1}-1\Big)-\Big(\frac{\sqrt n_1}{G_1}G_2+\ldots+\frac{\sqrt n_1}{G_1}G_{\ell-1}\Big),\frac{G_2}{n_2}-\frac{G_1}{n_1},\dotsc,\frac{G_{\ell-1}}{n_{\ell-1}}-\frac{G_1}{n_1}\right)$$
converges in distribution to $\Big(Z',\frac{G_2}{n_2}-1,\dotsc,\frac{G_{\ell-1}}{n_{\ell-1}}-1\Big)$, which is a continuous $\ell$-variate distribution. Hence,
\begin{align*}
\lim_{n_1\to\infty}\hat{\pi}_1(n_1,\ldots,n_{\ell-1})
&=\P(Z<0,G_2>n_2,\ldots, G_{\ell-1}>n_{\ell-1})\\
&=\P(Z<0)\prod_{k=2}^{\ell-1}\P(G_{k}>n_{k})=\frac 1 2\prod_{k=2}^{\ell-1}\P(G_{k}>n_{k}).
\end{align*}
\end{newenv}
Now, if we let the remaining arguments tend to \(\infty\) we get 
\[
\lim_{n_2\to\infty}\cdots\lim_{n_{\ell-1}\to\infty}\lim_{n_{1}\to\infty}\lim_{n_{\ell}\to\infty}\cdots\lim_{n_{m}\to\infty}\pi_1(n_1,\ldots,n_{m})
=\frac{1}{2^{\ell-1}}
\]
which coincides with the limit in  
\eqref{eq:gstoert} for \(\ell=m\). Note however, that in  
\eqref{eq:gstoert} we take the limit \(n_1\to\infty\) before all others, so this is a different limit!
\medskip

Finally, we consider the case where all \(n_1,\ldots,n_m\) tend to \(\infty\) simultaneously, while keeping their proportions constant.
That is, we study the winning probability for player 1 for  \((n_1,\ldots,n_m)=n(\alpha_1,\ldots,\alpha_m)\) when \(n\) tends to infinity.

\begin{proposition}\label{thm:gama-normal-win} Let \(\alpha_1,\ldots,\alpha_m>0\).
For every \(n\in\N_{\ge 1}\), let \(G_{1,n},\ldots,G_{m,n}\) be independent and let \(G_{k,n}\) have a \(\operatorname{Gamma}(\alpha_k n,1)\) distribution, \(k=1,\ldots,m\). Then 
\begin{align*}
\lim_{n\to \infty}\P\Big(\frac{G_{2,n}}{\alpha_2 n}>\frac{G_{1,n}}{\alpha_1 n}, \ldots , \frac{G_{m,n}}{\alpha_m n}>\frac{G_{1,n}}{\alpha_1 n}\Big)
&=\P\Big(\frac{Z_2}{\sqrt{\alpha_2 }}>\frac{Z_1}{\sqrt{\alpha_1 }}, \ldots , \frac{Z_m}{\sqrt{\alpha_m }}>\frac{Z_1}{\sqrt{\alpha_1 }}\Big),
\end{align*}
where \(Z_1,\ldots,Z_n\) are independent standard normal variables.
\end{proposition}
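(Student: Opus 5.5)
The plan is to standardize each gamma variable and then pass to the limit with the central limit theorem and Slutsky's theorem, in the same way as in the proof of Theorem \ref{thm:n1-infty}. For $k=1,\ldots,m$ put
\[
Z_{k,n}:=\sqrt{\alpha_k n}\Big(\frac{G_{k,n}}{\alpha_k n}-1\Big).
\]
Proposition \ref{eq:gamma-clt} gives $Z_{k,n}\stackrel{d}{\to} Z_k$, where $Z_k$ is standard normal. The $G_{k,n}$ are independent, so the characteristic function of the vector $(Z_{1,n},\ldots,Z_{m,n})$ is the product of the marginal ones. Hence the vector converges jointly in distribution to $(Z_1,\ldots,Z_m)$ with independent standard normal components.

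Next we rewrite the event. For each $k=2,\ldots,m$ we have
\[
\frac{G_{k,n}}{\alpha_k n}-\frac{G_{1,n}}{\alpha_1 n}=\frac{1}{\sqrt n}\Big(\frac{Z_{k,n}}{\sqrt{\alpha_k}}-\frac{Z_{1,n}}{\sqrt{\alpha_1}}\Big),
\]
because the constants $1$ cancel. The factor $\frac{1}{\sqrt n}$ is positive, so the event in the statement is exactly
\[
\Big\{W_n\in (0,\infty)^{m-1}\Big\},\qquad W_n:=\Big(\frac{Z_{k,n}}{\sqrt{\alpha_k}}-\frac{Z_{1,n}}{\sqrt{\alpha_1}}\Big)_{k=2,\ldots,m}.
\]
The map $(z_1,\ldots,z_m)\mapsto \big(z_k/\sqrt{\alpha_k}-z_1/\sqrt{\alpha_1}\big)_{k=2}^m$ is linear, hence continuous. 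The continuous mapping theorem therefore gives $W_n\stackrel{d}{\to}W$, where $W=\big(Z_k/\sqrt{\alpha_k}-Z_1/\sqrt{\alpha_1}\big)_{k=2}^m$.

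The only point needing care is the boundary of the orthant, and it is a mild one. By the portmanteau theorem, $\P(W_n\in(0,\infty)^{m-1})\to\P(W\in(0,\infty)^{m-1})$ as soon as $\P\big(W\in\partial(0,\infty)^{m-1}\big)=0$. The boundary is contained in the union of the hyperplanes $\{w_k=0\}$. Each $W_k$ is a nondegenerate normal variable with variance $1/\alpha_k+1/\alpha_1>0$, so $\P(W_k=0)=0$ for every $k$, and the boundary has probability zero.

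Combining these steps, the limit of the probability in the statement is $\P\big(Z_k/\sqrt{\alpha_k}>Z_1/\sqrt{\alpha_1},\ k=2,\ldots,m\big)$, which is the claim. Here the $Z_1,\ldots,Z_m$ in the statement are read as $m$ independent standard normals. Non-integer shapes $\alpha_k n$ cause no problem, because Proposition \ref{eq:gamma-clt} already covers real multiples $\alpha n$.
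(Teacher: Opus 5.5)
Your proof is correct and follows essentially the same route as the paper: standardize each $G_{k,n}$, use independence to get joint convergence of the standardized vector to $(Z_1,\ldots,Z_m)$, rewrite the event as $Z_{k,n}/\sqrt{\alpha_k}>Z_{1,n}/\sqrt{\alpha_1}$, and pass to the limit with the portmanteau theorem. Your check that the boundary of the limiting set lies in finitely many hyperplanes, each of probability zero, is actually a cleaner justification than the paper's. The paper instead appeals to the claim that every Borel set is a continuity set of the normal distribution, which is too strong as stated.
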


\begin{proof} 
Note that \(\frac{G_{k,n}}{\alpha_k n}>\frac{G_{1,n}}{\alpha_1 n}\Longleftrightarrow \sqrt{\alpha_k n}\big(\frac{G_{k,n}}{\alpha_k n}-1\big)>\sqrt{\frac{\alpha_k}{\alpha_1}}\sqrt{\alpha_1 n}\big(\frac{G_{1,n}}{\alpha_1 n}-1\big)\) so that with  
\(H_{k,n}:=\sqrt{\alpha_k n}\big(\frac{G_{k,n}}{\alpha_k n}-1\big)\),  \(k=1,\ldots,m\), 
\begin{align*}
&\P\Big(\frac{G_{2,n}}{\alpha_2 n}>\frac{G_{1,n}}{\alpha_1 n}, \ldots , \frac{G_{m,n}}{\alpha_m n}>\frac{G_{1,n}}{\alpha_1 n}\Big)
=\P\Big(H_{2,n}>\sqrt{\frac{\alpha_2}{\alpha_1}}H_{1,n}, \ldots , H_{m,n}>\sqrt{\frac{\alpha_m}{\alpha_1}}H_{1,n}\Big).
\end{align*}
By independence, the joint CDF of \(H_{1,n},\ldots,H_{m,n}\) is the product of the marginal CDFs, and 
each \(H_{k,n}\) converges to a standard normal variable in distribution, for \(n\to \infty\), in other words 
\begin{align*}
(H_{1,n},\dotsc,H_{m,n})\stackrel{d}{\to}(Z_1,\dotsc,Z_m)\quad\text{ as }n\to\infty.
\end{align*} Using the vector's distribution $P_{(H_{1,n},\dotsc,H_{m,n})}$, we can write
\begin{align*}
&\P\Big(H_{2,n}>\sqrt{\frac{\alpha_2}{\alpha_1}}H_{1,n}, \ldots , H_{m,n}>\sqrt{\frac{\alpha_m}{\alpha_1}}H_{1,n}\Big)
=\\
&\int_{\R^m} 1_{\big\{x_2>\sqrt{\frac{\alpha_k}{\alpha_1}} x_1,\dotsc,x_m>\sqrt{\frac{\alpha_m}{\alpha_1}} x_1\big\}} d P_{(H_{1,n},\dotsc,H_{m,n})}(x_1,\dotsc,x_m).
\end{align*}
Knowing that for the $m$-variate standard normal distribution $P_{(Z_1,\dotsc,Z_m)}$ all Borel sets are continuity sets \cite[Theorem 29.2]{billingsley1986probability}, we get the convergence
\begin{align*}
&\lim_{n\to\infty}\int_{\R^m} 1_{\big\{x_2>\sqrt{\frac{\alpha_2}{\alpha_1}} x_1,\dotsc,x_m>\sqrt{\frac{\alpha_m}{\alpha_1}} x_1\big\}} d P_{(H_{1,n},\dotsc,H_{m,n})}(x_1,\dotsc,x_m)\\
&=\int_{\R^m} 1_{\big\{x_2>\sqrt{\frac{\alpha_2}{\alpha_1}} x_1,\dotsc,x_m>\sqrt{\frac{\alpha_m}{\alpha_1}} x_1\big\}} d P_{(Z_1,\dotsc,Z_m)}(x_1,\dotsc,x_m),
\end{align*}
which can also be written as
\begin{align*}
\int_{\R^m} \prod_{k=2}^m 1_{\big\{x_k>\sqrt{\frac{\alpha_k}{\alpha_1}} x_1\big\}} d P_{Z_2}(x_2)\cdots d P_{Z_m}(x_m),
\end{align*}
or as $\P\Big(\frac{Z_2}{\sqrt{\alpha_2 }}>\frac{Z_1}{\sqrt{\alpha_1 }}, \ldots , \frac{Z_m}{\sqrt{\alpha_m }}>\frac{Z_1}{\sqrt{\alpha_1 }}\Big),$ proving the assertion.
\end{proof}

Now we use Proposition \ref{thm:gama-normal-win}, disintegration over \(Z_1\), and independence 
to compute the limiting winning probability for player 1: 
\begin{align*}
&\lim_{n\to \infty}\P\Big(\frac{G_2}{\alpha_k n}>\frac{G_1}{\alpha_1 n}, \ldots , \frac{G_m}{\alpha_k n}>\frac{G_1}{\alpha_1 n}\Big)\\
&=\P\Big(\frac{Z_2}{\sqrt{\alpha_2 }}>\frac{Z_1}{\sqrt{\alpha_1 }}, \ldots , \frac{Z_m}{\sqrt{\alpha_m }}>\frac{Z_1}{\sqrt{\alpha_1 }}\Big)
=\int_{-\infty}^\infty \P\Big(\frac{Z_2}{\sqrt{\alpha_2 }}>\frac{z}{\sqrt{\alpha_1 }}, \ldots , \frac{Z_m}{\sqrt{\alpha_m }}>\frac{z}{\sqrt{\alpha_1 }}\Big)\frac{1}{\sqrt{2\pi}}e^{-\frac{z^2}{2}}dz\\
&=\int_{-\infty}^\infty \prod_{k=2}^m\P\Big(\frac{Z_k}{\sqrt{\alpha_k }}>\frac{z}{\sqrt{\alpha_1 }}\Big)\frac{1}{\sqrt{2\pi}}e^{-\frac{z^2}{2}}dz
=\int_{-\infty}^\infty \prod_{k=2}^m\Big(1-\Phi\Big(\sqrt{\frac{\alpha_k}{\alpha_1}}\ z\Big)\Big)\frac{1}{\sqrt{2\pi}}e^{-\frac{z^2}{2}}dz\,.
\end{align*}

\begin{remark}
Recall our geometric interpretation of the winning probability from Remark \ref{rem:cones}. In this interpretation, 
Theorem \ref{thm:gama-normal-win} describes the limiting case when the cone \(C_W\) is exited through the face
corresponding to player 1. For large \(n\), the image of the random walk converges to the line \(\{t (n_1,\ldots,n_m):t\in [0,\infty)\}\) (which is the main diagonal of the cuboid \(C_W\cap [0,\infty)^m\)) almost surely. 
The ratios between the axes of the cuboid are given by those of the \(\alpha_i\)s, \(\frac{\alpha_\ell}{\alpha_k}=\frac{n_\ell}{n_k}\) for all \(k,\ell\).
Through which of the faces the walk exits is determined by `residual fluctuations' around the diagonal which are approximately normal.
\end{remark}

Note that by virtue of Theorem \ref{thm:rep-pi1} we may extend the function \(\pi_1\) to real positive arguments. With this we can summarize
the preceding argument as follows:
\begin{theorem}
Let \(\alpha_1,\ldots,\alpha_m>0\). Then 
\[
\lim_{n\to \infty}\pi_1(\alpha_1 n,\ldots, \alpha_m n)=\int_{-\infty}^\infty \prod_{k=2}^m\Big(1-\Phi\Big(\sqrt{\frac{\alpha_k}{\alpha_1}}\ z\Big)\Big)\frac{1}{\sqrt{2\pi}}e^{-\frac{z^2}{2}}dz\,.
\]
\end{theorem}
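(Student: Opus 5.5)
The plan is to combine the gamma representation of Theorem \ref{thm:rep-pi1}\eqref{it:rep-pi1-2} with Proposition \ref{thm:gama-normal-win} and then evaluate the Gaussian limit explicitly.

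\emph{Step 1: extension to real arguments.} Since Theorem \ref{thm:rep-pi1} gives $\pi_1$ as a probability involving independent gamma variables, and gamma distributions make sense for any positive real shape parameter, we take
\[
\pi_1(\alpha_1 n,\ldots,\alpha_m n)=\P\Big(\frac{G_{2,n}}{\alpha_2 n}>\frac{G_{1,n}}{\alpha_1 n},\ldots,\frac{G_{m,n}}{\alpha_m n}>\frac{G_{1,n}}{\alpha_1 n}\Big)
\]
as the definition whenever the $\alpha_k n$ are not integers. Here the $G_{k,n}\sim\operatorname{Gamma}(\alpha_k n,1)$ are independent. For integer arguments this agrees with the game probability, and that identity is exactly Theorem \ref{thm:rep-pi1}\eqref{it:rep-pi1-2}.

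\emph{Step 2: passage to the Gaussian limit.} With this identification the left-hand side is precisely the sequence treated in Proposition \ref{thm:gama-normal-win}. Hence
\[
\lim_{n\to\infty}\pi_1(\alpha_1 n,\ldots,\alpha_m n)=\P\Big(\frac{Z_k}{\sqrt{\alpha_k}}>\frac{Z_1}{\sqrt{\alpha_1}},\ k=2,\ldots,m\Big),
\]
with $Z_1,\ldots,Z_m$ independent standard normal. The only delicate point sits inside that proposition: the event must be a continuity set of the limiting Gaussian law. This holds because its boundary is a finite union of hyperplanes, which have Lebesgue measure zero, so I will simply cite the proposition.

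\emph{Step 3: evaluating the Gaussian probability.} I disintegrate over $Z_1=z$ and use the independence of $Z_2,\ldots,Z_m$. Each conditional factor is
\[
\P\big(Z_k>\sqrt{\alpha_k/\alpha_1}\,z\big)=1-\Phi\big(\sqrt{\alpha_k/\alpha_1}\,z\big).
\]
Integrating the product of these factors against the standard normal density $\frac{1}{\sqrt{2\pi}}e^{-z^2/2}$ yields the stated formula; this is exactly the computation displayed just before the theorem.

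Step 1 is the only conceptual point, namely justifying that the real-argument extension is consistent with the integer case. The remaining steps are direct applications of results already established in the paper.
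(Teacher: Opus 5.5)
Your proposal is correct and follows the paper's argument: extend $\pi_1$ to real arguments via the gamma representation of Theorem~\ref{thm:rep-pi1}, invoke Proposition~\ref{thm:gama-normal-win}, and then disintegrate over $Z_1$ using independence. Your continuity-set justification is more accurate than the paper's blanket claim that all Borel sets are continuity sets of the Gaussian law. You argue that the boundary of the event lies in finitely many hyperplanes, which are null sets for the absolutely continuous limit.
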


\new{For $m=2, 3$, the integral can be explicitly calculated. In the special case $m=2$, we obtain $\lim_{n\to \infty}\pi_1(\alpha_1 n,\alpha_2 n)=\frac{1}{2}$ for all $\alpha_1,\alpha_2>0$.}

\section{Being last in the game}\label{sec:losing-prob}

Now we consider the complementary problem of computing the probability of a given player to reach their goal
after all other players.

We can squeeze out quite a bit more from Lemma \ref{lem:conjugate}, namely the probability of finishing before players \(2,\ldots,m-1\), but after 
player \(m\).
\begin{align*}
&\mathbb{P}(A_2\leq n_2-1,\dotsc,A_{m-1}\leq n_{m-1}-1,A_m\ge n_m)\\
&=\mathbb{P}(A_2\leq n_2-1,\dotsc,A_{m-1}\leq n_{m-1}-1)\\
&\quad -\mathbb{P}(A_2\leq n_2-1,\dotsc,A_{m-1}\leq n_{m-1}-1,A_m\le n_m-1)\\
&\stackrel{(\ast)}{=}\mathbb{P}\left(X_2>\frac{p_2}{p_1},\dotsc,X_{m-1}>\frac{p_{m-1}}{p_1}\right)-\mathbb{P}\left(X_2>\frac{p_2}{p_1},\dotsc,X_m>\frac{p_m}{p_1}\right)\\
&=\mathbb{P}\left(X_2>\frac{p_2}{p_1},\dotsc,X_{m-1}>\frac{p_{m-1}}{p_1},X_m\le \frac{p_m}{p_1}\right)
\end{align*}
However, for this innocent looking calculation we need to know that \((\ast)\) is true, i.e., 
\[
\mathbb{P}(A_2\leq n_2-1,\dotsc,A_{m-1}\leq n_{m-1}-1)=\mathbb{P}\left(X_2>\frac{p_2}{p_1},\dotsc,X_{m-1}>\frac{p_{m-1}}{p_1}\right).
\] 
That this equality holds can be seen by recalling that \((X_2,\ldots,X_{m-1})\sim\operatorname{ID}(n_2,\ldots,n_{m-1};n_1)\) (cf.~Remark \ref{rem:idir}\eqref{it:idir-marg})
and noting that \((A_2,\ldots, A_{m-1})\) has negative multinomial distribution with parameters \(n_1,\frac{p_2}{1-p_m},\ldots,\frac{p_{m-1}}{1-p_m}\), so that 
\begin{align*}
&\mathbb{P}(A_2\leq n_2-1,\dotsc,A_{m-1}\leq n_{m-1}-1)\\
&=\mathbb{P}\Big(X_2>\frac{\frac{p_2}{1-p_m}}{\frac{p_1}{1-p_m}},\ldots,X_{m-1}>\frac{\frac{p_{m-1}}{1-p_m}}{\frac{p_1}{1-p_m}}\Big)
=\mathbb{P}\left(X_2>\frac{p_2}{p_1},\dotsc,X_{m-1}>\frac{p_{m-1}}{p_1}\right).
\end{align*}
\detail{ 
\begin{align*}
&\mathbb{P}(A_2\leq n_2-1,\dotsc,A_{m-1}\leq n_{m-1}-1)\\
&=p_1^{n_1}\sum_{k_2=0}^{n_2-1}\cdots\sum_{k_{m-1}=0}^{n_{m-1}-1}\sum_{k_m=0}^{\infty}\binom{n_1+k_2+\dotsb+k_m-1}{n_1-1, k_2,\dotsc ,k_m}\prod_{\ell=2}^m p_\ell^{k_\ell}\\
&=p_1^{n_1}\sum_{k_2=0}^{n_2-1}\cdots\sum_{k_{m-1}=0}^{n_{m-1}-1}\binom{n_1+k_2+\dotsb+k_{m-1}-1}{n_1-1, k_2,\dotsc ,k_{m-1}}\prod_{\ell=2}^{m-1} p_\ell^{k_\ell}\sum_{k_m=0}^{\infty}\binom{n_1+k_2+\dotsb+k_m-1}{k_m}p_m^{k_m}\\
&=p_1^{n_1}\sum_{k_2=0}^{n_2-1}\cdots\sum_{k_{m-1}=0}^{n_{m-1}-1}\binom{n_1+k_2+\dotsb+k_{m-1}-1}{n_1-1, k_2,\dotsc ,k_{m-1}}\bigg(\prod_{\ell=2}^{m-1} p_\ell^{k_\ell}\bigg)\Big(\frac{1}{1-p_m}\Big)^{n_1+k_2+\dotsb+k_{m-1}}\\
&=\Big(\frac{p_1}{1-p_m}\Big)^{n_1}\sum_{k_2=0}^{n_2-1}\cdots\sum_{k_{m-1}=0}^{n_{m-1}-1}\binom{n_1+k_2+\dotsb+k_{m-1}-1}{n_1-1, k_2,\dotsc ,k_{m-1}}\prod_{\ell=2}^{m-1} \Big(\frac{p_\ell}{1-p_m}\Big)^{k_\ell}\\
&=\mathbb{P}\Big(X_2>\frac{\frac{p_2}{1-p_m}}{\frac{p_1}{1-p_m}},\ldots,X_{m-1}>\frac{\frac{p_{m-1}}{1-p_m}}{\frac{p_1}{1-p_m}}\Big)
=\mathbb{P}\left(X_2>\frac{p_2}{p_1},\dotsc,X_{m-1}>\frac{p_{m-1}}{p_1}\right).
\end{align*}}

Repeating this calculation for \(m-1,\ldots,2\) we get 
\begin{equation}\label{eq:lemma13var}
\mathbb{P}(A_2\leq n_2-1,\dotsc,A_{m}\leq n_{m}-1)=\mathbb{P}\left(X_2>\frac{p_2}{p_1},\dotsc,X_{m}>\frac{p_{m}}{p_1}\right).
\end{equation}

By \eqref{eq:lemma13var} we obtain the following result, which is proven in the same way as Theorem \ref{thm:pi-bij},  with the integration symbols $\int_{\frac{p_\ell}{p_1}}^\infty$ 
changed to $\int_{0}^\frac{p_\ell}{p_1}$.
\begin{theorem}
Let \(\Delta_{m-1}\) denote the \(m-1\) dimensional simplex in \(\R^m\).
Then the function $\tau\colon \Delta_{m-1}\to\Delta_{m-1}, p=(p_1,\dotsc,p_m)\mapsto(\tau_1(p),\dotsc,\tau_m(p))$ is a bijection.
The restriction of  $\pi$ to the relative interior of \(\Delta_{m-1}\) is a diffeomorphism. 
\end{theorem}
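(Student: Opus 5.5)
The plan is to copy the proof of Theorem \ref{thm:pi-bij}, with the lower limits $\frac{p_k}{p_\ell}$ of the inverted Dirichlet integrals replaced by upper limits. The only structural change is a global sign flip in the Jacobian.

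\textbf{Step 1: integral formula for $\tau_\ell$.} Player $\ell$ is last if and only if every other player $k$ has advanced at least $n_k$ times before the $n_\ell$-th advance of player $\ell$, i.e.\ $A_k\ge n_k$ for all $k\neq\ell$ in the negative multinomial picture centred at player $\ell$. I expand the event $\{A_k\ge n_k,\ k\ne\ell\}$ by inclusion--exclusion over the subsets $S\subseteq\{k\neq \ell\}$ of constraints $A_k\le n_k-1$. Each term is evaluated by \eqref{eq:lemma13var}, applied to the sub-vector indexed by $S$ and justified by the marginal property Remark \ref{rem:idir}\eqref{it:idir-marg}. Recombining the terms gives
\[
\tau_\ell(p)=\mathbb{P}\Big(X_k\le \tfrac{p_k}{p_\ell},\ k\neq \ell\Big),\qquad (X_k)_{k\neq\ell}\sim\operatorname{ID}\big((n_k)_{k\ne\ell};n_\ell\big),
\]
which is the $\pi_\ell$ integral with every $\int_{p_k/p_\ell}^\infty$ replaced by $\int_0^{p_k/p_\ell}$.

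\textbf{Step 2: Jacobian in the coordinates $x=\Psi(p)$.} With $\Psi$ as before and $x_1:=1$, set $\tilde\tau=\tau\circ\Psi^{-1}$, so that $\tilde\tau_\ell(x)=\mathbb{P}(X_k\le x_k/x_\ell,\ k\ne\ell)$. Differentiating the upper limits gives the same expressions as for $\tilde\pi$, with two changes: the sign is reversed, and the factors $I(j,\ell)$ are integrals over $[0,x_i/x_\ell]$ instead of $[x_i/x_\ell,\infty)$. Hence $\partial_{x_\ell}\tilde\tau_\ell<0$ and $\partial_{x_k}\tilde\tau_\ell>0$ for $k\ne\ell$ in the interior, so $-J\tilde\tau(x)$ is a Z-matrix.

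Next I use that $\tilde\tau_\ell$ depends on $x$ only through the ratios $x_k/x_\ell$ ($k\ge2$) and $1/x_\ell$. The Euler-type identity then gives
\[
\big(J\tilde\tau(x)\,x^T\big)_\ell=-\frac{1}{x_\ell}\cdot\frac{(1/x_\ell)^{n_\ell-1}}{(1+1/x_\ell)^{n_1+n_\ell}}\,I(1,\ell)<0 .
\]
So $-J\tilde\tau(x)x^T$ has positive entries. By the same M-matrix criterion (\cite[Chapter 6, criterion I$_{27}$]{BerPlem87}), $-J\tilde\tau$, and hence $J\tilde\tau$, is invertible on $(0,\infty)^{m-1}$. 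Because $\Psi$ is a diffeomorphism, $\Psi\circ\tilde\tau$ has invertible Jacobian as well.

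\textbf{Step 3: global inversion and the boundary (main obstacle).} I apply Hadamard's theorem \cite[Theorem 2.2]{ruzhansky2015} to $\Psi\circ\tilde\tau$, exactly as in Theorem \ref{thm:pi-bij}. This requires checking that $\tau$ maps the relative interior into itself and is proper, i.e.\ $\tau(p)\to\partial\Delta_{m-1}$ whenever $p\to\partial\Delta_{m-1}$. Interior to interior is clear, since every player has positive probability of being last.

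The boundary behaviour is where I expect difficulty, because it differs from the winning case. If $p_\ell\to0$ with the other $p_k$ fixed, then $\tau_\ell\to1$ rather than $0$. So a face of the simplex is sent toward a vertex, and the boundary correspondence is not the face-preserving one used for $\pi$.

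For the diffeomorphism statement on the interior, properness is all that is needed, and I would verify it from the integral formula. As some $p_j\to0$, some threshold $p_j/p_\ell\to0$ or $p_\ell/p_j\to\infty$, and this drives the corresponding $\tau$-coordinates to $0$ or $1$. Hence $\tau(p)$ leaves every compact subset of the interior, and Hadamard's theorem yields the diffeomorphism of the relative interior.

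For bijectivity on the closed simplex, I would first fix the convention for degenerate $p$, for instance by defining $\tau$ on faces by continuity along the corresponding lower-dimensional game. I would then check, face by face, whether the statement as written needs this convention or a restriction to the interior. I suspect the latter, and would state the result accordingly.
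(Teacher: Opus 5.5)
Your Steps 1 and 2 and the interior part of Step 3 follow the argument the paper intends. Its proof only says to repeat the proof of Theorem \ref{thm:pi-bij} with $\int_{p_\ell/p_1}^\infty$ replaced by $\int_0^{p_\ell/p_1}$. The sign flip makes $-J\tilde\tau$ a Z-matrix with $-J\tilde\tau(x)x^T$ positive, hence a nonsingular M-matrix, and Hadamard's theorem finishes. Your properness check is the right substitute for the paper's ``boundary into boundary'' hypothesis. Stated cleanly, and without holding the other coordinates fixed: if interior points $p^{(n)}\to p^*\in\partial\Delta_{m-1}$, choose $j$ with $p^*_j=0$ and $k$ with $p^*_k>0$. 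Then $\tau_k(p^{(n)})\le\P\big(X_j\le p^{(n)}_j/p^{(n)}_k\big)\to0$, so $\tau(p^{(n)})$ leaves every compact subset of the interior.

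Your suspicion about the closed simplex is correct, and you should state it as a fact rather than a hedge. For $m\ge3$ the first sentence of the statement is false, and the paper's one-line proof overlooks this. Take $m=3$ and $p=(0,t,1-t)$ with $0<t<1$. Approaching $p$ from the interior in any manner gives $\tau\to(1,0,0)$, for every $t$. So the continuous extension collapses the whole open face $\{p_1=0\}$ onto the vertex $e_1$, and injectivity fails. Under the literal Definition \ref{defi:pitau}, player 1 never reaches the goal, so $\tau(p)=(0,0,0)\notin\Delta_2$. At the vertices there is not even a continuous extension: along $(\varepsilon a,\varepsilon b,1-\varepsilon(a+b))$ one gets $\tau_1\to\P(X_2\le b/a)$, which depends on $a/b$.

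What breaks is the last step of the proof of Theorem \ref{thm:pi-bij}. It uses $p_\ell=0\Rightarrow\pi_\ell=0$, together with the marginal property, to reduce to the same game on the face. For $\tau$ one has instead $p_\ell=0\Rightarrow\tau_\ell=1$. So the provable statement is the one you actually prove: $\tau$ is a diffeomorphism of the relative interior of $\Delta_{m-1}$ onto itself (the ``$\pi$'' in the second sentence is a typo for $\tau$). Bijectivity on the closed simplex holds only for $m=2$, with $\tau$ extended by continuity, where it simply swaps the two vertices.
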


Another consequence of \eqref{eq:lemma13var} is the following counterpart of Theorem \ref{thm:rep-pi1}.

\begin{proposition}\label{thm:rep-tau1}
Let \(m\ge 2\).  For all \(n_1,\ldots,n_m\in \N_{\ge 1}\) consider 
\[
\tau_1(n_1,\ldots,n_m)=\mathbb{P}(A_2\geq n_2,\dotsc,A_m\geq n_m) 
\]
where $(A_2,\dotsc,A_m)$ have a negative multinomial distribution with parameters $(n_1;p_2,\dotsc,p_m)$ 
and \(p_\ell=\frac{n_\ell}{n_1+\ldots,n_m}\) for \(\ell\in\{2,\ldots,m\}\).

Then we have the following representations:
\begin{enumerate}
\item \label{it:rep-tau1-1}
\(\tau_1(n_1,\ldots,n_m)=\mathbb{P}\left(X_2\le\frac{n_2}{n_1},\dotsc,X_m\le\frac{n_m}{n_1}\right)\)\\
where $(X_2,\dotsc,X_m)\sim \operatorname{ID}(n_2,\dotsc,n_m;n_1)$.
\item \label{it:rep-tau1-2} \(\tau_1(n_1,\ldots,n_m)=\mathbb{P}\left(\frac{G_2}{n_2} \le \frac{G_1}{n_1},\dotsc,\frac{G_m}{n_m} \le \frac{G_1}{n_1}\right)\)\\
where \(G_1,\ldots,G_m\) are independent random variables with \(G_j\sim \operatorname{Gamma}(n_j,1)\), \(j=1,\ldots,m\).
\end{enumerate}
\end{proposition}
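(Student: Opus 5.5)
The plan is to derive both representations from the identity \eqref{eq:lemma13var} by inclusion--exclusion, and then pass to the gamma form via Lemma \ref{lem:gammarep}.

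First, fix the event \(\{A_k\ge n_k \text{ for all } 2\le k\le m\}\). By the definition of the negative multinomial vector (counts of advances of players \(2,\ldots,m\) before the \(n_1\)-th advance of player 1), this event says exactly that every other player has reached their goal before player 1 reaches \(n_1\). This matches Definition \ref{defi:pitau}(2), so \(\tau_1=\mathbb{P}(A_2\ge n_2,\dotsc,A_m\ge n_m)\); this step only checks the stated definition. Next, apply inclusion--exclusion to the complement events \(\{A_k\le n_k-1\}\):
\[
\mathbb{P}(A_k\ge n_k,\ 2\le k\le m)=\sum_{S\subseteq\{2,\ldots,m\}}(-1)^{|S|}\,\mathbb{P}(A_k\le n_k-1,\ k\in S),
\]
with the empty set contributing \(1\).

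For each \(S\), the subvector \((A_k)_{k\in S}\) is again negative multinomial with parameters \(n_1\) and \(p_k/(p_1+\sum_{j\in S}p_j)\). This is the marginalization computation carried out just before \eqref{eq:lemma13var}, iterated over the removed indices. Lemma \ref{lem:conjugate}, applied in dimension \(|S|+1\), then gives \(\mathbb{P}(X_k>p_k/p_1,\ k\in S)\) with \((X_k)_{k\in S}\sim\operatorname{ID}((n_k)_{k\in S};n_1)\); the renormalization cancels in the ratios \(p_k/p_1\). By Remark \ref{rem:idir}\eqref{it:idir-marg}, this law is the marginal of the full vector \((X_2,\dotsc,X_m)\). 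Running the same inclusion--exclusion backwards on the \(X\)'s turns the signed sum into \(\mathbb{P}(X_k\le p_k/p_1,\ 2\le k\le m)\). Inserting \(p_k/p_1=n_k/n_1\) proves (1). Part (2) then follows as in Theorem \ref{thm:rep-pi1}: by Lemma \ref{lem:gammarep}, \(X_k\stackrel{d}{=}G_k/G_1\) jointly, and \(G_k/G_1\le n_k/n_1\) is equivalent to \(G_k/n_k\le G_1/n_1\).

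The main obstacle is justifying that every marginal of the negative multinomial matches the corresponding inverted Dirichlet marginal with the same thresholds. The text before \eqref{eq:lemma13var} does this only for removing the last coordinate, so the argument has to be iterated over arbitrary subsets \(S\). I would handle it by symmetry of coordinates, or by a short induction on \(m-1-|S|\) using the detailed geometric-series computation shown there. Boundary terms are harmless because the distributions are continuous, so strict and non-strict inequalities agree.
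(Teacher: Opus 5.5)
Your proof is correct and takes the same route as the paper. The paper flips the events $\{A_k\le n_k-1\}$ into $\{A_k\ge n_k\}$ one coordinate at a time: it identifies the negative multinomial marginals (with renormalized parameters $p_k/(p_1+\sum_{j\in S}p_j)$) with the corresponding inverted Dirichlet marginals via Lemma \ref{lem:conjugate} and Remark \ref{rem:idir}, and then gets (2) from (1) using Lemma \ref{lem:gammarep}. Your explicit inclusion--exclusion over all subsets $S$ is that iteration written out in full, and it makes precise the ``repeating this calculation'' step that the paper only sketches.
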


\begin{remark}
Of course the inequality signs on the respective right hand sides of Theorem \ref{thm:rep-tau1} \eqref{it:rep-tau1-1} and \eqref{it:rep-tau1-2}
can be replaced by strict inequality signs, since 
the inverted Dirichlet distribution is a continuous distribution with a probability density.
\end{remark}

\begin{remark}
With the same arguments as the ones leading to the conclusion of Theorem \ref{thm:rep-tau1} one can calculate the probability that player 1 finishes after players \(\ell_1,\ldots,\ell_{M-1}\), but before
players  \(\ell_{M},\ldots,\ell_m\) and {\em a fortiori}, the probability of winning the \(M\)-th place by summing over all
subsets of \(\{2,\ldots,m\}\) of cardinality \(M-1\).
\end{remark}

Invoking Theorem \ref{thm:rep-tau1}\eqref{it:rep-tau1-2} and repeating the calculations leading to equation \eqref{eq:Pdisint}, we get 
a corresponding equation for the probability of player 1 finishing last:
\begin{align}
\label{eq:Pdisint2}
\mathbb{P}(A_2\geq n_2,\dotsc,A_m\geq n_m)
&=\int_{0}^\infty \prod_{k=2}^m\mathbb{P}\left(\frac{G_k}{n_k}\le x\right)n_1^{n_1}\frac{x^{n_1-1}e^{-n_1 x}}{\Gamma(n_1)}dx,
\end{align}
where \(G_k\sim \operatorname{Gamma}(n_k,1)\) for all \(k\in \{2,\ldots,m\}\).
In fact, \eqref{eq:Pdisint2} was used to compute the probabilities in Table \ref{tbl:dice-prob-last}. 

\medskip

\noindent\textbf{Tackling Question \ref{question2}.} Imitating the calculations from Section \ref{sec:winning} one can proceed further, however, the analog of  Lemma \ref{th:inc-reg-beta-dec}
which one would need to show that the losing probability is increasing in \(n_m\) is false in general. In fact, it turns out that 
in general the answer to Question \ref{question2} is `No'.

\begin{example}  
We provide a counterexample for monotonicity of \(\tau_1\) with respect to \(n_3\): 
\[
\tau_1(1,1,1)= 0.333333> 0.319444 = \tau_1(1,1,2).
\]
So, in general, \(\tau_1\) is not increasing in the second to \(m\)-th argument. 

For \((n_1,n_2,n_3)=(1,2,3)\) or \((n_1,n_2,n_3)=(3,4,5)\) we get \(\tau_1<\tau_2<\tau_3\). 
For \((n_1,n_2,n_3)=(4,5,6)\) we get \(\tau_3<\tau_2<\tau_1\). 

For all examples with \(m=3\) we observed that if \(n_1<n_2<n_3\), then the probability \(\tau_2\) for player 2 being last, is never maximal among the 3 players. However, it can be minimal, for example, if \((n_1,n_2,n_3)=(3,4,6)\). 

For 4 players, choosing \((n_1,n_2,n_3,n_4)=(1, 1, 2, 4)\), player 3 has the highest probability of being last,
although they have neither the largest nor the smallest advancement probability.

In conclusion, the behavior of the losing probabilities is hard to summarize for the general game. 
The biblical proverb `So the last will be first, and the first will be last' (Matthew 20:16) is true (in the probabilistic sense) for the original 11 player-game described in the introduction, but is false for the general game.
\end{example}

We turn to asymptotics. Proposition \ref{thm:rep-tau1} can be used in the same way as in Section \ref{sec:asymptotics} to compute the limiting 
probability of player one finishing last when one or several of the \(n_k\) tend to \(\infty\). We get the following results:

\begin{theorem}\label{thm:tau-asympotics} Let \(\tau_1\) be as in Proposition \ref {thm:rep-tau1}. Then
\begin{enumerate}
\item \(
\lim_{n_1\to \infty}\tau_1(n_1,\ldots,n_m)=\prod_{k=2}^m\mathbb{P}(G_k \le  n_k)\,.
\)
\item With \(X_2,\ldots,X_m\) as in Proposition \ref {thm:rep-tau1}\eqref{it:rep-tau1-1},  
\begin{align*}
&\lim_{n_m\to \infty}\tau_1(n_1,n_2,\ldots,n_m)\\
&=\int_0^{\frac{n_2}{n_1}}\cdots\int_0^{\frac{n_{m-1}}{n_1}}\int_{n_1(1+\sum_{k=2}^{m-1}s_k)}^\infty f_\Gamma(s_m)ds_{m}\ \varphi_{<m}(s_2,\ldots,s_{m-1})ds_{m-1}\dotsc ds_2,
\end{align*}
where 
\(\varphi_{<m}\) is the marginal density of \(X_2,\ldots,X_{m-1}\) and \(f_\Gamma\) is a \(\operatorname{Gamma}\Big(\sum_{k=1}^{m-1}n_k,1\Big)\) density.
\item\label{item:tau-alphas} For given \(\alpha_1,\ldots,\alpha_m>0\)  
\[
\lim_{n\to \infty}\tau_1(\alpha_1 n,\ldots, \alpha_m n)=\int_{-\infty}^\infty \prod_{k=2}^m \Phi\Big(\sqrt{\frac{\alpha_k}{\alpha_1}}\ z\Big)\frac{1}{\sqrt{2\pi}}e^{-\frac{z^2}{2}}dz\,.
\]
\end{enumerate}
\end{theorem}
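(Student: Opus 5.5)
The plan is to mirror the three asymptotic arguments of Section~\ref{sec:asymptotics}, replacing the representations of $\pi_1$ from Theorem~\ref{thm:rep-pi1} by those of $\tau_1$ from Proposition~\ref{thm:rep-tau1}. The only change is that strict ``$>$'' events become ``$\le$'' events. Because all limit laws involved are continuous, the boundary sets carry no mass, so the continuous mapping and Slutsky arguments go through unchanged.

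\textbf{Item (1).} I will use Proposition~\ref{thm:rep-tau1}\eqref{it:rep-tau1-2}:
\[
\tau_1=\P\Big(\tfrac{G_2}{n_2}\le \tfrac{G_1}{n_1},\dots,\tfrac{G_m}{n_m}\le\tfrac{G_1}{n_1}\Big).
\]
By Lemma~\ref{thm:gamma-conv}, $G_1/n_1\to1$ in probability. Slutsky's theorem then gives convergence in distribution of the vector $(G_k/n_k-G_1/n_1)_{k\ge2}$ to $(G_k/n_k-1)_{k\ge 2}$. This limit has a density, so the orthant $(-\infty,0]^{m-1}$ is a continuity set. Independence of $G_2,\dots,G_m$ then yields $\prod_{k=2}^m\P(G_k\le n_k)$.

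\textbf{Item (2).} I will start from Proposition~\ref{thm:rep-tau1}\eqref{it:rep-tau1-1} and write $\tau_1$ as an integral of the inverted Dirichlet density over $\prod_{k=2}^{m}[0,n_k/n_1]$. As in the proof of Theorem~\ref{thm:nm-infty}, I substitute $s_m\mapsto s_m/(1+s_2+\dots+s_{m-1})$ and split the density into two factors:
\begin{itemize}
\item the $\operatorname{ID}(n_2,\dots,n_{m-1};n_1)$ marginal density $\varphi_{<m}$, by Remark~\ref{rem:idir}\eqref{it:idir-marg};
\item a beta-prime density with parameters $(n_m;\sum_{k<m}n_k)$.
\end{itemize}
By Lemma~\ref{lem:gammarep}, the inner integral equals
\[
\P\Big(\tfrac{G_m}{n_m}\le \tfrac{G_\Sigma}{n_1(1+s_2+\dots+s_{m-1})}\Big),
\]
where $G_\Sigma\sim\operatorname{Gamma}(\sum_{k<m}n_k,1)$. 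As $n_m\to\infty$, Lemma~\ref{thm:gamma-conv} sends this pointwise in $s$ to $\P(G_\Sigma\ge n_1(1+\sum s_k))$, using that $G_\Sigma$ has no atoms. The inner integrand is a probability, hence bounded by $1$, and $\varphi_{<m}$ does not depend on $n_m$. Dominated convergence therefore lets me pass the limit inside the outer integral, which gives the stated formula.

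\textbf{Item (3).} I will follow Proposition~\ref{thm:gama-normal-win}. With $H_{k,n}=\sqrt{\alpha_k n}\,(G_{k,n}/(\alpha_k n)-1)$, the event becomes
\[
\Big\{H_{k,n}\le \sqrt{\alpha_k/\alpha_1}\,H_{1,n}\ \text{ for all } k\ge2\Big\}.
\]
Proposition~\ref{eq:gamma-clt} and independence give $(H_{k,n})_k\to(Z_k)_k$ in distribution. The event is a closed polyhedral set whose boundary has Gaussian measure zero, so its probability converges to
\[
\P\big(Z_k/\sqrt{\alpha_k}\le Z_1/\sqrt{\alpha_1}\ \text{ for all } k\big).
\]
Disintegrating over $Z_1=z$ turns this into $\int\prod_{k\ge2}\Phi(\sqrt{\alpha_k/\alpha_1}\,z)\,\frac{1}{\sqrt{2\pi}}e^{-z^2/2}\,dz$.

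\textbf{Main obstacle.} The delicate step is the interchange of limit and integral in item (2). The paper glosses over this point in Theorem~\ref{thm:nm-infty}, so I will state dominated convergence explicitly: the inner integrand is bounded by $1$ and $\varphi_{<m}$ is integrable and independent of $n_m$. Everything else is a routine transcription of the winning-probability proofs.
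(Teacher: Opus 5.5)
Your proposal is correct and is essentially the paper's own argument: the paper gives no separate proof, only the remark that Proposition~\ref{thm:rep-tau1} is used ``in the same way as in Section~\ref{sec:asymptotics}'', and you carry out exactly that transcription of Theorem~\ref{thm:n1-infty}, Theorem~\ref{thm:nm-infty} and Proposition~\ref{thm:gama-normal-win} with the inequalities reversed. Your explicit dominated-convergence step in (2) and your remark in (3) that only the boundary of the closed polyhedral event needs Gaussian measure zero both tighten details the paper leaves implicit. The latter replaces the paper's incorrect claim that every Borel set is a continuity set.
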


\medskip

In view of Theorem \ref{thm:tau-asympotics} \eqref{item:tau-alphas}
above, the asymptotic probability for player 1 to be last is $\E\Big[\prod_{k=2}^m\Phi\Big(\sqrt{\frac{\alpha_k}{\alpha_1}}Z\Big)\Big]$ for a standard normal variable $Z$.
Differentiating with respect to some \(\alpha_\ell\), where $\ell\in\{2,\dotsc,m\}$, yields
\begin{align*}
\frac{1}{2\sqrt{\alpha_1\alpha_\ell}}\E\Big[\prod_{\substack{k=2\\k\neq \ell}}^m\Phi\Big(\sqrt{\frac{\alpha_k}{\alpha_1}}Z\Big)\phi\Big(\sqrt{\frac{\alpha_\ell}{\alpha_1}}Z\Big)Z\Big],
\end{align*}
where $\phi$ denotes the standard normal density. By the symmetry of $\phi$, the above expectation is
\begin{align*}
\E\bigg[1_{\{Z>0\}}\bigg(\prod_{\substack{k=2\\k\neq \ell}}^m\Phi\Big(\sqrt{\frac{\alpha_k}{\alpha_1}}Z\Big)-\prod_{\substack{k=2\\k\neq \ell}}^m\Phi\Big(-\sqrt{\frac{\alpha_k}{\alpha_1}}Z\Big)\bigg)\phi\Big(\sqrt{\frac{\alpha_\ell}{\alpha_1}}Z\Big)Z\bigg],
\end{align*}
and since for $w>0$, we have $\Phi(w)>\frac{1}{2}>\Phi(-w)=1-\Phi(w)$, the above integrand is strictly greater than zero  except for the case $m=2$, where it vanishes.
Hence, $\lim_{n\to \infty}\tau_1(\alpha_1 n,\ldots, \alpha_m n)$ strictly increases in each $\alpha_\ell$, $2\leq \ell\leq m$, with the exception of $m=2$, where it is constant at $\frac{1}{2}$. In other words, the limiting behavior of the probabilities of being last is the same as of the winning probabilities. Therefore, taking an asymptotic viewpoint, we can answer Question 2 with `Yes' again.

\medskip

\noindent\textbf{Acknowledgements:} We thank Evita Lerchenberger and Christoph Oberbucher (both FDZ University of Graz) for bringing the problem to our attention.

\end{document}